\newtheorem{theorem}{Theorem}
\newtheorem{proposition}{Proposition}
\newtheorem{remark}{Remark}%
\newtheorem{lemma}{Lemma}
\begin{document}
	
	\title[Article Title]{Numerical approximation  of
		the boundary control for the wave equation with a spectral collocation method}

	\author*[1]{\fnm{Somia} \sur{Boumimez}}\email{soumia.boumimez@gmail.com}
	\author[1]{\fnm{Carlos} \sur{Castro}}\email{carlos.castro@upm.es}
	\affil*[1]{M2ASAI Universidad Polit\'ecnica de Madrid, Departamento de Matem\'atica e Inform\'atica, ETSI Caminos, Canales y Puertos, 28040 Madrid, Spain}
	
	\abstract{	We propose a spectral collocation method to approximate the exact boundary control of the wave equation in a square domain. The idea is to introduce a suitable approximate control problem that we solve in the finite-dimensional space of polynomials of degree $N\in \mathbb{N}$ in space. We prove that we can choose a sequence of controls $f^N$ associated to the approximate control problem in such a way that they converge, as $N\to \infty$, to a control of the continuous wave equation. Unlike other numerical approximations tried in the literature, this one does not require regularization techniques and can be easily adapted to other equations and systems where the controllability of the continuous model is known.  The method is illustrated with several examples in  1-d  and  2-d  in a square domain. We also give numerical evidences of the highly accurate approximation inherent to spectral methods.}
	\keywords{Numerical approximation, Controllability, Spectral collocation method, Wave equation.}
	\maketitle
	
	\section{Introduction}\label{sec1}

	Consider the wave equation on a square domain $\Omega=(-1,1)^d \subset \mathbb{R}^d$ ($d=1,2$) with a control $ f $ acting on one part of the boundary $\Gamma \subset \partial \Omega$ for some time $t\in (0,T)$:
	\begin{align}\label{eq1}
		\begin{cases}
			u_{tt}-\Delta u=0& \mbox{in} \ Q=(0,T)\times\Omega \\
			u=f & \mbox{on} \ (0,T)  \times \Gamma \\
			u=0 & \mbox{on} \ (0,T)  \times \partial \Omega \backslash \Gamma \\
			u(0,x)=u^{0}(x),\; u_{t}(0,x)=u^{1}(x)\hspace*{0.5cm}& \mbox{in} \ \Omega .
		\end{cases}
	\end{align}
	
	Given any $ f\in L^{2}((0,T)\times \Gamma ) $ and some initial data $(u^{0}, u^{1}) \in L^{2}(\Omega)\times H^{-1}(\Omega)  $,	 problem (\ref{eq1}) has a unique solution $ (u, u_{t}) \in C([0, T] , L^{2}(\Omega)\times H^{-1}(\Omega))$. 
	It is also well-known that, if $ T>T_0 $ with $T_0$ sufficiently large and $\Gamma\subset\partial \Omega$ satisfies some geometric conditions (see \citep{bib1}, \citep{bib15}), for  any initial data $ (u^0,u^1)\in L^2(\Omega)\times H^{-1}(\Omega)  $ there exists a control  $ f\in L^{2}((0,T)\times \Gamma ) $  such that the solution of (\ref{eq1}) $ u\in C([0,T];H_0^2(\Omega))\cap C^1([0,T];L^2(\Omega))) $ can be driven to any final target. We assume without loss of generality that this final target is the equilibrium, i.e.
	\begin{align}\label{eq3}
		\begin{split}
			u(T,x)=u_{t}(T,x)=0,\  x\in \Omega.
		\end{split}
	\end{align}	
	
	In particular this is true in dimension $d=1$ when $\Gamma$ is one extreme and $T_0=4$, and in dimension $d=2$ when $\Gamma$ is the union of two consecutive sides and $T_0=4\sqrt{2}$ (see  \cite{bib15}). It is important to note that the control, when it exists, is not unique in general. Among the set of controls, a natural choice is to consider the minimal $L^2-$norm control which is usually unique.

	In this work we focus on the numerical approximation of these boundary controls $f$. This problem has been extensively studied in the last decades with different numerical methods. In particular, it is well known that a discretization of system \eqref{eq1} with finite elements or finite differences schemes reduces the problem to a finite dimensional control problem which is not  uniformly controllable with respect to the discretization parameter. This means that the discrete control problem do not provide a bounded sequence of controls and therefore it is not possible to use this strategy to approximate the continuous control. This was first observed in \cite{bib11} where the authors considered a finite dimensional version of the Hilbert Uniqueness Method (HUM) introduced in \cite{bib15}. Since then, several cures have been proposed to recover convergence approximations of the controls as bigrid algorithms, Tychonoﬀ regularization, filtering, mixed finite elements, etc (see for instance ,\cite{bib6},\cite{bib7},\cite{bib10}, \cite{bib12} and the review paper \cite{bib20}). We also mention more recent approaches where controls are obtained by minimizing a cost function that penalizes both the control and the state \cite{bib8} or those based on a space-time formulation that does not require regularizations \cite{bib4}. 
	
	Here we propose a new numerical approach based on the spectral collocation method.	 For the background and details on this method as well as on other classes of spectral methods (Galerkin, collocation,...) we refer the reader to Gottlieb and Orszag \cite{bib13}, Canuto et al. \cite{bib5}, Bernardi and Maday \cite{bib2}.
	These methods have been extensively used in the past 30 years especially for the numerical
	simulation of fluid dynamical problems (e.g.\cite{bib5} ). According to this approach, the numerical
	solution is regarded as a smooth global polynomial of degree $N$ (typically quite large), and
	satisfies the equilibrium equations point-wise at a family of collocation points that are the
	nodes of a high precision Gauss-Lobatto integration formula. This process can be regarded as
	a generalized Galerkin method: the distinguishing feature with respect to the conventional
	finite element method is that trial (as well as test functions are global (rather than piecewise)
	polynomials of high degree. 
	Two important consequences are derived from this fact. The first is that the spectral method is
	potentially extremely accurate. Indeed, for problems with smooth data, the order of convergence of the numerical solution is much higher than that achievable by finite element
	approximations. To some extent this feature still holds even for problems with low
	smoothness solution, such as those arising in fracture mechanics, or whenever loads are
	concentrated on a small part of the boundary. The second consequence of the global character of the test functions is that the spectral matrixes are severely ill-conditioned and preconditioning techniques must be implemented for large scale problems. 
	
	The use of spectral methods to approximate the control of the wave equation has been previously investigated  by Boulmezaoud and Urquiza \cite{bib3} where, instead of collocation, a Galerkin spectral method is considered. The proposed approximation is not uniformly controllable. However, a bounded sequence of controls is obtained when trying to control the projection of the solution in a suitable low frequencies space, similar as the result obtained for finite differences in \cite{bib14}. From a practical point of view this is not satisfactory since it requires to know an accurate representation of the eigenfunctions associated to the discrete problem, something  which is not available in general. 
	
	The novelty here is that we are able to prove the uniform controllability, and therefore a convergent sequence of controls as $N\to \infty$, by adding an extra discrete boundary control that vanishes as $N\to \infty$. This provides an accurate approximation of the continuous control. The result relies on two key aspects: a uniform observability inequality for the associated discrete adjoint system and a detailed spectral analysis of the discrete low frequencies. The first property allows us to obtain the uniform boundedness of discrete controls while the second one is used to obtain the convergence of the discrete control to the continuous one. 
	
	The method we present here to obtain the uniform observability inequality is new and considers, instead of the discrete collocation system, the equivalent  continuous error equation associated to the polynomial approximation (see \cite{bib13}). This error equation is the same wave equation but with a nonhomogeneous second hand term, known as the error term. Therefore, the observability inequality can be derived using the same techniques as in the continuous model and we only have to estimate this extra error term. This is an important advantage of the method since it can be easily extended to more general equations (elasticity, fluid dynamics, etc.) and  higher dimensions, as long as we consider rectangular domains. 
	
	The second important advantage of the method is in the convergence rate of the approximation. Here we only prove that convergence holds but the numerical experiments illustrate that one recovers the high accuracy expected by a spectral method, even when nonsmooth data is considered. 
	
	To clarify the exposition we present detailed proofs in the one-dimensional case and the main results for the two dimensional one. As we mentioned before, the proofs can be easily adapted by separation of variables. 
	
	The rest of the paper is divided in four more sections. The second section is devoted to state and prove tha main results for the 1-d wave equation. The third section states the main results for the 2-d case. In section 4  we present some  numerical examples in $ 1$-d and in $ 2$-d  in a
	square domain. Finally, the Appendix contains the main spectral results required for the analysis of the convergence.

	\section{The 1-d wave equation}\label{sec2}
	In this section we focus on the 1-d wave equation. Here $ \Omega=(-1,1) $ and we assume that the control $ f  $ acts at the right extreme $ x = 1 $. System \eqref{eq1} reads, 
	\begin{align}\label{eq1-d}
		\begin{cases}
			u_{tt}-u_{xx}=0&\mbox{in} \ (t,x)\in \ (0,T)\times\Omega\\
			u(t,1)=f(t)& \mbox{in} \ t\in \ (0,T)\\
			u(t,-1)=0& \mbox{in} \ t\in \ (0,T)\\
			u(0,x)=u^{0},\quad u_{t}(0,x)=u^{1}\hspace*{0.5cm}&\mbox{in}\ x\in \ \Omega,
		\end{cases}
	\end{align}
	where $ (u^0,u^1)\in L^2(\Omega)\times H^{-1}(\Omega)  $ is the given  initial data.
	
	Assume that $T>4$. We are interested in approximating one of the controls $ f\in L^{2}(0,T) $ for which \eqref{eq3} is satisfied. More precisely, we follow HUM (see \cite{bib15}) and approximate the control that has minimal $L^2-$norm with respect to a suitable weighted norm, with a smooth weight $\eta(t)$ that is compactly supported bump function in $t\in(0,T)$. This ensures that the control itself is compactly supported and avoids possible singularities at times $t=0,T$. In order to find a numerical approximation of this control $ f $ in \eqref{eq1-d} we proceed as follow: first we introduce a discrete version of the control problem \eqref{eq1-d}, depending on a discrete parameter $ N\longrightarrow \infty $. Then, we prove that this system is controllable for all $ N $ with three different controls, $f^{N},g^{N}_{R},g^{N}_{L} \in L^{2}(0,T), $ that we can choose in such way that \[f^{N} \underset{N\to\infty}{\longrightarrow} f,\ g_{R}^{N}\underset{N\to\infty}{\longrightarrow} 0,\ g_{L}^{N}\underset{N\to\infty}{\longrightarrow} 0,\]
	where $ f $ is a control of \eqref{eq1-d}. Therefore $f^N$ is a numerical approximation of a continuous control $f$, while  $g^{N}_{R},g^{N}_{L}$ can be understood as artificial controls which are only necessary to obtain $f^N$.
	
	\subsection{Approximation by the spectral collocation method }
	In this section we introduce some notation and state the main results of the paper. 	
	
	Let $ N $ be a  natural number and consider $ C=\{x_{i},\ 0\le i\le N\} $ the Legendre-Gauss-Lobatto (LGL) nodes in $ \Omega $ that are the roots of 	
	\[(1-x^{2})\dfrac{d}{dx}L^N(x),\] 
	where $ L^k(t ) $ is the k-th Legendre polynomial in $ (-1, 1)  $ (e.g. \cite{bib17}).
	We divide $ C = C^{\Omega}\cup C^{Di}$ into interior and boundary nodes,
	i.e.
	\begin{align*}
		&C^{\Omega}=C\cap \Omega =\{x_{i},i\in I_{\Omega}\}\\
		&C^{Di}=C\cap \{-1,1\}=\{x_{i},i\in I_{Di}\},
	\end{align*}
	where $I_{\Omega},\; I_{Di}$ are the sets of indexes corresponding to the interior and boundary collocation nodes respectively, and we denote $ I=I_{\Omega}\cup I_{Di} $.  
	
	Let $ \mathbb{P}_{N}(\Omega) $ be the space of continuous functions in $ \bar{\Omega} $ which are polynomials of degree
	less than or equal to $ N $ and let $ \mathbb{P}_{N}^{Di}(\Omega)  $ be the subspace of $ \mathbb{P}_{N}(\Omega) $ of those functions vanishing on $ x=\{-1,1\}$. We define the following
	discrete inner product that approximates that of $ L^{2}(\Omega) $: 
	\begin{align}\label{inner1}
		(w,z)_{N}=\sum_{i\in I}(wz)(t,x_{i})\omega_{i},\ 0\le t\le T.
	\end{align}
	
	Here $ \omega_{i} $ is the discrete weight associated with the one-dimensional LGL quadrature formula (e.g.\cite{bib5}, Chapter 2]). Owing to the exactness of the integration LGL formula, we have
	\begin{align}\label{intn}
		(w,z)_{N}=\int_{\Omega} wz\ dx\ for\ all\ w,z\ such \ that \ wz\in\mathbb{P}_{2N-1}(\Omega).
	\end{align}
	
	The symbol $ \left\|\cdot \right\|_{N}$ denotes the discrete norm which is defined as $\left\|z \right\|_{N}^{2}=(z,z)_{N}$.
	
	We recall that the discrete norm $ \left\|\cdot\right\| _{N}  $ is uniformly equivalent to the $L^2-$norm $ \left|\cdot\right|_{L^{2}}  $ in $\mathbb{P}_{N}(\Omega)$ 
	(\cite{bib5}, Chapter 9). In other words, there exist two positive constants $ C_{1}=1 $, and $ C _{2}=2+\dfrac{1}{N}$, such that
	\begin{align}\label{eq10}
		C_{1}\left|p\right|^{2}_{L^{2}}\le\left\|p\right\|^{2}_{N}\le C _{2}\left|p\right|^{2}_{L^{2}}, \ \forall p\in \mathbb{P}_{N}(\Omega).	
	\end{align}
	We denote by $ \Psi_{i} $ the Lagrange polynomials which is $ 1 $ at $x_{i} $ and $ 0 $ at all the other collocation nodes. For the
	commonly used Gauss-Lobatto points one has
	\begin{align}
		\begin{split}
			\Psi_{i}(x)=\dfrac{1}{N(N+1)L_{N}(x_{i})}\dfrac{(1-x^{2})L_{N}(x)}{x-x_{i}}.
		\end{split}
	\end{align} 
	Observe that $\{\Psi_i,\; i\in I\}$ constitutes a basis in $ \mathbb{P}_{N}^{Di}(\Omega)  $.	
	
	Now we introduce the following discrete control problem: Given $u^{0,N},\;u^{1,N} \in \mathbb{P}^{Di}_{N}(\Omega)$ and  $T>0$, find $ f^{N},\;g_{R}^{N},\;g_{L}^{N}\in L^{2}(0,T) $ such that the solution $ u^{N}\in C^{\infty}(0,T;\mathbb{P}_{N}(\Omega)) $  of system:
	\begin{align}\label{d11-d}
		\begin{split}
			\begin{cases}
				(u^{N}_{tt}-u^{N}_{xx})(t,x_{i})=  g_{L}^{N}(t)G^{N}_{L}(x_{i})+g_{R}^{N}(t)G^{N}_{R}(x_{i})&\mbox{in}\ (t,x_{i})\in (0,T)\times C^{\Omega}\\
				u^{N}(t,1)=f^{N}(t)&\mbox{in}\ t\in \ (0,T)\\
				u^{N}(t,-1)=0& \mbox{in}\ t\in \ (0,T)\\
				u^{N}(0,x_{i})=u^{0,N}(x_i),u_{t}^{N}(0,x_{i})=u^{1,N}(x_i)&\mbox{ for }\ x_{i}\in C^{\Omega},
			\end{cases}
		\end{split}
	\end{align}
	satisfies
	\begin{align}\label{eq141-d}
		\begin{split}
			u^{N}(T,x_{i})=u^{N}_{t}(T,x_{i})=0,\ x_{i}\in C^{\Omega}.
		\end{split}
	\end{align} 
	Here $ G^{N}_{L},G^{N}_{R} \in \mathbb{P}_{N-1}(\Omega)$ are defined by 
	\begin{align}\label{condc1}
		\begin{split}
			\begin{cases}
				G^{N}_{L}(x_{i})=\bigg(\dfrac{h^{L}_{xx}}{\sqrt{\omega_{0}}}-\dfrac{\Psi_{0,x}}{\sqrt{\omega_{0}}\omega_{0}}\bigg)(x_{i}),\quad G^{N}_{R}(x_{i})
				= \bigg(\dfrac{h^{R}_{xx}}{\sqrt{\omega_{N}}}+\dfrac{\Psi_{N,x}}{\sqrt{\omega_{N}}\omega_{N}}\bigg)(x_{i})\\
				h^{L},\ h^{R}\in \mathbb{P}_{N}^{Di}(\Omega)\\
				h^{L}(x_{i})=\dfrac{1-x_{i}}{2},\ h^{R}(x_{i})=\dfrac{1+x_{i}}{2},\ x_{i}\in C^{\Omega}.
			\end{cases}
		\end{split}
	\end{align} 
	Note that $h^{L}(x)\neq\dfrac{1-x}{2}$ since $h^{L}\in \mathbb{P}_{N}^{Di}(\Omega)$. Something similar can be said about $h^{R}(x)$. 	
	
	The main results in this paper are the following :
	\begin{theorem}\label{th1}
		Given $T>4(2+N^{-1})$ and $(u^{0,N}, \; u^{1,N}) \in \mathbb{P}^{Di}_{N}(\Omega)\times \mathbb{P}^{Di}_{N}(\Omega)$, there exist controls $f^N,\; g^{N}_{L},\; g^{N}_{R} \in L^2(0,T)$ such that the solution $u^N$ of 
		\eqref{d11-d} satisfies \eqref{eq141-d}.
	\end{theorem}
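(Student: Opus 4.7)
The plan is to adapt the Hilbert Uniqueness Method (HUM) to the discrete collocation setting. First I introduce the backward adjoint problem: a polynomial $v^N \in \mathbb{P}_N(\Omega)$ with $v^N(t,\pm 1)=0$, satisfying $v^N_{tt}-v^N_{xx}=0$ at the interior LGL nodes with Cauchy data $(v^{0,N},v^{1,N})$ prescribed at $t=T$. Testing \eqref{d11-d} against $\eta(t) v^N$, using the discrete inner product $(\cdot,\cdot)_N$, the exactness identity \eqref{intn}, and discrete integration by parts in $x$, I rewrite the controllability requirement \eqref{eq141-d} as the optimality condition of a strictly convex quadratic functional of the form
\[
J^N(v^{0,N},v^{1,N})=\frac{1}{2}\int_0^T \eta(t)\Big(|v^N_x(t,1)|^2+|v^N_x(t,-1)|^2\Big)\,dt+\langle (u^{0,N},u^{1,N}),(v^{0,N},v^{1,N})\rangle_N.
\]
If $J^N$ is coercive on a suitable discrete energy space, its minimizer $\hat v^N$ produces the triple $(f^N,g^N_R,g^N_L)$ as explicit multiples of $\eta(t)\hat v^N_x(t,1)$ and $\eta(t)\hat v^N_x(t,-1)$. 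Existence of controls therefore reduces to proving a discrete observability inequality for the adjoint.

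Next I would explain why the profiles $G^N_L,G^N_R$ in \eqref{condc1} are chosen as they are. In the pairing $(g^N_R G^N_R+g^N_L G^N_L,v^N)_N$, I expect the combination $h^R_{xx}/\sqrt{\omega_N}+\Psi_{N,x}/(\sqrt{\omega_N}\omega_N)$ to be engineered so that discrete integration by parts against any $v^N\in\mathbb{P}^{Di}_N(\Omega)$ collapses to a constant multiple of the clean trace $v^N_x(t,1)$; symmetrically $G^N_L$ should collapse to $v^N_x(t,-1)$. This way the three control inputs $f^N,g^N_R,g^N_L$ correspond dually to exactly the three scalar observations appearing in $J^N$, and boundary observations at \emph{both} endpoints become available, which will be crucial in the observability step.

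The central step is to prove the observability via the continuous error equation. Since $v^N\in\mathbb{P}_N(\Omega)$ solves the wave equation only at interior nodes, the residual $r^N:=v^N_{tt}-v^N_{xx}$ is a polynomial of degree $N$ in $x$ vanishing at the $N-1$ interior LGL nodes; combined with $v^N(t,\pm 1)=0$ this forces
\[
r^N(t,x)=-v^N_{xx}(t,-1)\,\Psi_0(x)-v^N_{xx}(t,1)\,\Psi_N(x).
\]
Hence $v^N$ is a \emph{continuous} solution of $v^N_{tt}-v^N_{xx}=r^N$ on $\Omega$ with homogeneous Dirichlet data, and I can apply the classical multiplier $xv^N_x+v^N/2$ exactly as in Lions' proof of the continuous observability. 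This produces
\[
(T-4C_2)\,\mathcal E^N(0)\le C\int_0^T\Big(|v^N_x(t,1)|^2+|v^N_x(t,-1)|^2\Big)dt+\mathcal R^N,
\]
where $\mathcal E^N$ is the natural discrete energy and $\mathcal R^N$ collects cross terms involving $r^N$, $\Psi_0$, $\Psi_N$. The factor $4C_2$ arises because the multiplier identity is computed in the continuous $L^2$-norm while the energy that needs to be bounded from below is the discrete one, so the equivalence \eqref{eq10} with constant $C_2=2+N^{-1}$ enters directly, yielding the sharp threshold $T>4(2+N^{-1})$.

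The hard part will be the handling of $\mathcal R^N$: the boundary derivatives $v^N_{xx}(t,\pm 1)$ must be controlled by the observations $v^N_x(t,\pm 1)$ together with a strict fraction of $\mathcal E^N(0)$, using sharp bounds on the Lagrange polynomials $\Psi_0,\Psi_N$, on the boundary LGL weights $\omega_0,\omega_N\sim N^{-2}$, and on spectral inverse inequalities, all of which are assembled in the Appendix. Once $\mathcal R^N$ is absorbed for $T>4(2+N^{-1})$, the coercivity of $J^N$ follows immediately, a minimizer exists by direct methods, and reading off the Euler--Lagrange equation gives the explicit formulas for $f^N,g^N_R,g^N_L$, completing the proof of Theorem~\ref{th1}.
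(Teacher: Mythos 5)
Your overall strategy (HUM on the collocation system, rewriting the adjoint as the continuous error equation $\phi^N_{tt}-\phi^N_{xx}=-\phi^N_{xx}(t,-1)\Psi_0-\phi^N_{xx}(t,1)\Psi_N$, classical multipliers, and the threshold $T>4(2+N^{-1})$ coming from the norm-equivalence constant $C_2=2+N^{-1}$) is exactly the paper's route, and your identification of the residual is correct. However, there is a genuine structural gap in how you set up the duality, and it propagates into the functional and the observability step. The profiles $G^N_L,G^N_R$ in \eqref{condc1} are \emph{not} engineered to collapse to the clean traces $\phi^N_x(t,\mp 1)$. A direct computation using the exactness of the LGL quadrature (as in \eqref{eq_c1}--\eqref{eq_c3}) gives
\begin{equation*}
(G^N_R,\phi^N)_N=-\sqrt{\omega_N}\,\phi^N_{xx}(t,1),\qquad (G^N_L,\phi^N)_N=-\sqrt{\omega_0}\,\phi^N_{xx}(t,-1),
\end{equation*}
so the auxiliary controls are dual to the weighted \emph{second}-derivative traces. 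Likewise the Dirichlet control $f^N$ is dual not to $\phi^N_x(t,1)$ but to $\phi^N_x(t,1)-\omega_N\phi^N_{xx}(t,1)$: the correction term appears because the discrete inner product sums over the boundary nodes while the adjoint equation holds only at interior nodes. Consequently the correct functional is \eqref{eq201-d}, whose observation terms are $|\phi^N_x(t,1)-\omega_N\phi^N_{xx}(t,1)|^2$, $\omega_N|\phi^N_{xx}(t,1)|^2$ and $\omega_0|\phi^N_{xx}(t,-1)|^2$; there is no $\phi^N_x(t,-1)$ observation anywhere, since no Dirichlet control acts at $x=-1$. Minimizing your proposed $J^N$ would not produce controls for \eqref{d11-d}, because its Euler--Lagrange equation does not match the variational characterization \eqref{key11-d} forced by the system.

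This mismatch also undermines your plan for the remainder $\mathcal R^N$. You propose to absorb the boundary terms $\phi^N_{xx}(t,\pm 1)$ into the first-derivative observations plus a fraction of the energy; but the whole point of introducing $g^N_L,g^N_R$ is to avoid having to do this. The paper keeps $\omega_N\int_0^T|\phi^N_{xx}(t,1)|^2dt+\omega_0\int_0^T|\phi^N_{xx}(t,-1)|^2dt$ as legitimate observation terms on the right-hand side of the observability inequality \eqref{eq2441-d_bis}, precisely because they are dual to the auxiliary controls; the cross terms from the multiplier identity are then handled by Young's inequality against these observations and against $\varepsilon E^N_T$. Absorbing the second-derivative traces into $\phi^N_x(t,\pm1)$ alone fails uniformly in $N$ (this is the content of \eqref{nouniform} and the spectral discussion around \eqref{obs_rat}), and while a non-uniform constant would formally suffice for the fixed-$N$ statement of Theorem~\ref{th1}, it cannot rescue a functional whose observation terms do not correspond to the given control operators in the first place.
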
	
	
	\begin{theorem}\label{thcon1-d}
		Given $(u^{0}, u^{1}) \in L^2\times H^{-1}$, there exists a sequence $(u^{0,N}, u^{1,N}) \in (\mathbb{P}^{Di}_{N}(\Omega))^2$ such that  
		\[
		(u^{0,N}, u^{1,N}) \to (u^{0}, u^{1}) \;\mbox{in}\; L^2\times H^{-1}, \quad \mbox{ as $N\to \infty$}.\]
		Furthermore, for  any $T>4(2+N^{-1})$, we can choose the controls $f^N,\; g^{N}_{L},\; g^{N}_{R} \in L^2(0,T)$ such that the solution $u^N$ of \eqref{d11-d} satisfies \eqref{eq141-d} and
		$$
		f^N \to f, \quad g^{N}_{R} \to 0,\quad g^{N}_{L} \to 0,\quad \mbox{ as $N\to \infty$,  in } L^2(0,T),
		$$ 
		where $f$ is a control of the continuous wave equation \eqref{eq1-d}. 
		
		When $u^0$ (resp. $u^1$) is a continuous functions we can just take $u^{0,N}\in \mathbb{P}_N^{Di}$ such that $u^{0,N}(x_i)=u^0(x_i)$ (resp. $u^{1,N}(x_i)=u^1(x_i)$).
	\end{theorem}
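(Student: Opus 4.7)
The plan is to combine a discrete HUM construction with the uniform observability inequality that underlies Theorem~\ref{th1} and the Appendix's spectral analysis of the low-frequency modes. For the initial data, I would take $u^{0,N}$ to be the $(\cdot,\cdot)_N$-orthogonal projection of $u^0$ onto $\mathbb{P}_N^{Di}(\Omega)$ and $u^{1,N}$ the dual projection of $u^1$ implemented through the inverse discrete Dirichlet Laplacian; standard spectral approximation theory then gives $(u^{0,N},u^{1,N})\to(u^{0},u^{1})$ strongly in $L^2\times H^{-1}$. When $u^0$ (respectively $u^1$) is continuous, the LGL interpolant satisfies the prescribed nodal values and converges by classical LGL interpolation error estimates, which covers the last sentence of the statement.

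Among the controls produced by Theorem~\ref{th1}, I would single out the HUM minimum-weighted-norm triple $(f^N,g^N_L,g^N_R)$ defined as the Euler--Lagrange controls of the strictly convex functional
\[
J^N(\phi^{0,N},\phi^{1,N})=\tfrac12\int_0^T\!\eta(t)\bigl(|\phi^N_x(t,1)|^2+|\phi^N(t,-1)|^2+|\phi^N(t,1)|^2\bigr)dt-\langle u^{1,N},\phi^{0,N}\rangle+\langle u^{0,N},\phi^{1,N}\rangle,
\]
where $\phi^N$ solves the backward adjoint collocation system with terminal data $(\phi^{0,N},\phi^{1,N})\in(\mathbb{P}_N^{Di}(\Omega))^2$, so that $f^N=\eta\,\phi^N_x(\cdot,1)$, $g^N_L=\eta\,\phi^N(\cdot,-1)$, $g^N_R=\eta\,\phi^N(\cdot,1)$. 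Coercivity of $J^N$ and hence existence and uniqueness of the minimizer come from the uniform discrete observability inequality proved in Theorem~\ref{th1} via the continuous error-equation reformulation; this same inequality gives a uniform bound $\|(\phi^{0,N},\phi^{1,N})\|_{H_0^1\times L^2}\le C\,\|(u^{0,N},u^{1,N})\|_{L^2\times H^{-1}}$ and, in turn, uniform $L^2(0,T)$ bounds on $f^N,g^N_L,g^N_R$.

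Extracting weakly convergent subsequences $f^N\rightharpoonup f$, $g^N_L\rightharpoonup g_L$, $g^N_R\rightharpoonup g_R$ in $L^2(0,T)$, and $\phi^N\rightharpoonup \phi$ in the natural energy space, I would pass to the limit in the weak formulation of \eqref{d11-d}. Here the spectral results of the Appendix are essential: after projecting onto low frequencies, the discrete eigenpairs converge to those of $-\partial_{xx}$ with Dirichlet data, the error term in the equivalent continuous error equation vanishes when tested against a fixed smooth test function, and the boundary traces of $\phi^N$ converge to the corresponding traces of the continuous adjoint $\phi$. This identifies the weak limit $u$ as a solution of \eqref{eq1-d} with boundary datum $f$ that satisfies $u(T)=u_t(T)=0$.

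It remains to show $g_L=g_R=0$ and that $f$ is the continuous HUM control. Since the continuous adjoint satisfies $\phi(t,\pm1)=0$, the boundary parts of $J^N$ evaluated on a recovery sequence vanish in the limit, so $J^N$ $\Gamma$-converges, on the low-frequency regime, to the continuous HUM functional; the inequality $\liminf\inf J^N\ge \inf J$ combined with the upper bound obtained by plugging smooth competitors forces the boundary traces of the minimizing $\phi^N$ to vanish in $L^2(0,T)$, hence $g^N_L,g^N_R\to 0$ strongly. The limit $f$ is then identified with the minimizer of the continuous HUM functional, and convergence of the norms upgrades $f^N\rightharpoonup f$ to strong convergence in $L^2(0,T)$. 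The hard part will be this $\Gamma$-convergence step: the functions $G^N_L,G^N_R$ in \eqref{condc1} are not themselves small, so the smallness of $g^N_L,g^N_R$ has to emerge entirely from the dual pairing against the low-frequency discrete adjoint, which is precisely where the fine spectral estimates of the Appendix come in.
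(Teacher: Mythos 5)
There is a genuine gap, and it sits at the very center of your construction. You define the auxiliary controls as $g^N_L=\eta\,\phi^N(\cdot,-1)$ and $g^N_R=\eta\,\phi^N(\cdot,1)$ and put $|\phi^N(t,\pm1)|^2$ into the quadratic part of $J^N$. But the discrete adjoint system \eqref{eq1111-d} imposes $\phi^N(t,1)=\phi^N(t,-1)=0$, so these terms are identically zero: your $g^N_L,g^N_R$ vanish for every $N$, and your functional collapses to the unreinforced one $\tfrac12\int\eta|\phi^N_x(t,1)|^2\,dt-\langle\cdot,\cdot\rangle_N$. The paper shows precisely that this functional is \emph{not} uniformly coercive: by \eqref{nouniform} the observability constant for the observation $\int_0^T|\phi^N_x(t,1)|^2dt$ alone blows up as $N\to\infty$, because of the loss of spectral gap and the degeneration of the eigenfunction observability quotient \eqref{obs_rat}. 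The whole point of the scheme is that the reinforcing observations are the boundary values of the \emph{second} derivative, $\sqrt{\omega_N}\,\phi^N_{xx}(t,1)$ and $\sqrt{\omega_0}\,\phi^N_{xx}(t,-1)$ (which do not vanish for the collocation adjoint), and that the control paired with the Dirichlet datum is $f^N=\eta\,(\phi^N_x(t,1)-\omega_N\phi^N_{xx}(t,1))$, as dictated by the variational identity \eqref{key11-d}. With your choice, the Euler--Lagrange equation does not reproduce \eqref{key11-d}, so the resulting $f^N$ is not even a discrete control, and no uniform bound on the minimizers is available. Your final ``$\Gamma$-convergence'' step then has nothing to act on: you argue that the boundary parts vanish in the limit, but they were never there, and the quantities that actually must be shown to vanish, $\sqrt{\omega_N}\,\hat\phi^N_{xx}(t,\pm1)$, are exactly the ones your functional omits. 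The paper handles this in Step 3 by comparing the optimality identities \eqref{stronger} and \eqref{eq555} and using weak lower semicontinuity to force $h_R=h_L=0$ and convergence of norms.

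A secondary issue: for the initial data you propose the plain $(\cdot,\cdot)_N$-orthogonal projection of $u^0$ and a dual projection of $u^1$. The paper instead truncates the expansion of $(u^0,u^1)$ to the low-frequency discrete eigenmodes $|k|\le r(N)=\alpha N^{1/8}$ (Lemma \ref{dataconv}), because the spectral convergence estimates of Theorem \ref{theigen1-d} are only available in that range; this truncation is what yields both the convergence \eqref{ini} and, crucially, the uniform duality bound \eqref{2cond} that feeds into the uniform bound on the controls. A full projection retains high-frequency discrete modes for which no approximation estimates are proved, so at minimum this step would need a separate argument.
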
		
	
	\begin{remark}\label{remark1}
		Note that the control time $T$ in Theorem \ref{thcon1-d} is basically two times the time required in the continuous problem. This is due to the constant $C_2$ in  \eqref{eq10} and probably not optimal, as we illustrate in the experiments below. 
	\end{remark}
	
	\subsection{Existence of discrete controls: proof of Theorem \ref{th1}}
	
	In this section we prove Theorem \ref{th1}. We first introduce a variational characterization of discrete controls \eqref{d11-d} and then prove that a particular discrete control can be obtained as the minimizer of a convex quadratic functional defined on a polynomial space. Finally, we prove the coerciveness of the functional that guarantees the existence of minimizers. 
	
	Let us introduce the following bilinear form in $ \mathbb{P}^{Di}_{N}(\Omega)\times \mathbb{P}^{Di}_{N}(\Omega) $,
	\begin{align}\label{dualityN}
		\left\langle (\phi^{0,N},\phi^{1,N}),(u^{0,N},u^{1,N})\right\rangle _{N}=(u^{1,N},\phi^{0,N})_{N}-(u^{0,N},\phi^{1,N})_{N} .
	\end{align}
	
	\begin{lemma}\label{lemvar1-d} Assume that $ T>0$, and consider some initial data $(u^{0,N},u^{1,N})\in \mathbb{P}^{Di}_{N}(\Omega)\times \mathbb{P}^{Di}_{N}(\Omega)$.
		Any controls $  f^{N},g_{R}^{N},g_{L}^{N}$ that make the solution of the discrete system \eqref{d11-d} satisfy \eqref{eq141-d} are solutions of,
		\begin{align}\label{key11-d}
			\begin{split}
				&	\int_{0}^{T}(\phi^{N}_{x}(t,1)-\omega_{N}\phi^{N}_{xx}(t,1))f^{N}(t) dt+\int_{0}^{T}\sqrt{\omega_{N}}\phi^{N}_{xx}(t,1)g_{R}^{N}(t) dt\\&\hspace*{0.5cm}
				+\int_{0}^{T}\sqrt{\omega_{0}}\phi^{N}_{xx}(t,-1)g_{L}^{N}(t) dt-\big<(\phi^{N}(0,\cdot),\phi^{N}_{t}(0,\cdot)),(u^{0,N},u^{1,N})\big>_{N}=0,
			\end{split}
		\end{align}
		for all $(\phi^{0,N},\phi^{1,N})\in \mathbb{P}^{Di}_{N}(\Omega)\times \mathbb{P}^{Di}_{N}(\Omega)$, where $ (\phi^{N},\phi^{N}_{t}) \in \mathbb{P}^{Di}_{N}(\Omega)\times \mathbb{P}^{Di}_{N}(\Omega) $ is the solution of the following collocation backwards wave equation :
		\begin{align}\label{eq1111-d}
			\begin{cases}
				(\phi^{N}_{tt}-\phi^{N}_{xx})(t,x_{i})=0&\mbox{in}\ (t,x_{i})\in (0,T)\times C^{\Omega}\\
				\phi^{N}(t,1)=\phi^{N}(t,-1)=0 &\mbox{in} \ t\in (0,T)\\
				\phi^{N}(T,x_{i})=\phi^{0,N}(x_i),\quad \phi_{t}^{N}(T,x_{i})=\phi^{1,N}(x_i)&\mbox{at}\ x_{i}\in C^{\Omega}.
			\end{cases}
		\end{align}
	\end{lemma}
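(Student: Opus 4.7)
The plan is to follow the classical HUM-style duality argument, adapted to the collocation setting. I would multiply the discrete equation \eqref{d11-d} at each interior node $x_i$ by $\omega_i \phi^N(t,x_i)$, sum over $i\in I_\Omega$, and integrate in time on $(0,T)$. Because $\phi^N(t,\pm 1)=0$, the sum over interior nodes coincides with the full discrete inner product, so the starting identity reads
\begin{equation*}
\int_0^T (u^N_{tt}-u^N_{xx},\phi^N)_N\,dt = \int_0^T g^N_L(t)\,(G^N_L,\phi^N)_N\,dt + \int_0^T g^N_R(t)\,(G^N_R,\phi^N)_N\,dt.
\end{equation*}
The goal is to move the wave operator off $u^N$ and onto $\phi^N$ using integration by parts in $t$ and $x$, so that the left-hand side becomes the duality bracket with the initial data plus a boundary contribution proportional to $f^N$, and the right-hand side reproduces the two remaining terms of \eqref{key11-d}.

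The temporal side is routine: two integrations by parts in $t$, together with the target condition \eqref{eq141-d} at $t=T$, the terminal data for $\phi^N$ and the initial data for $u^N$, assemble exactly into $-\langle(\phi^N(0,\cdot),\phi^N_t(0,\cdot)),(u^{0,N},u^{1,N})\rangle_N$ via definition \eqref{dualityN}, plus a remaining term $\int_0^T (u^N,\phi^N_{tt})_N\,dt$. For the spatial part the key tool is \eqref{intn}: whenever the integrand belongs to $\mathbb{P}_{2N-1}$ the discrete inner product equals the $L^2$ inner product. Since $u^N_{xx}\phi^N$, $u^N\phi^N_{xx}$, $h^R_{xx}\phi^N$ and $\Psi_{N,x}\phi^N$ all lie in $\mathbb{P}_{2N-1}$, I can freely pass to continuous integrals on these terms and integrate by parts in $x$. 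Using the boundary values $u^N(t,-1)=0$, $u^N(t,1)=f^N(t)$, $\phi^N(t,\pm 1)=0$, and the adjoint relation $\phi^N_{tt}=\phi^N_{xx}$ at interior nodes, the wave operator transfers to $\phi^N$ at the cost of two boundary contributions: a classical one, $f^N(t)\phi^N_x(t,1)$, and a "discrete-to-continuous discrepancy", $-\omega_N f^N(t)\phi^N_{xx}(t,1)$, which arises from the boundary node $x_N$ when writing the identity $(u^N,\phi^N_{xx})_N = (u^N,\phi^N_{tt})_N + \omega_N f^N(t)\phi^N_{xx}(t,1)$.

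It remains to compute $(G^N_R,\phi^N)_N$ and $(G^N_L,\phi^N)_N$. The crucial identification is that, since $h^R\in\mathbb{P}_N^{Di}$ vanishes at $\pm 1$ and matches $\frac{1+x}{2}$ at all interior LGL nodes, uniqueness of polynomial interpolation forces
\begin{equation*}
h^R(x) = \frac{1+x}{2}-\Psi_N(x), \qquad h^L(x) = \frac{1-x}{2}-\Psi_0(x),
\end{equation*}
so that $h^R_{xx}=-\Psi_{N,xx}$ and $h^L_{xx}=-\Psi_{0,xx}$. With this, I would replace the discrete pairings by continuous integrals (exactness applies), integrate by parts twice in the first summand of $G^N_R$ and once in the second, and use $\Psi_N(1)=1$, $\Psi_N(-1)=0$, $\phi^N(\pm 1)=0$ together with the identity $(\Psi_N,\phi^N_{xx})_N = \omega_N\phi^N_{xx}(1)$ to obtain $(G^N_R,\phi^N)_N = -\sqrt{\omega_N}\,\phi^N_{xx}(t,1)$. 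The analogous computation yields $(G^N_L,\phi^N)_N = -\sqrt{\omega_0}\,\phi^N_{xx}(t,-1)$. Substituting these into the starting identity and rearranging produces precisely \eqref{key11-d}.

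The main obstacle is not conceptual but rather careful bookkeeping: one must track (i) precisely when the LGL quadrature can be invoked (the integrand must live in $\mathbb{P}_{2N-1}$), (ii) how boundary nodes where $u^N\neq 0$ but $\phi^N=0$ contribute when moving between the sums over $I$ and $I_\Omega$, and (iii) the cancellations in $(G^N_R,\phi^N)_N$ whereby the two "$\phi^N_x(t,1)/\sqrt{\omega_N}$" contributions of opposite sign annihilate to leave a clean $-\sqrt{\omega_N}\phi^N_{xx}(t,1)$. Getting the algebra right is what guarantees that the delicate discrepancy term $-\omega_N f^N(t)\phi^N_{xx}(t,1)$ combines with the classical $f^N(t)\phi^N_x(t,1)$ to form the factor $\phi^N_x(t,1)-\omega_N\phi^N_{xx}(t,1)$ that appears in \eqref{key11-d}.
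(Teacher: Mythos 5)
Your proposal is correct and follows essentially the same duality argument as the paper: multiply by $\omega_i\phi^N$, sum, use LGL exactness to pass to continuous integrals, integrate by parts, and track the boundary-node discrepancy that produces the $-\omega_N f^N\phi^N_{xx}(t,1)$ term. The only cosmetic difference is your explicit identification $h^R=\tfrac{1+x}{2}-\Psi_N$ (hence $h^R_{xx}=-\Psi_{N,xx}$), which is a valid shortcut; the paper instead evaluates $\sum_{i\in I_\Omega}(h^R\phi^N_{xx})(x_i)\omega_i$ directly via quadrature exactness, arriving at the same value $(G^N_R,\phi^N)_N=-\sqrt{\omega_N}\,\phi^N_{xx}(t,1)$.
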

	\begin{proof}[\textbf{Proof.}]
		Multiplying the equation of $ u^{N}(t,x_i) $ in \eqref{d11-d} by $ \omega_{i} \phi^{N}(t,x_i)$ and adding in $i\in I$ one obtains,
		\begin{align}\label{var2}
			\begin{split}
				\int_{0}^{T}(u^{N}_{tt}- u_{xx}^{N},\phi^{N})_{N}dt
				= \int_{0}^{T}g_{L}^{N}\; ( G^{N}_{L},\phi^{N})_{N}dt+\int_{0}^{T}g_{R}^{N} \; (G^{N}_{R},\phi^{N})_{N}dt.
			\end{split}
		\end{align}
		We first simplify the left hand side. Using \eqref{intn}, integrating by parts in time and taking into account that $f^N,$ $g^N_L$ and $g^N_R$ are controls we have 
		\begin{align}\label{left12}
			\begin{split}
				&\int_{0}^{T}(u^{N}_{tt}-u_{xx}^{N},\phi^{N})_{N}dt=\int_{0}^{T}(u^{N},\phi^{N}_{tt})_{N}dt-\int_{0}^{T}\int_{-1}^{1}u^{N}_{xx}\phi^{N}dxdt\\
				&\qquad-\left\langle (\phi^{N}(0,.),\phi^{N}_{t}(0,.)),(u^{0,N},u^{1,N})\right\rangle  _{N}.
			\end{split}
		\end{align}
		We now	integrate by parts in $x$ and use again formula \eqref{intn}, since the resulting integrand is also a polynomial of degree $2N-2$,	
		\begin{align}\label{left1}
			\begin{split}
				&0=\quad\int_{0}^{T}(u^{N}_{tt}-u_{xx}^{N},\phi^{N})_{N}dt =\int_{0}^{T}(u^{N},\phi^{N}_{tt}-\phi_{xx}^{N})_{N}dt\\&\quad + \int_0^T f^N(t)\phi_x^N(t,1)dt-\left\langle (\phi^{N}(0,.),\phi^{N}_{t}(0,.)),(u^{0,N},u^{1,N})\right\rangle  _{N}\\
				&=\int_{0}^{T}f^{N}(t)(\phi^{N}_{x}-\omega_{N}\phi^{N}_{xx})(t,1)dt-\left\langle (\phi^{N}(0,.),\phi^{N}_{t}(0,.)),(u^{0,N},u^{1,N})\right\rangle  _{N}.
			\end{split}
		\end{align}
		The last equality is a consequence of the first equation in \eqref{eq1111-d}. Note that an extra term appears in the right hand side of this expression coming from the fact that the first equation in \eqref{eq1111-d} is only true for the interior nodes while the discrete scalar product involves also the boundary nodes.
		
		For the right hand side in \eqref{var2} we use again formula \eqref{intn} and the fact that $G_R^N \phi^N$ is a polynomial of degree $2N-1$,
		\begin{align} \label{eq_c1}
			\begin{split}
				(G^{N}_{R},\phi^{N})_{N}=	&\dfrac{1}{\sqrt{\omega_{N}}} \left( h^{R}_{xx}+\dfrac{\Psi_{N,x}}{\omega_{N}},\phi^{N} \right)_{N}
				=\dfrac{1}{\sqrt{\omega_{N}}}  \int_{-1}^{1}\left( h^{R}_{xx}+\dfrac{\Psi_{N,x}}{\omega_{N}} \right)\phi^{N} dx\\
				&=\dfrac{1}{\sqrt{\omega_{N}}} \left( \int_{-1}^{1}h^{R}\phi^{N}_{xx}dx-\int_{-1}^{1}\dfrac{\Psi_{N}}{\omega_{N}}\phi^{N}_{x}dx\right) \\
				&=\dfrac{1}{\sqrt{\omega_{N}}} \left( \sum_{i\in  I_{\Omega}}(h^{R}\phi^{N}_{xx})(t,x_{i})\omega_{i}-\phi^{N}_{x}(t,1)\right) dt.
			\end{split}
		\end{align}
		To simplify the first term in the right hand side we observe that $h^R(x_i)=(1+x_i)/2$ at the interior nodes, i.e. $i\in I_\Omega$. Then, 
		\begin{align} \label{eq_c2}
			\begin{split}				
				&\sum_{i\in  I_{\Omega}}(h^{R}\phi^{N}_{xx})(t,x_{i})\omega_{i}= \int_{-1}^{1}\frac{1+x}{2}\phi^{N}_{xx}\; dx-\omega_{N}\phi^{N}_{xx}(t,1)=
				\phi^{N}_{x}(t,1)-\omega_{N}\phi^{N}_{xx}(t,1).
			\end{split}
		\end{align}		
		From \eqref{eq_c1}-\eqref{eq_c2} we easily obtain,
		\begin{equation} \label{eq_c3}
			(G^{N}_{R},\phi^{N})_{N}=-\sqrt{\omega_N}\phi^{N}_{xx}(t,1). 
		\end{equation}
		Combining \eqref{var2}, \eqref{left1} and \eqref{eq_c3} we easily find \eqref{key11-d}.
	\end{proof} 
	
	According to HUM, one possibility to construct controls  $ f^{N},g_{R}^{N},g_{L}^{N} $ that satisfy the variational condition \eqref{eq141-d} is as minimizers of the following cost functional   $ J^{N}: \mathbb{P}^{Di}_{N}(\Omega)\times \mathbb{P}^{Di}_{N}(\Omega)\longrightarrow \mathbb{R} $ defined  by
	\begin{align}\label{eq201-d}
		\begin{split}
			& J^{N}(\phi^{0,N},\phi^{1,N})=\dfrac{1}{2}\int_{0}^{T}
			\eta(t)\left| \phi^{N}_{x}(t,1)-\omega_{N}\phi^{N}_{xx}(t,1)\right| ^{2}dt\\
			&\quad +\dfrac{1}{2}\int_{0}^{T}\eta(t)\omega_{N}\left| \phi_{xx}^{N}(t,1)\right| ^{2}dt+\dfrac{1}{2}\int_{0}^{T}\eta(t)\omega_{0}\left| \phi_{xx}^{N}(t,-1)\right| ^{2}dt\\
			&\quad -\big<(\phi^{N}(0,\cdot),\phi^{N}_{t}(0,\cdot)),(u^{0,N},u^{1,N})\big>_{N},
		\end{split}
	\end{align}
	where $ (\phi^{N},\phi^{N}_{t}) $ is the solution  of \eqref{eq1111-d} with final data  $ (\phi^{0,N},\phi^{1,N})\in \mathbb{P}^{Di}_{N}(\Omega)\times \mathbb{P}^{Di}_{N}(\Omega)$. The function $ \eta(t) $ is a
	prescribed smooth function in $ [0, T] $ introduced to guarantee that the controls vanish in a neighborhood
	of $ t = 0, T $. Thus, we consider $ \delta > 0 $  a small number and $0\leq \eta(t)\leq 1$ such that,\begin{align}\label{hypeta}
		\eta(t)=\begin{cases}
			1 \;\mbox{in}\;[2\delta,T-2\delta]\\
			0\;\mbox{in}\;[0,\delta]\cup[T-\delta,T].
		\end{cases}
	\end{align}
	Note that both $\eta$ and $J^N$ will depend on this parameter $\delta$ but this is not relevant in the rest of the analysis and we will not make explicit this dependence in the notation. 
	
	\begin{theorem}\label{thdefc1-d} Assume $ (u^{0,N},u^{1,N})\in \mathbb{P}^{Di}_{N}(\Omega)\times \mathbb{P}^{Di}_{N}(\Omega)$ and that $(\hat{\phi}^{0,N},\hat{\phi}^{1,N})\in \mathbb{P}^{Di}_{N}(\Omega)\times \mathbb{P}^{Di}_{N}(\Omega)$ is a minimizer of $ J^{N}.$ If $ \hat{\phi^{N}} $ is the corresponding solution of \eqref{eq1111-d} with final data $ (\hat{\phi}^{0,N},\hat{\phi}^{1,N}) $ then
		\begin{align}\label{eq181-d}
			\begin{split}
				&f^{N}(t)=\eta(t)( \hat{\phi^{N}_{x}}(t,1)-\omega_{N}\hat{\phi^{N}_{xx}}(t,1))\\
				&g^{N}_{R}(t)=\eta(t)\sqrt{\omega_{N}}\hat{\phi^{N}_{xx}}(t,1), \qquad g^{N}_{L}(t)=\eta(t)\sqrt{\omega_{0}}\hat{\phi^{N}_{xx}}(t,-1),
			\end{split} 
		\end{align}
		are controls such that the solution of \eqref{d11-d} satisfies \eqref{eq141-d}.
	\end{theorem}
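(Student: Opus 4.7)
The plan is to apply the standard HUM characterization in this discrete setting: since $J^N$ is convex and quadratic in $(\phi^{0,N},\phi^{1,N})$, any minimizer $(\hat{\phi}^{0,N},\hat{\phi}^{1,N})$ is characterized by the vanishing of its G\^ateaux derivative in every direction $(\phi^{0,N},\phi^{1,N})\in(\mathbb{P}^{Di}_{N}(\Omega))^{2}$, and the resulting optimality condition should reproduce the variational identity \eqref{key11-d} of Lemma \ref{lemvar1-d} once the candidate controls \eqref{eq181-d} are substituted.

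Concretely, first I would differentiate each of the four terms of $J^{N}$ in \eqref{eq201-d} with respect to the final data. The first three terms produce integrals in $t$ whose integrands are $\eta(t)(\hat{\phi}^{N}_{x}-\omega_{N}\hat{\phi}^{N}_{xx})(t,1)(\phi^{N}_{x}-\omega_{N}\phi^{N}_{xx})(t,1)$, $\eta(t)\,\omega_{N}\,\hat{\phi}^{N}_{xx}(t,1)\phi^{N}_{xx}(t,1)$, and $\eta(t)\,\omega_{0}\,\hat{\phi}^{N}_{xx}(t,-1)\phi^{N}_{xx}(t,-1)$, while the last term contributes $-\langle(\phi^{N}(0,\cdot),\phi^{N}_{t}(0,\cdot)),(u^{0,N},u^{1,N})\rangle_{N}$. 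Substituting \eqref{eq181-d} in the first factor of each integrand turns the optimality condition $DJ^{N}(\hat{\phi}^{0,N},\hat{\phi}^{1,N})=0$ into precisely the variational identity \eqref{key11-d}. Thus the controls defined by \eqref{eq181-d} satisfy \eqref{key11-d} for every test function $(\phi^{0,N},\phi^{1,N})\in(\mathbb{P}^{Di}_{N}(\Omega))^{2}$.

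The second and main step is to show that this variational identity forces the terminal condition \eqref{eq141-d}. For this I would retrace the integration-by-parts calculation in the proof of Lemma \ref{lemvar1-d} with $u^{N}$ the solution of \eqref{d11-d} produced by these controls \emph{without} assuming \eqref{eq141-d} a priori; keeping the temporal boundary terms at $t=T$ yields
\[
\left\langle(\phi^{0,N},\phi^{1,N}),(u^{N}(T,\cdot),u^{N}_{t}(T,\cdot))\right\rangle_{N}=0
\]
for every $(\phi^{0,N},\phi^{1,N})\in(\mathbb{P}^{Di}_{N}(\Omega))^{2}$. Choosing $\phi^{0,N}$ and $\phi^{1,N}$ among the Lagrange polynomials $\{\Psi_{i}\}_{i\in I_{\Omega}}$, which form a basis of $\mathbb{P}^{Di}_{N}(\Omega)$, and using the positivity of the interior weights $\omega_{i}$, this immediately gives $u^{N}(T,x_{i})=u^{N}_{t}(T,x_{i})=0$ for every $x_{i}\in C^{\Omega}$, which is \eqref{eq141-d}.

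The argument is essentially mechanical; the one point requiring care is the spatial boundary contribution at $x=\pm1$ coming from $(u^{N},\phi^{N}_{tt}-\phi^{N}_{xx})_{N}$. Since $\phi^{N}$ vanishes at $x=\pm1$ but $\phi^{N}_{xx}(t,\pm1)$ does not, this discrete inner product produces an extra term $-\omega_{N}f^{N}(t)\phi^{N}_{xx}(t,1)$ (and the analogous term at $x=-1$, which vanishes because $u^{N}(t,-1)=0$), and this combines with the standard boundary contribution $f^{N}(t)\phi^{N}_{x}(t,1)$ from integration by parts of $(u^{N}_{xx},\phi^{N})_{N}$ to produce exactly the combination $f^{N}(\phi^{N}_{x}-\omega_{N}\phi^{N}_{xx})(t,1)$ appearing both in \eqref{key11-d} and in $J^{N}$. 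This bookkeeping is the only real distinction between the discrete HUM derivation here and its continuous analogue.
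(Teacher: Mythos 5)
Your proof is correct and follows essentially the same route as the paper: compute the G\^ateaux derivative of $J^{N}$ at the minimizer and identify the resulting optimality condition with the variational identity \eqref{key11-d}. The only difference is that you explicitly supply the converse implication of Lemma \ref{lemvar1-d} (that \eqref{key11-d} holding for all final data forces \eqref{eq141-d}, via the terminal duality bracket and the Lagrange basis with positive weights), a step the paper invokes implicitly when it cites the lemma.
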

	\begin{proof}[\textbf{Proof.}]		
		If $ J^{N} $ achieves its minimum at $ (\hat{\phi}^{0,N},\hat{\phi}^{1,N}) $ its Gateaux derivative in the direction  $ (\phi^{0,N},\phi^{1,N}) $ must vanish, i.e.
		\begin{align*}
			0&= \int_{0}^{T}\eta(t)(\hat{\phi^{N}_{x}}(t,1)-\omega_{N}\hat{\phi^{N}_{xx}}(t,1))({\phi^{N}_{x}}(t,1)-\omega_{N}{\phi^{N}_{xx}}(t,1)) dt\\
			&\ +\int_{0}^{T}\eta(t)\omega_{N}\hat{\phi^{N}_{xx}}(t,1) \phi_{xx}^{N}(t,1)dt +\int_{0}^{T}\eta(t)\omega_{0}\hat{\phi^{N}_{xx}}(t,-1) \phi_{xx}^{N}(t,-1)dt\\
			&\ -\big<(\phi^{N}(0,\cdot),\phi^{N}_{t}(0,\cdot)),(u^{0,N},u^{1,N})\big>_{N},
		\end{align*}
		for all $ (\phi^{0,N},\phi^{1,N})\in \mathbb{P}^{Di}_{N}(\Omega)\times \mathbb{P}^{Di}_{N}(\Omega) $. From  Lemma \ref{lemvar1-d} it follows that \eqref{eq181-d} are controls for which \eqref{d11-d} holds.
	\end{proof}
	
	The functional $J^N$ is clearly continuous and convex so that the existence of a minimizer (and therefore a discrete control for system  \eqref{d11-d}) is guaranteed as soon as we prove its coercivity. This is  a consequence of the following lemma. Note that this also concludes the proof of Theorem \ref{th1}.


	\begin{lemma}\label{definv1-d}
		Given $ T > 4(2+N^{-1})$ there exists a constant $  C > 0 $, independent of $N$, such that the solutions of system \eqref{eq1111-d} satisfy  
		\begin{align}\label{eq2441-d}
			\begin{split}
				&C\left\|(\phi^{0,N}_x,\phi^{1,N}) \right\|^{2}_{N\times N}\le \int_{0}^{T}\left| \phi^{N}_{x}(t,1)-\omega_{N}\phi^{N}_{xx}(t,1)\right|^{2} dt\\
				&\qquad +\omega_{N}\int_{0}^{T}\left| \phi^{N}_{xx}(t,1)\right|^{2} dt+\omega_{0}\int_{0}^{T}\left| \phi^{N}_{xx}(t,-1)\right|^{2} dt,
			\end{split}
		\end{align}
		for any final data $ (\phi^{0,N},\phi^{1,N})\in \mathbb{P}^{Di}_{N}(\Omega)\times \mathbb{P}^{Di}_{N}(\Omega)$ and $ \eta(t) $ is defined as in \eqref{hypeta}.
	\end{lemma}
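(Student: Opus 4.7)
The plan is to replace the collocation problem \eqref{eq1111-d} by the equivalent continuous inhomogeneous wave equation satisfied by the polynomial $\phi^N$ --- the ``error equation'' announced in the introduction --- and to apply to it the classical multiplier argument for the 1-d wave equation, treating the inhomogeneity as an error term that will be absorbed by the observation side of \eqref{eq2441-d}.

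First I analyse the residual $R^N(t,x):=\phi^N_{tt}-\phi^N_{xx}$. For each $t$, $R^N(t,\cdot)$ is a polynomial of degree at most $N$ in $x$ which, by \eqref{eq1111-d}, vanishes at the $N-1$ interior LGL nodes, while at $\pm 1$ equals $R^N(t,\pm 1)=-\phi^N_{xx}(t,\pm 1)$ (since $\phi^N(t,\pm 1)\equiv 0$ implies $\phi^N_{tt}(t,\pm 1)\equiv 0$). Since the interior LGL nodes are exactly the zeros of $L^N_x$, I factor $R^N(t,x)=(a(t)+b(t)x)L^N_x(x)$, with $a$ and $b$ determined by the two boundary values above. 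Together with the standard identities $|L^N_x(\pm 1)|=N(N+1)/2$, $\|L^N_x\|_{L^2(-1,1)}^2=N(N+1)$ and $\omega_0=\omega_N=2/(N(N+1))$, this yields the key bound
\[
\|R^N(t,\cdot)\|_{L^2(-1,1)}^{2}\;\le\;C\,\omega_N\bigl(|\phi^N_{xx}(t,1)|^{2}+|\phi^N_{xx}(t,-1)|^{2}\bigr),
\]
so that $\int_0^T\|R^N(t,\cdot)\|_{L^2}^2\,dt$ is already controlled by the last two terms on the right of \eqref{eq2441-d}.

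Next I multiply the continuous PDE $\phi^N_{tt}-\phi^N_{xx}=R^N$ by $(x+1)\phi^N_x$, integrate on $(0,T)\times(-1,1)$ and use $\phi^N(t,\pm 1)=0$; two integrations by parts give the classical multiplier identity
\[
\int_0^T|\phi^N_x(t,1)|^2\,dt=\int_0^T E_c(t)\,dt+\Bigl[\int_{-1}^1\phi^N_t(x+1)\phi^N_x\,dx\Bigr]_0^T-\int_0^T\!\!\int_{-1}^1 R^N(x+1)\phi^N_x\,dx\,dt,
\]
where $E_c(t):=\tfrac12\int_{-1}^1\bigl((\phi^N_t)^2+(\phi^N_x)^2\bigr)dx$. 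The inhomogeneity forces $\frac{d}{dt}E_c=\int_{-1}^1\phi^N_tR^N\,dx$, so $E_c$ is only almost conserved; Cauchy--Schwarz and the bound on $R^N$ from the previous step show that both the total variation of $E_c$ over $[0,T]$ and the last integral in the identity have size $\omega_N^{1/2}\sqrt{\mathrm{obs}\cdot E_c^{\max}}$, which can be absorbed via Young's inequality into the observation terms of \eqref{eq2441-d} plus an arbitrarily small fraction of $E_c(T)$.

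Finally the boundary term is bounded by $2E_c(0)+2E_c(T)\le 4E_c(T)+(\text{small})$, one has $\int_0^T E_c\,dt\ge TE_c(T)-(\text{small})$, and the elementary estimate $|\phi^N_x(t,1)|^2\le 2|\phi^N_x(t,1)-\omega_N\phi^N_{xx}(t,1)|^2+2\omega_N^2|\phi^N_{xx}(t,1)|^2$ (with $\omega_N^2\le\omega_N$) lets me replace $\int_0^T|\phi^N_x(t,1)|^2\,dt$ by the first two terms of \eqref{eq2441-d}. Altogether this produces an inequality of the form $(T-4)\,E_c(T)\le C\cdot(\text{RHS of }\eqref{eq2441-d})$, and the norm equivalence \eqref{eq10} yields $\|\phi^{0,N}_x\|_N^2+\|\phi^{1,N}\|_N^2\le 2C_2E_c(T)$ with $C_2=2+N^{-1}$; this last conversion is the origin of the factor $2+N^{-1}$ in the critical time $T>4(2+N^{-1})$, in agreement with Remark \ref{remark1}. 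The delicate point is the Young absorption, where the weights must be tuned carefully so that the contributions of $R^N$ and the slight non-conservation of $E_c$ do not spoil the coefficient in front of $E_c(T)$; apart from this bookkeeping, everything reduces to the standard continuous multiplier proof, which is precisely the structural advantage emphasized in the introduction and which allows the same argument to carry over to the 2-d setting of Section 3 by separation of variables.
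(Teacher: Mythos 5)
Your proposal is correct and follows the same overall strategy as the paper: pass to the equivalent continuous ``error equation'' for $\phi^N$, note that the residual is a degree-$N$ polynomial supported (in the Lagrange sense) only at the two boundary nodes, bound its $L^2$-norm by $C\,\omega_N\bigl(|\phi^N_{xx}(t,1)|^2+|\phi^N_{xx}(t,-1)|^2\bigr)$, apply the multiplier $(x+1)\phi^N_x$, and absorb the residual contributions by Young's inequality. Your factorization $R^N=(a(t)+b(t)x)L^N_x$ is just the paper's representation $R^N=-\phi^N_{xx}(t,-1)\Psi_0-\phi^N_{xx}(t,1)\Psi_N$ in a different basis, so that step is cosmetic. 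The one substantive divergence is the choice of energy: the paper works with the discrete energy $E^N=\tfrac12(\|\phi^N_t\|_N^2+\|\phi^N_x\|_N^2)$, which is \emph{exactly} conserved (the residual does not contribute because $\phi^N_t$ vanishes at the boundary nodes), at the price of losing a factor $C_2=2+N^{-1}$ when bounding $\int_0^T\!\!\int_{-1}^1(|\phi^N_t|^2+|\phi^N_x|^2)\,dx\,dt$ from below by $\tfrac{T}{C_2}E^N_T$; this is precisely what produces the critical time $T>4C_2$. You instead keep the continuous energy $E_c$, which is only almost conserved, and control its drift $\tfrac{d}{dt}E_c=\int\phi^N_tR^N\,dx$ through the same residual bound; since no norm-equivalence loss enters the multiplier identity itself, your bookkeeping (if carried out as you describe) actually yields the observability inequality for every $T>4$ with an $N$-uniform constant, which is stronger than the lemma and consistent with Remark \ref{remark1}'s suspicion that $4(2+N^{-1})$ is not optimal. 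The only blemish is your closing sentence: in \emph{your} argument the conversion $\|(\phi^{0,N}_x,\phi^{1,N})\|^2_{N\times N}\le 2C_2E_c(T)$ only affects the multiplicative constant, not the critical time, so it is not ``the origin of the factor $2+N^{-1}$''; that factor arises in the paper's proof because the norm equivalence is applied to the time-integrated energy term rather than at the final step. This does not affect the validity of your proof for the stated hypothesis $T>4(2+N^{-1})$.
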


	\begin{proof}[\textbf{Proof.}]
		We prove the following equivalent version of \eqref{eq2441-d}
		\begin{eqnarray} \nonumber
			C\left\|(\phi^{0,N}_x,\phi^{1,N}) \right\|^{2}_{N\times N}&\le& \int_{0}^{T}\left| \phi^{N}_{x}(t,1)\right|^{2} dt+\omega_{N}\int_{0}^{T}\left| \phi^{N}_{xx}(t,1)\right|^{2} dt\\ \label{eq2441-d_bis}
			&& +\omega_{0}\int_{0}^{T}\left| \phi^{N}_{xx}(t,-1)\right|^{2} dt.
		\end{eqnarray}
		
		We first observe that the solutions of  \eqref{eq1111-d} solve the following equivalent system: 	
		\begin{align}\label{eq151-d}
			\begin{cases}
				\phi^{N}_{tt}-\phi^{N}_{xx}=-\phi_{xx}^{N}(t,-1)\Psi_{0}-\phi_{xx}^{N}(t,1)\Psi_{N}&\mbox{in} \ (t,x)\in (0,T)\times\Omega \\
				\phi^{N}(t,1)=\phi^{N}(t,-1)=0&\mbox{in} \ t\in (0,T)\\
				\phi^{N}(T,x)=\phi^{0,N},\phi_{t}^{N}(T,x)=\phi^{1,N}&\mbox{in} \ x\in\Omega.
			\end{cases}
		\end{align} 
		In fact, this is easily seen by writing 
		the polynomial $\phi^N_{tt}-\phi^N_{xx}$ in the Lagrangian basis and using system \eqref{eq1111-d}.	
		Now, we try the classical multiplier technique to recover the observability inequality. Multiplying  the equation in \eqref{eq151-d} by $ (x+1)\phi_{x}^{N} $ and integrating by parts one obtain
		\begin{align}\label{inv1-d}
			\begin{split}
				&\int_{0}^{T}\int_{-1}^{1}(x+1)(\phi^{N}_{tt}-\phi^{N}_{xx})\phi_{x}^{N}dx dt=-\int_{0}^{T}\int_{-1}^{1}(x+1)(\phi_{xx}^{N}(t,-1)\Psi_{0}\\
				&\qquad +\phi_{xx}^{N}(t,1)\Psi_{N})\phi_{x}^{N}dx dt.
			\end{split}
		\end{align}
		As for the continuous wave equation, (see \cite{bib15}) the left hand side can be simplified as follows,
		\begin{align}\label{inv1-d2}
			\begin{split}
				&\hspace*{0.5cm}\int_{0}^{T}\int_{-1}^{1}(x+1)(\phi^{N}_{tt}-\phi^{N}_{xx})\phi_{x}^{N}dx dt\\&=\int_{-1}^{1}(x+1)\phi^{N}_{t}\phi_{x}^{N}dx\bigg| _{0}^{T}+\dfrac{1}{2}\int_{0}^{T}\int_{-1}^{1}(\left| \phi_{t}^{N}\right| ^{2}+\left| \phi_{x}^{N}\right| ^{2})dx dt-\int_{0}^{T}\left| \phi_{x}^{N}(t,1)\right| ^{2}dt.
			\end{split}
		\end{align}
		Thus, if we set $ X=\int_{-1}^{1}(x+1)\phi^{N}_{t}\phi_{x}^{N}dx\bigg|_{0}^{T}$ 
		then 
		\begin{align}\label{lions1}
			\begin{split}
				&\dfrac{1}{2}\int_{0}^{T}\int_{-1}^{1}(\left| \phi_{t}^{N}\right| ^{2}+\left| \phi_{x}^{N}\right| ^{2})dx dt\le \left| X\right| + \int_{0}^{T}\left| \phi_{x}^{N}(t,1)\right| ^{2}dt \\&
				+\left| \int_{0}^{T}\phi_{xx}^{N}(t,-1)\int_{-1}^{1}(x+1)
				\phi^{N}_{x}\Psi_{0} dx dt\right| +\left| \int_{0}^{T}\phi_{xx}^{N}(t,1)\int_{-1}^{1}(x+1)\phi^{N}_{x}\Psi_{N} dxdt\right|.
			\end{split} 
		\end{align}
		In the rest of this proof we estimate each one of the terms in this expression. 
		
		We start with the left hand side in \eqref{lions1}. Define the discrete energy 
		\begin{equation} \label{eq_energy}
			E^{N}=\dfrac{1}{2}(\left\| \phi^{N}_{t}\right\| _{N}^{2}+\left\| \phi_{x}^{N}\right\|^{2} _{N}) . 
		\end{equation}
		It is easy to see that this energy is conserved, i.e. $ E^{N}_{t}= E^{N}_{0}$ for all $t>0$. We just multiply \eqref{eq1111-d} by $ \phi^{N}_{t} \omega_{i} $, add in $i\in I$ and integrate with respect time. This, together with the norm equivalence in \eqref{eq10} gives, 
		\begin{align*}
			& \dfrac{1}{2}\int_{0}^{T}\int_{-1}^{1}(\left| \phi_{t}^{N}\right| ^{2}+\left| \phi_{x}^{N}\right| ^{2})dx dt \geq \frac1{C_2}\int_{0}^{T}E^N_tdx dt = \frac{T}{C_2} E^N_T.
		\end{align*} 
		
		We now estimate the terms in the right hand side of \eqref{lions1}. We start with $X$. Observe that,
		\begin{equation}
			\left|\int_{-1}^{1}(x+1)\phi^{N}_{t}\phi_{x}^{N} dx \right|\le 2 \dfrac{1}{2}( \left\|\phi^{N}_{t} \right\|^{2}_{L^2}+\left\|\phi^{N}_{x} \right\|^{2}_{L^2})
			\le ( \left\|\phi^{N}_{t} \right\|^{2}_{ N}+\left\|\phi^{N}_{x} \right\|^{2}_{N})= 2 E^{N}_t = 2E^N_T.
		\end{equation} 
		Therefore  $
		\left|X\right|\le 4 E^{N}_{T}$.	
		Let us turn to estimate the third term in the right hand side in \eqref{lions1}. Using Young's inequality, for any $\varepsilon>0$ we can find $C_\varepsilon>0$ such that  
		\begin{align}
			\begin{split}
				&\hspace*{0.5cm}\left| \int_{0}^{T}\phi_{xx}^{N}(t,-1)\int_{-1}^{1}(x+1)
				\phi^{N}_{x}\Psi_{0} dx dt\right| \\ 
				&\le C_\varepsilon \left| \Psi_{0}\right|_{L^{2}(\Omega)}^{2}\left|\phi_{xx}^{N}(t,-1) \right|^{2}_{L^{2}(0,T)}+\varepsilon \left| \left|
				\phi^{N}_{x}\right| _{L^{2}(\Omega)}\right|_{L^{2}(0,T)}^{2}. 
			\end{split} 
		\end{align}
		Taking into account the norm equivalence in \eqref{eq10}, the conservation of the discrete energy proved above and the fact that, as 
		$\Psi_{0}\in \mathbb{P}_{N}(\Omega)$, $ \left| \Psi_{0}\right|_{L^{2}(\Omega)}^{2}\le\left\| \Psi_{0}\right\|_{N}^{2}= \omega_{0} $, we obtain
		\begin{align}\label{lions4}
			\begin{split}
				&\left| \int_{0}^{T}\phi_{xx}^{N}(t,-1)\int_{-1}^{1}(x+1)
				\phi^{N}_{x}\Psi_{0} dx dt\right|  \le C_\varepsilon \omega_{0}\left|\phi_{xx}^{N}(t,-1) \right|^{2}_{L^{2}(0,T)}+2\varepsilon TE_{T}^{N}.
			\end{split}
		\end{align}
		An analogous estimate holds for the last term in the right hand side in \eqref{lions1}.
		
		It follows from \eqref{lions1}, \eqref{lions4} and the fact that $\omega_{N}=\omega_{0}$,
		\begin{align*}
			\left(\dfrac{T}{C_{2}}-4-4\varepsilon T\right) E^{N}_{0}&\le \int_{0}^{T}\left| \phi_{x}^{N}(t,1)\right| ^{2}dt+\omega_{N} C_\varepsilon \left|\phi_{xx}^{N}(t,1) \right|^{2}_{L^{2}(0,T)}\\
			&+\omega_{N} C_\varepsilon \left|\phi_{xx}^{N}(t,-1) \right|^{2}_{L^{2}(0,T)}.
		\end{align*}
		Then, inequality \eqref{eq2441-d_bis} holds as long as $\frac{T}{C_2}-4\varepsilon T-4>0$. As $\varepsilon $ can be chosen arbitrarily small and $C_2=2+1/N$ we have the condition $T>4(2+N^{-1})$.
	\end{proof} 
	
	\begin{remark}
		The method we present here to obtain the uniform observability inequality relies on the continuous error equation \eqref{eq151-d}, equivalent to \eqref{eq1111-d}. The observability inequality is derived using the same techniques as in the continuous model (see  \cite{bib15}) and we only have to estimate the extra error term appearing in \eqref{eq151-d}. Therefore, this can be easily adapted to other equations or systems and higher dimensions where the controllability of the continuous model is known.  
	\end{remark}	
	
	\begin{remark}  Estimate \eqref{eq2441-d_bis} is known as an observability inequality. Roughly speaking it establishes that the energy of the solutions can be bounded by boundary observations, i.e. quantities that depend only on the solution at the boundary of the domain. Note that the uniformity in $N$ of the constant in \eqref{eq2441-d_bis} is not necessary to prove Theorem \ref{th1}, but it is essential to establish the convergence result in Theorem \ref{thcon1-d}, as we show in the next section.

		Note also that, at least formally, the terms involving $\varphi^N_{xx}(t,\pm1)$ in the right hand side of the observability inequality \eqref{eq2441-d}  should disappear as $N\to \infty$, since for the backwards wave equation \eqref{eq41-d} $\varphi_{xx}(t,\pm 1)=0$. However, if we remove these terms in \eqref{eq2441-d} the constant $C$ cannot be chosen to be uniform in $N\to \infty$. In fact,  for any $ T>0 $, we have 
		\begin{align}\label{nouniform}
			\sup_{(\phi^{0,N},\phi^{1,N})\in \mathbb{P}^{Di}_N\times\mathbb{P}^{Di}_N}\dfrac{\left\| (\phi^{0,N}_x,\phi^{1,N})\right\|^{2}_{N\times N}}{\int_{0}^{T}\left| \phi^{N}_{x}(t,1)\right|^{2} dt}\longrightarrow \infty\ \mbox{as}\ N\longrightarrow\infty.
		\end{align}
		Two spectral properties are at the origin of this lack of uniformity. One one hand the high frequency modes associated to the corresponding spectral collocation second-order differential operator
		\begin{align}\label{eigp11}
			\begin{split}
				\begin{cases}
					\varphi^{N}_{xx}(x_{i})+\lambda^{N}\varphi^{N}(x_{i})=0,&\ \mbox{ at } \ x_{i}\in C^{\Omega} \\
					\varphi^{N}(-1)=\varphi^{N}(1)=0.
				\end{cases}
			\end{split}
		\end{align}
		The eigenvalues associated to the continuous problem are given by $\lambda_k=(k \pi/2)^2$ and they satisfy a spectral gap property
		$$
		\sqrt{\lambda_{k+1}}-\sqrt{\lambda_k}=\pi/2 >0. 
		$$ 
		However, a numerical dispersion appears in the discrete eigenvalue problem and the analogous spectral gap is only true for the low frequencies. In figure \ref{fig2} we show the behavior of the square root of eigenvalues $\sqrt{\lambda_k^N}$ associated to \eqref{eigp11}. 
		\begin{figure}[h]
			\centering
			\includegraphics[width=12cm]{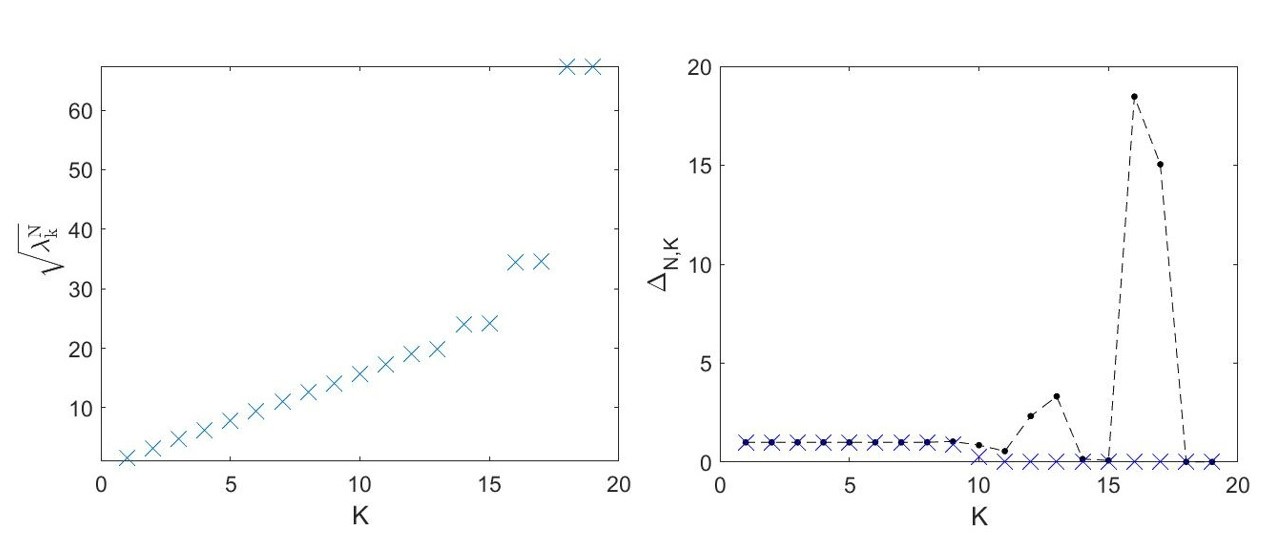}\\
			\caption{The left figure clarify the eigenvalues of \eqref{eigp11} for $ N=20 $ and the right one illustrate the behavior of the ratio \eqref{obs_rat} versus $ k $ for $ N=20 $ (dots in black), and the reinforced observability \eqref{obs_rat2} (crosses in blue)}
			\label{fig2}
		\end{figure}
		
		We observe that there are two families of eigenvalues, corresponding to even and odd eigenfunctions, that become closer for large frequencies. In particular, we find numerically 
		$$
		\sqrt{\lambda^{N}_{2k+1}}-\sqrt{\lambda^{N}_{2k}}\sim N^{-1.6}. 
		$$ 
		This lack of a uniform spectral gap for the discrete problem is one of the reasons to have \eqref{nouniform} (we refer to \cite{bib20} where this is analyzed in detail for other numerical approximations). This phenomena was already observed in \cite{bib19}.
		
		On the other hand, there is another spectral property of the continuous problem that is lost in the discretization and justifies \eqref{nouniform}. When considering particular solutions containing one single discrete eigenfunction $\varphi_k^{N}$ the left hand side in \eqref{nouniform} can be interpreted as an observability quotient for the eigenfunctions. This is uniformly bounded for all the eigenfunctions associated to the continuous eigenvalue problem but in the discrete case this property is lost. In particular, we have numerically 
		\begin{equation} \label{obs_rat}
			\max_{1\leq k\leq N}\dfrac{\sum_{i\in I} \omega_i \left| \varphi^N_{k,x}(x_i)\right| ^2}{\left| \varphi^N_{k,x}(1)\right| ^2}\sim N^{1.7}. 
		\end{equation}
		It turns out that, when considering the reinforced observation that includes the second derivative in space at the extremes we recover this uniformity, 		
		\begin{equation} \label{obs_rat2}
			\sup_{1\leq k\leq N}\dfrac{\sum_{i\in I} \omega_i\left| \varphi^N_{k,x}(x_i)\right| ^2}{\left| \varphi^N_{k,x}(1)\right| ^2+\omega_{N}\left| \varphi^N_{k,xx}(1)\right| ^2+\omega_{0}\left| \varphi^N_{k,xx}(-1)\right| ^2}\le C.
		\end{equation}
		In Figure \ref{fig2}, we show  numerically that this constant $ C\leq 1 $.
	\end{remark}
	
	\subsection{Convergence of the discrete controls: proof of Theorem \ref{thcon1-d}}	
	
	We now prove the convergence result mentioned in Theorem \ref{thcon1-d}. All along this section we assume that the hypotheses of the theorem hold. 
	To clarify the exposition we proceed in 3 steps where we first define the sequence $(u^{0,N},u^{1,N})\in (\mathbb{P}^{Di}_{N}(\Omega))^2$ and prove the uniform boundedness of the associated sequence of controls, then characterize their weak limit and finally prove the strong convergence respectively. 
	
	{\bf Step 1: Uniform bound of the controls.}	 We first state the following result that we prove in the Appendix below. 
	\begin{lemma}\label{dataconv}
		Given $(u^{0},u^{1})\in L^{2}(\Omega)\times H^{-1}(\Omega)  $, there exists a sequence $(u^{0,N},u^{1,N})\in (\mathbb{P}^{Di}_{N}(\Omega))^2 $ such that 
		\begin{align}\label{ini}
			(u^{0,N},u^{1,N})\longrightarrow	(u^{0},u^{1})\ \mbox{in}\ L^{2}(\Omega)\times H^{-1}(\Omega).
		\end{align}
		On the other hand, for any $(\phi^{0,N},\phi^{1,N}) \in \mathbb{P}^{Di}_{N}(\Omega)\times \mathbb{P}^{Di}_{N}(\Omega)$ we have
		\begin{equation}\label{2cond}
			\left| \big<(\phi^{0,N},\phi^{1,N}),(u^{0,N},u^{1,N})\big>_{N} \right| \leq  |(u^0,u^1)|_{L^2\times H^{-1}} \| (\phi^{0,N}_x,\phi^{1,N}) \|_{N\times N}.
		\end{equation}
		Furthermore, if $(\phi^{0,N},\phi^{1,N})\to (\phi^0,\phi^1)$ in $H^1_0(\Omega)\times L^2(\Omega)$ then,
		\begin{equation}\label{2cond_b}
			\big<(\phi^{0,N},\phi^{1,N}),(u^{0,N},u^{1,N})\big>_{N} \to  \big< (\phi^0,\phi^1),(u^0,u^1) \big> ,
		\end{equation}		
		where
		\begin{align}\label{data2}
			\big<(\phi^0,\phi^1),(u^0,u^1)\big>=<u^1,\phi^0 dx>_{H^{-1},H^1_0}-\int_{-1}^1u^0\phi^1 dx,
		\end{align}  
		and $<\cdot,\cdot>_{H^{-1},H^1_0}$ is the duality product between $ H^{-1} $ and $ H^{1}_0  $.
	\end{lemma}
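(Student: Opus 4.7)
The plan is to define $u^{0,N}$ and $u^{1,N}$ through a discrete Riesz-type duality with the target data $(u^0,u^1)$, so that \eqref{2cond} and the pairing convergence \eqref{2cond_b} follow almost immediately from the construction, and only the strong convergence \eqref{ini} requires genuine work. Since $(\cdot,\cdot)_N$ is an inner product on $\mathbb{P}_N^{Di}(\Omega)$ by the norm equivalence \eqref{eq10}, I will define $u^{0,N},u^{1,N}\in\mathbb{P}_N^{Di}(\Omega)$ as the unique polynomials satisfying
\begin{align*}
(u^{0,N},p)_N=\int_{-1}^{1}u^0\,p\,dx,\qquad (u^{1,N},p)_N=\langle u^1,p\rangle_{H^{-1},H_0^1},\qquad \forall\,p\in\mathbb{P}_N^{Di}(\Omega).
\end{align*}

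Testing the first relation with $p=\phi^{1,N}$ and applying Cauchy--Schwarz in $L^2$ together with the lower inequality $|\cdot|_{L^2}\le\|\cdot\|_N$ in \eqref{eq10} gives $|(u^{0,N},\phi^{1,N})_N|\le |u^0|_{L^2}\,\|\phi^{1,N}\|_N$; testing the second with $p=\phi^{0,N}$ and using the $H^{-1}$--$H_0^1$ duality gives $|(u^{1,N},\phi^{0,N})_N|\le |u^1|_{H^{-1}}\,\|\phi^{0,N}_x\|_N$. A Cauchy--Schwarz in $\mathbb{R}^2$ combines these into \eqref{2cond}, and the same substitutions rewrite the discrete pairing as $\langle u^1,\phi^{0,N}\rangle_{H^{-1},H_0^1}-\int_{-1}^{1}u^0\phi^{1,N}\,dx$, so that its continuity in $\phi^{0,N}\in H_0^1$ and $\phi^{1,N}\in L^2$ yields \eqref{2cond_b}.

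The main obstacle is \eqref{ini}. For the $L^2$ statement I compare $u^{0,N}$ with the $L^2$-orthogonal projection $v^{0,N}$ of $u^0$ onto $\mathbb{P}_N^{Di}(\Omega)$, which tends to $u^0$ in $L^2$ by density of $\bigcup_N\mathbb{P}_N^{Di}$ in $L^2(\Omega)$. Galerkin orthogonality yields $\int v^{0,N}p\,dx=\int u^0 p\,dx$ for $p\in\mathbb{P}_N^{Di}$, and subtracting from the defining identity of $u^{0,N}$ produces
\[
(u^{0,N}-v^{0,N},p)_N=\int_{-1}^{1} v^{0,N}p\,dx-(v^{0,N},p)_N,\qquad\forall\,p\in\mathbb{P}_N^{Di}(\Omega),
\]
whose right-hand side is the LGL quadrature error on $v^{0,N}$. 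By exactness of LGL on $\mathbb{P}_{2N-1}$ this error is annihilated when $v^{0,N}$ is replaced by any polynomial of degree at most $N-1$, so it is bounded by $(C_2-1)\,\mathrm{dist}_{L^2}(v^{0,N},\mathbb{P}_{N-1})\,|p|_{L^2}$, and this distance tends to zero because $v^{0,N}\to u^0$ in $L^2$ and the Legendre partial sums of $u^0$ converge in $L^2$. Testing with $p=u^{0,N}-v^{0,N}$ and using \eqref{eq10} once more closes the $L^2$ case. A parallel argument, using the $H_0^1$-orthogonal projection of an $H_0^1$-primitive of $u^1$ in place of $v^{0,N}$ and the analogous quadrature estimate in the $H^{-1}/H_0^1$ duality, gives $u^{1,N}\to u^1$ in $H^{-1}$; this last step is the most delicate piece, since the discrete pairing $(u^{1,N},\cdot)_N$ has to be matched with a continuous pairing against $H_0^1$ test functions and the relevant quadrature bound does not tend to zero uniformly. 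Finally, when $u^0$ is continuous with $u^0(\pm 1)=0$ the LGL interpolant lies in $\mathbb{P}_N^{Di}(\Omega)$ and converges uniformly to $u^0$ by classical spectral interpolation theory, which gives the last assertion.
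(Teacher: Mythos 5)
Your construction is genuinely different from the paper's: you define $(u^{0,N},u^{1,N})$ as discrete Riesz representers of the functionals $p\mapsto\int u^0p\,dx$ and $p\mapsto\langle u^1,p\rangle_{H^{-1},H^1_0}$ with respect to $(\cdot,\cdot)_N$, whereas the paper expands $(u^0,u^1)$ in the continuous eigenbasis and defines $(u^{0,N},u^{1,N})=\sum_{|k|\leq r(N)}\hat b_k\hat\Phi_k^N$ as a \emph{low-frequency} truncation ($r(N)=\alpha N^{1/8}$) re-expanded in the discrete eigenfunctions, so that everything reduces to the uniform low-mode spectral convergence of Theorem \ref{theigen1-d}. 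Your route makes \eqref{2cond} and \eqref{2cond_b} essentially tautological — the discrete pairing becomes exactly $\langle u^1,\phi^{0,N}\rangle_{H^{-1},H^1_0}-\int u^0\phi^{1,N}dx$, and \eqref{2cond} follows from \eqref{eq10} with $C_1=1$ plus Cauchy--Schwarz — which is cleaner than the paper's coefficient-by-coefficient argument. Your $L^2$ argument for $u^{0,N}\to u^0$ is also sound: the aliasing error $\int v^{0,N}p-(v^{0,N},p)_N$ is indeed controlled by $\mathrm{dist}_{L^2}(v^{0,N},\mathbb{P}_{N-1})$ times $|p|_{L^2}$ (up to the constant $1+C_2$ rather than $C_2-1$, but that is immaterial), and testing with $p=u^{0,N}-v^{0,N}$ closes it.

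The genuine gap is the $H^{-1}$ convergence of $u^{1,N}$, and your own closing caveat identifies it without resolving it. The ``parallel argument'' does not parallel: to estimate $|u^{1,N}-u^1|_{H^{-1}}$ you must test against the whole unit ball of $H^1_0$, and for $\psi\in H^1_0$ the error splits as $\int u^{1,N}(\psi-p)+\bigl(\int u^{1,N}p-(u^{1,N},p)_N\bigr)+\langle u^1,p-\psi\rangle$ for $p\in\mathbb{P}_N^{Di}$. The first and third terms require $\inf_p|\psi-p|_{H^1_0}$ to be small \emph{uniformly} over $|\psi|_{H^1_0}\leq1$, which fails (the relevant width does not tend to zero), and the middle aliasing term is controlled by $\mathrm{dist}_{L^2}(u^{1,N},\mathbb{P}_{N-1})$ — but $u^{1,N}$ is not even bounded in $L^2$ when $u^1$ is merely in $H^{-1}$ (think of $u^1$ a Dirac mass, whose representer is a discrete reproducing kernel with growing $L^2$ norm), so the mechanism that worked for $u^{0,N}$ has no analog. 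A possible repair is a density argument — prove the map $u^1\mapsto u^{1,N}$ is uniformly bounded from $H^{-1}$ to $H^{-1}$ and invoke convergence on the dense subset $L^2$, where your $L^2$ argument applies — but that uniform bound is itself nontrivial and is exactly the kind of statement the paper's low-frequency truncation is designed to avoid: by keeping only $|k|\leq\alpha N^{1/8}$ modes, the paper works entirely in the regime where the discrete eigenpairs converge uniformly, and never has to control the badly-dispersed high discrete frequencies that your representer $u^{1,N}$ implicitly contains.
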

	
	\begin{remark}
		When $u^0$ and $u^1$ are continuous functions the polynomials $\tilde u^{0,N},\tilde u^{1,N}\in \mathbb{P}_N^{Di}$ that coincides with $u^0$ and $u^1$ at the nodes $x_i\in C^{\Omega}$ give a discretization that satisfies 
		$$
		\tilde u^{0,N}\to u^0, \quad \tilde u^{1,N}\to u^1,\quad \mbox{ in $L^2(-1,1)$,}
		$$
		as $N\to \infty$. In this case the result in Lemma \ref{dataconv} is still true when considering 
		$(u^{0,N}, u^{1,N})=(\tilde u^{0,N}, \tilde u^{1,N})$.
	\end{remark}
	
	We choose $(u^{0,N},u^{1,N})\in (\mathbb{P}^{Di}_{N}(\Omega))^2$ as in this lemma. Note that in particular this sequence is uniformly bounded in $L^2\times H^{-1}$ as $N\to \infty$ and the boundedness of the associated controls is a direct consequence of the following result:
	\begin{proposition}\label{probound1-d}
		Let $f^N,$ $g_L^N$, $g_R^N$ be the controls defined in \eqref{eq181-d}. Then, there exists a constant $ M>0 $, independent of $N$, such that 
		\begin{align}\label{boun1-d}
			\begin{split}
				&\int_{0}^{T}\left|f^{N}(t) \right| ^{2}dt\
				+\int_{0}^{T}\left|g^{N}_{R}(t) \right| ^{2}dt
				+\int_{0}^{T}\left|g^{N}_{L}(t) \right| ^{2}dt\le M\left| (u^{0},u^{1})\right|^{2}_{L^2\times H^{-1}}.
			\end{split}
		\end{align}
	\end{proposition}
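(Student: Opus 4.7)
The plan is to exploit the duality between the minimizer of $J^N$ and the controls it produces, combine the optimality condition with the observability inequality from Lemma \ref{definv1-d}, and then use Lemma \ref{dataconv} to bound the duality product on the right hand side.

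First, I would record the Euler--Lagrange identity for the minimizer $(\hat{\phi}^{0,N},\hat{\phi}^{1,N})$. Testing $DJ^N(\hat{\phi}^{0,N},\hat{\phi}^{1,N})=0$ against the minimizer itself gives
\begin{align*}
& \int_0^T \eta(t)\left|\hat{\phi}^N_x(t,1)-\omega_N \hat{\phi}^N_{xx}(t,1)\right|^2 dt + \int_0^T \eta(t)\omega_N |\hat{\phi}^N_{xx}(t,1)|^2 dt \\
& \qquad +\int_0^T \eta(t)\omega_0 |\hat{\phi}^N_{xx}(t,-1)|^2 dt
=\bigl<(\hat{\phi}^N(0,\cdot),\hat{\phi}^N_t(0,\cdot)),(u^{0,N},u^{1,N})\bigr>_N.
\end{align*}
Since $0\le\eta\le 1$ implies $\eta^2\le\eta$, the definitions in \eqref{eq181-d} give
\[
\int_0^T\bigl(|f^N|^2+|g^N_R|^2+|g^N_L|^2\bigr)\,dt \;\le\; \bigl<(\hat{\phi}^N(0,\cdot),\hat{\phi}^N_t(0,\cdot)),(u^{0,N},u^{1,N})\bigr>_N.
\]

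Next I would bound the right hand side by using estimate \eqref{2cond} from Lemma \ref{dataconv} together with the conservation of the discrete energy $E^N$ defined in \eqref{eq_energy}. The latter is obtained as in the proof of Lemma \ref{definv1-d} by multiplying \eqref{eq1111-d} by $\omega_i\phi^N_t$ and summing in $i\in I$; it yields
\[
\|(\hat{\phi}^N_x(0,\cdot),\hat{\phi}^N_t(0,\cdot))\|_{N\times N} \;=\; \|(\hat{\phi}^{0,N}_x,\hat{\phi}^{1,N})\|_{N\times N}.
\]
Combining these two ingredients,
\[
\int_0^T\bigl(|f^N|^2+|g^N_R|^2+|g^N_L|^2\bigr)dt \;\le\; |(u^0,u^1)|_{L^2\times H^{-1}}\,\|(\hat{\phi}^{0,N}_x,\hat{\phi}^{1,N})\|_{N\times N}.
\]

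Now I would close the argument by inserting the observability inequality of Lemma \ref{definv1-d}. The subtle point is that the left hand side of the optimality identity carries the weight $\eta$, whereas the observability inequality uses plain $L^2(0,T)$ norms on the boundary. I would handle this by choosing $\delta>0$ small enough so that $T-4\delta>4(2+N^{-1})$ for all $N$ of interest (which is possible since $T>4(2+N^{-1})$ by hypothesis and, for $N$ large, $4(2+N^{-1})$ is bounded above by a constant strictly less than $T$). Applying Lemma \ref{definv1-d} on the time interval $[2\delta,T-2\delta]$, and using that $\eta\equiv 1$ there, I obtain
\begin{align*}
C\,\|(\hat{\phi}^{0,N}_x,\hat{\phi}^{1,N})\|_{N\times N}^{2}
&\le \int_{2\delta}^{T-2\delta}\!\!\bigl(|\hat{\phi}^N_x(t,1)-\omega_N\hat{\phi}^N_{xx}(t,1)|^2+\omega_N|\hat{\phi}^N_{xx}(t,1)|^2+\omega_0|\hat{\phi}^N_{xx}(t,-1)|^2\bigr)dt \\
&\le \bigl<(\hat{\phi}^N(0,\cdot),\hat{\phi}^N_t(0,\cdot)),(u^{0,N},u^{1,N})\bigr>_N \\
&\le |(u^0,u^1)|_{L^2\times H^{-1}}\,\|(\hat{\phi}^{0,N}_x,\hat{\phi}^{1,N})\|_{N\times N},
\end{align*}
with $C$ independent of $N$ (since the observability time $T-4\delta$ stays uniformly above $4(2+N^{-1})$). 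Dividing by $\|(\hat{\phi}^{0,N}_x,\hat{\phi}^{1,N})\|_{N\times N}$ and re-inserting the resulting bound into the control estimate yields \eqref{boun1-d} with $M=1/C$.

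The main obstacle is the book-keeping of the cutoff $\eta$: a careless reading would apply the observability estimate directly to the $\eta$-weighted expression, losing the factor that lets us absorb $\eta^2\le\eta$. Shifting the observability to the subinterval $[2\delta,T-2\delta]$ is the clean way to reconcile this, and it is precisely here that the requirement $T>4(2+N^{-1})$ (with a small margin available for $\delta$) is used so that the constant $C$ in Lemma \ref{definv1-d} stays uniform in $N$.
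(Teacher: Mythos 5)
Your argument is correct and follows essentially the same route as the paper: optimality of the minimizer of $J^N$ (you use the Euler--Lagrange identity tested against the minimizer itself, the paper uses $J^N(\hat{\phi}^{0,N},\hat{\phi}^{1,N})\le J^N(0,0)=0$, which differ only by a harmless factor), then the duality bound \eqref{2cond} combined with conservation of the discrete energy, and finally the uniform observability inequality of Lemma \ref{definv1-d} to absorb $\|(\hat{\phi}^{0,N}_x,\hat{\phi}^{1,N})\|_{N\times N}$. Your explicit treatment of the cutoff $\eta$ (restricting the observability estimate to $[2\delta,T-2\delta]$ where $\eta\equiv 1$, at the price of a slightly shorter observation time that must still exceed $4(2+N^{-1})$ uniformly in $N$) is a point the paper passes over silently, and it is handled correctly here.
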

	\begin{proof}[\textbf{Proof.}] As $ \hat{\phi^{N}}  $ is the solution of the  discrete adjoint system \eqref{eq1111-d} associated to the the minimizer $ (\hat{\phi}^{0,N},\hat{\phi}^{1,N}) $ of $ J^{N} $ in $ \mathbb{P}^{Di}_{N}(\Omega)\times \mathbb{P}^{Di}_{N}(\Omega) $, we have in particular
		\begin{align}
			J^{N}(\hat{\phi}^{0,N},\hat{\phi}^{1,N})\le J^{N}(0,0)=0.
		\end{align}
		Then, taking into account the conservation of the discrete energy, defined in \eqref{eq_energy}, the approximation in \eqref{ini}, estimate \eqref{2cond} and the uniform observability inequality in Lemma \ref{definv1-d} we obtain
		\begin{align*}
			&\int_{0}^{T}\eta(t)\left| \hat{\phi}^{N}_{x}(t,1)\; dt -\omega_{N}\hat{\phi}^{N}_{xx}(t,1)\right| ^{2} dt+\int_0^T\eta(t)\omega_{N}\left| \hat{\phi}^{N}_{xx}(t,-1)\right| ^{2} dt \\
			& \quad +\int_0^T\eta(t)\omega_{N}\int_{0}^{T}\left| \hat{\phi}^{N}_{xx}(t,1)\right| ^{2} dt \leq C\left|(u^{0},u^{1}) \right|_{L^2\times H^{-1}}^2,
		\end{align*}
		which is equivalent to \eqref{boun1-d}.
	\end{proof}
	
	\textbf{Step 2: Weak convergence of the control $f^N$.} Thanks to the
	bound \eqref{boun1-d} controls $ f^{N},g_{R}^{N},g^{N}_{L} $ are uniformly bounded in $ L
	^{2}(0,T) $ and therefore, there exists a subsequence, still denoted by $ f^{N},g_{R}^{N},g^{N}_{L} $, such that
	\begin{align}\label{weakc}
		f^{N}\underset{N\to\infty}{\rightharpoonup} h,\ g_{R}^{N}\underset{N\to\infty}{\rightharpoonup} h_{R},\ g_{L}^{N}\underset{N\to\infty}{\rightharpoonup} h_{L}, \  \mbox{ weakly in } \  L^{2}(0,T).
	\end{align}
	
	Let us see that $ h = f $ where $ f $ is the control with minimal $L^2-$weighted norm of system \eqref{eq1-d}, with the weigth function $\eta(t)$. In the next step we show that $h_R=h_L=0$.  This control $ f(t) $ can be characterized by the following two properties (see \cite{bib6}):
	\begin{enumerate}
		\item[(P1)] $ f $ satisfies the variational characterization of controls, i.e. 
		\begin{align}\label{varc}
			\begin{split}
				0=\int_{0}^{T}\phi_{x}(t,1)f(t) dt-\big<(\phi(0,\cdot),\phi_{t}(0,\cdot)),(u^0,u^1)\big>,
				&\; \; \forall (\phi^{0},\phi^{1}) \in H_{0}^{1}\times L^{2},\\
			\end{split}
		\end{align} 
		where $ \left\langle \cdot,\cdot\right\rangle  $ is  defined in \eqref{data2} and $ \phi $ is the solution of 
		\begin{align}\label{eq41-d}
			\begin{split}
				\begin{cases}
					\phi_{tt}-\phi_{xx}=0&\mbox{in}\ (t,x)\in (0,T)\times\Omega\\
					\phi(t,1)=\phi(t,-1)=0&\mbox{in}\ t\in (0,T)\\
					\phi(T,x)=\phi^{0},\quad \phi_{t}(T,x)=\phi^{1}&\mbox{in}\ x\in  \Omega.
				\end{cases}
			\end{split}
		\end{align}
		
		\item[(P2)] $ f(t) = \eta(t)\hat{\phi}_{x}(t,1)  $ where $ \hat{\phi} $ is a solution of adjoint continue problem \eqref{eq41-d}  with final data $ (\hat{\phi}^{0},\hat{\phi}^{1}) $.
	\end{enumerate}
	
	In what follows we see that $ h $ verifies these two properties. We start with the second one. 		
	By the boundedness of controls, the estimate \eqref{eq2441-d} and the norm equivalence in \eqref{eq10}, we deduce that $ (\hat{\phi}^{0,N},\hat{\phi}^{1,N}) $ is uniformly bounded in $ H_{0}^{1}\times L^{2} $. So, we can extract a sub-sequence,
	still denoted $ (\hat{\phi}^{0,N},\hat{\phi}^{1,N}) $ such that
	\begin{align}
		(\hat{\phi}^{0,N},\hat{\phi}^{1,N})\rightharpoonup (\hat{\phi}^{0},\hat{\phi}^{1})\ weakly \ in\ H_{0}^{1}(\Omega)\times L^{2}(\Omega).
	\end{align}
	Let $\hat{\phi}^{N}$ and $\hat{\phi}$ be the solutions of the discrete adjoint system \eqref{eq1111-d} and the continuous one \eqref{eq41-d}, associated to the final data $ (\hat{\phi}^{0,N},\hat{\phi}^{1,N}) $ and $ (\hat{\phi}^{0},\hat{\phi}^{1}) $ respectively. The following holds: 
	\begin{eqnarray}\label{str}
		&& \hat{\phi}^{N} \rightharpoonup\hat{\phi}, \mbox{ weakly-* in } \ L^{\infty}(0,T;H_{0}^{1}(\Omega))\cap W^{1,\infty}(0,T;L^{2}(\Omega)), \\ \label{str2}
		&&  \hat{\phi}^{N}_x(t,1) -\omega_{N}\hat{\phi}^{N}_{xx}(t,1) \rightharpoonup\hat{\phi}_x(t,1), \mbox{ weakly in }\ L^{2}(0,T).
	\end{eqnarray}
	
	The convergence result \eqref{str} is easily deduced from the classical theory of spectral approximation in \cite{bib5} (Section 10.5). Concerning \eqref{str2}, we write system \eqref{eq1111-d} in a weak form and take as test functions $\psi^{N}(x)=\frac{1+x}{2}\in \mathbb{P}_{N}$ and $l(t)\in C^1_0(0,T)$. 	
	Multiplying the equations \eqref{eq1111-d} by the weights $w_i$ and $ \psi^{N}(x_{i})l(t) $, adding in $i\in I$ and integrating in  time  we easily obtain the following identity, 	
	\begin{align}\label{fv}
		\begin{split}
			0&= -\int_0^T (\hat{\phi}^{N}_{tt}(t,\cdot), \frac{1+x}{2})_N l(t)dt + \int_0^T \int_{-1}^1\hat{\phi}^{N}_{xx}(t,x) \frac{1+x}{2} l(t)dt\\
			&\hspace*{0.5cm}-\int_{0}^{T}\omega_{N}\hat{\phi}^{N}_{xx}(t,1)) l(t)dt
			\\
			&=\int_{0}^{T}\int_{-1}^1\hat{\phi}^{N}_{t}(t,x) \frac{1+x}{2} l_{t}(t)dt-\frac12 \int_{0}^{T}\int_{-1}^{1}\hat{\phi}^{N}_{x}(t,x) l(t) dt dx\\&\hspace*{0.5cm}+\int_{0}^{T}(\hat{\phi}^{N}_{x}(t,1)-\omega_{N}\hat{\phi}^{N}_{xx}(t,1)) l(t)dt,\ \forall l(t) \in C_{0}^{1}(0, T).\\
		\end{split}
	\end{align}
	Note that in the second term on the right hand side we have used the quadrature formula since the integrand is a polynomial of degree $2N-2$, and this allowed us to integrate by parts in the $x$ variable.
	
	We can pass to the limit in \eqref{fv} thanks to \eqref{weakc} and \eqref{str}. 
	Then $ \hat{\phi} $ verifies
	\begin{align}\label{fv1}
		\begin{split}
			0=&\int_{0}^{T}\int_{-1}^{1}\hat{\phi}_{t}(x)\frac{1+x}{2} l_{t}(t)dx dt-\frac12 \int_{0}^{T}\int_{-1}^{1}\hat{\phi}_{x}(t,x) l(t)dx dt+\int_{0}^{T}\tilde h(t) l(t)dt,
		\end{split}
	\end{align}
	for all  $l(t)\in C_{0}^{1}(0, T). $ Here $\tilde h$ is the weak limit of $\hat{\phi}^{N}_{x}(t,1)-\omega_{N}\hat{\phi}^{N}_{xx}(t,1)$. 
	On the other hand, as $ \hat{\phi} $ is a solution of \eqref{eq41-d}, it also verifies
	\begin{align}\label{fv2}
		\begin{split}
			0=\int_{0}^{T}\int_{-1}^{1}\hat{\phi}_{t}(t,x)\frac{1+x}{2} l_{t}(t)dx dt-\frac12\int_{0}^{T}\int_{-1}^{1}\hat{\phi}_{x}(t,x) l(t)dx dt+\int_{0}^{T}\hat{\phi}_{x}(t,1) l(t)dt,
		\end{split}
	\end{align}
	for all $l(t) \in C_{0}^{1}(0, T). $ From \eqref{fv1} and \eqref{fv2} we finally deduce 
	\[\int_{0}^{T}\tilde h(t) l(t)dt=\int_{0}^{T}\hat{\phi}_{x}(t,1) l(t)dt,\ \forall \ l(t) \in C_{0}^{1}(0, T),\]
	and then $ \tilde h(t) = \hat{\phi}_{x}(t,1) $ with $ \hat{\phi} $ the solution of \eqref{eq41-d}. This finishes the proof of \eqref{str2} and in particular that $h$ satisfies property (P2) above.
	
	Now, we check that the weak limit of $f^N(t)$, $ h(t) $, also verifies the first property (P1) above.
	We need the following lemma that we prove in the appendix below:
	\begin{lemma}\label{lemcon1-d}
		Given $(\phi^{0},\phi^{1})\in H_{0}^{1}(\Omega)\times L^{2}(\Omega)  $, there exists a sequence $(\phi^{0,N},\phi^{1,N})\in \mathbb{P}^{Di}_{N}(\Omega)\times \mathbb{P}^{Di}_{N}(\Omega) $ such that 
		\begin{align}\label{eq28} 
			(\phi^{0,N},\phi^{1,N})\longrightarrow	(\phi^{0},\phi^{1}),\ in \ H_{0}^{1}(\Omega)\times L^{2}(\Omega).
		\end{align}
		Furthermore, if $ \phi^{N} $ is the solution of \eqref{eq1111-d} with final data $ (\phi^{0,N},\phi^{1,N}) $ and $ \phi $ is the solution
		of \eqref{eq41-d} with final data $ (\phi^{0},\phi^{1}) $ the following holds:
		\begin{align}\label{eqa31}
			\phi^{N} \longrightarrow &\phi, \ in \ C([0,T];H_{0}^{1}(\Omega))\cap C^{1}([0,T];L^{2}(\Omega)),
		\end{align} 
		\begin{align}\label{eqa32}  
			\phi^{N}_{x}(t,1) \longrightarrow \phi_{x}(t,1),\ in \ L^{2}(0,T),
		\end{align}
		\begin{align}\label{eqa323}  
			\sqrt{w_{N}}\phi^{N}_{xx}(t,\pm 1) \longrightarrow 0, \ in \ L^{2}(0,T),
		\end{align} 
	\end{lemma}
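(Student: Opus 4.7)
The plan is to organize the proof in four steps: (i) construction of the approximating data, (ii) uniform bounds in the energy space, (iii) identification of the weak limit as $\phi$, and (iv) upgrade to the three strong convergences.

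\textbf{Step (i).} For the existence of $(\phi^{0,N},\phi^{1,N})\in\mathbb{P}_N^{Di}\times\mathbb{P}_N^{Di}$ satisfying \eqref{eq28}, I would invoke the classical polynomial approximation results that will be gathered in the Appendix: $\mathbb{P}_N^{Di}(\Omega)$ is dense in $H^1_0(\Omega)$ with an explicit rate (e.g.\ by taking the $H^1_0$-orthogonal projection of $\phi^0$), and similarly dense in $L^2(\Omega)$ (via the $L^2$-orthogonal projection of $\phi^1$).

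\textbf{Step (ii).} Multiplying the first equation of \eqref{eq1111-d} by $\omega_i\phi^N_t(t,x_i)$ and summing in $i\in I$ shows that the discrete energy $E^N(t)=\tfrac12(\|\phi^N_t\|_N^2+\|\phi^N_x\|_N^2)$ is conserved, exactly as in the proof of Lemma \ref{definv1-d}. Combining conservation with the norm equivalence \eqref{eq10} and \eqref{eq28}, the sequence $\{\phi^N\}$ is uniformly bounded in $L^\infty(0,T;H^1_0(\Omega))\cap W^{1,\infty}(0,T;L^2(\Omega))$. Weak-$\ast$ compactness then yields a subsequence with $\phi^N\stackrel{\ast}{\rightharpoonup}\tilde\phi$ in that space.

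\textbf{Step (iii).} To identify $\tilde\phi=\phi$, I would use the equivalent continuous form \eqref{eq151-d}. Testing \eqref{eq151-d} against $\psi(x)l(t)$ with $\psi\in H^1_0\cap C^\infty(\bar\Omega)$ and $l\in C^1_0(0,T)$, the error terms contribute via $\int_{-1}^1\Psi_0\psi\,dx$ and $\int_{-1}^1\Psi_N\psi\,dx$, which are $O(N^{-1})$ since $\|\Psi_0\|_{L^2}^2\le\|\Psi_0\|_N^2=\omega_0$ and likewise for $\Psi_N$. Because $\sqrt{\omega_{0/N}}\,\phi^N_{xx}(t,\pm1)$ is bounded in $L^2(0,T)$ (by the direct energy identity used in the proof of Lemma \ref{definv1-d}, read in the forward direction), the right-hand side vanishes in $\mathcal{D}'((0,T)\times\Omega)$. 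Reasoning as in the derivation of \eqref{fv}--\eqref{fv2}, $\tilde\phi$ solves \eqref{eq41-d} with data $(\phi^0,\phi^1)$, so by uniqueness $\tilde\phi=\phi$ and the whole sequence converges weakly-$\ast$.

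\textbf{Step (iv).} To upgrade to strong convergence, I would use energy balance: by \eqref{eq28} and the convergence of the LGL quadrature on the smooth sequences built in step (i), $E^N(0)\to E(0)$. Combined with weak-$\ast$ convergence and the fact that the energy is a norm on $H^1_0\times L^2$, one deduces \eqref{eqa31}. For \eqref{eqa32}, I would establish a \emph{uniform} direct trace inequality $\|\phi^N_x(\cdot,1)\|_{L^2(0,T)}\le C\,E^N(0)^{1/2}$ by multiplying \eqref{eq151-d} by $(x+1)\phi^N_x$ (mirroring the multiplier step \eqref{inv1-d}--\eqref{lions1} in Lemma \ref{definv1-d} but keeping the boundary trace on the right), applied to the differences $\phi^N-\phi^M$; together with \eqref{eqa31} this yields convergence in $L^2(0,T)$. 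The key obstacle will be \eqref{eqa323}: from the same multiplier identity one gets a uniform bound $\sqrt{\omega_N}\,\|\phi^N_{xx}(\cdot,\pm1)\|_{L^2(0,T)}\le C\,E^N(0)^{1/2}$, but to obtain \emph{convergence to zero} I would proceed by density, first showing the limit for smoother data $(\phi^0,\phi^1)\in(H^2\cap H^1_0)\times H^1_0$ (where the continuous solution satisfies $\phi_{xx}(t,\pm1)=\phi_{tt}(t,\pm1)=0$ and the discrete approximation inherits enough regularity through interpolation estimates for $\phi^N_{xx}$), and then extending to arbitrary $(\phi^0,\phi^1)\in H^1_0\times L^2$ via the uniform bound combined with an $\varepsilon/3$ argument.
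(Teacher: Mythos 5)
Your Steps (i)--(ii) are unobjectionable, but the substance of the lemma is \eqref{eqa32}--\eqref{eqa323}, and there your argument hinges on a uniform direct (hidden--regularity) inequality
\[
\int_0^T\left| \phi^{N}_{x}(t,1)\right|^{2}dt+\omega_{N}\int_0^T\left| \phi^{N}_{xx}(t,\pm1)\right|^{2}dt\le C\, E^{N},
\]
valid for \emph{arbitrary} final data in $\mathbb{P}^{Di}_{N}(\Omega)\times\mathbb{P}^{Di}_{N}(\Omega)$, which is neither proved in the paper nor obtainable from the multiplier computation you invoke. In the identity \eqref{inv1-d}--\eqref{lions1} the traces $\phi^{N}_{xx}(t,\pm1)$ enter only through the source term of the error equation \eqref{eq151-d}: the multiplier $(x+1)\phi^{N}_{x}$ produces the trace $\phi^{N}_{x}(t,1)$ but offers no mechanism to generate, or absorb into the energy, the quantities $\omega_{N}|\phi^{N}_{xx}(t,\pm1)|^{2}$, so the ``direct'' version of the identity still carries them as uncontrolled terms. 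The only quantitative control of the boundary residual available in the paper is estimate \eqref{3f} of Theorem \ref{theigen1-d}, and it holds only for the low discrete modes $k\le r(N)=\alpha N^{1/8}$; for the high discrete frequencies no such bound is established (and the discussion around \eqref{obs_rat}--\eqref{obs_rat2} indicates their boundary behaviour is precisely where the scheme degenerates). Consequently your density/$\varepsilon/3$ argument for \eqref{eqa323} cannot close, the uniform trace inequality you need for \eqref{eqa32} is unavailable, and even the identification of the weak limit in Step (iii) tacitly uses a bound on $\sqrt{\omega_{0}}\,\phi^{N}_{xx}(t,\pm1)$ in $L^{2}(0,T)$ that you have not secured. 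A secondary weak point is the energy-balance step for \eqref{eqa31}: the conserved quantity is the \emph{discrete} energy, and \eqref{eq10} only gives equivalence with the continuous one up to the factor $2+1/N$, so $E^{N}\to E$ requires an extra argument.

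The paper sidesteps all of this by a spectral choice of the data: $(\phi^{0,N},\phi^{1,N})=\sum_{|k|\le r(N)}a_{k}\Phi^{N}_{k}$, the truncation of the Fourier expansion of $(\phi^{0},\phi^{1})$ to the first $r(N)=\alpha N^{1/8}$ \emph{discrete} eigenmodes. The discrete and continuous solutions then have the explicit representations \eqref{eq_n2} and \eqref{eq_n1}, and all of \eqref{eq28}--\eqref{eqa323} follow termwise from the low-frequency estimates \eqref{1f}--\eqref{5f} of Theorem \ref{theigen1-d}; in particular \eqref{eqa323} is an immediate consequence of \eqref{3f}. If you wish to keep generic projections as approximating data, you would first have to establish a uniform direct inequality over the whole of $\mathbb{P}^{Di}_{N}(\Omega)$, a substantial claim that the paper neither makes nor needs.
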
 	
	
	Given $ (\phi^{0},\phi^{1})\in H_{0}^{1}(\Omega)\times L^{2}(\Omega) $, by Lemma \ref{lemcon1-d} there exists a sequence $(\phi^{0,N},\phi^{1,N})\in \mathbb{P}^{Di}_{N}(\Omega)\times \mathbb{P}^{\partial \Omega}_{N}(\Omega) $ such that \eqref{eq28}  holds. Furthermore, from formula \eqref{eqa31} we deduce in particular that
	\begin{align}\label{str1}
		(\phi^{N}(0,x),\phi^{N}_{t}(0,x))\longrightarrow	(\phi(0,x),\phi_{t}(0,x))\ \mbox{strongly}\;\mbox{in}\  H_{0}^{1}(\Omega)\times L^{2}(\Omega),
	\end{align}	
	where $ \phi^{N} $ is the solution of \eqref{eq1111-d} with final data $ (\phi^{0,N},\phi^{1,N}) $ and $ \phi $ is the solution
	of \eqref{eq41-d} with final data $ (\phi^{0},\phi^{1}) $. Now Passing to the limit, as $ N\longrightarrow\infty $ in formula \eqref{key11-d} and  taking into account Lemmas \ref{dataconv} and \ref{lemcon1-d} we obtain that $ h $ satisfies
	\begin{align}\label{eq55}
		\begin{split}
			\int_{0}^{T}\phi_{x}(t,1)h dt-\big<(\phi(0,\cdot),\phi_{t}(0,\cdot)),(u^0,u^1)\big>=0,
		\end{split}
	\end{align}
	$ \forall (\phi^{0},\phi^{1}) \in H_{0}^{1}(\Omega)\times L^{2}(\Omega) $,
	So $ h $ verifies property (P1) above.
	

	
	{\textbf{Step 3}: Strong convergence of the controls.} From the lower semi-continuity of the norm with respect to the weak convergence we have
	\begin{equation} \label{eq_C1}
		| f|^2_{L^2} + | h_R|^2_{L^2}+ | h_L|^2_{L^2} \leq \liminf_{N\to \infty}  | f^N|^2_{L^2} + | g_R^N|^2_{L^2}+ | g_L^N|^2_{L^2}.
	\end{equation}
	On the other hand, if
	we consider formulas \eqref{key11-d} and \eqref{varc} with $(\phi^{0,N},\phi^{1,N})=(\hat{\phi}^{0,N},\hat{\phi}^{1,N})  $ and $  (\phi^{0},\phi^{1})=(\hat{\phi}^{0},\hat{\phi}^{1}) $ respectively, we obtain:
	\begin{align}\label{stronger}
		\begin{split}
			0=\int_{0}^{T}&\eta(t)\left| \hat{\phi}^{N}_{x}(t,1)-\omega_{N}\hat{\phi}^{N}_{xx}(t,1)\right| ^{2} dt+\int_{0}^{T}\eta(t)\left| \sqrt{\omega_{N}}\hat{\phi}^{N}_{xx}(t,1)\right| ^{2} dt\\&+\int_{0}^{T}\eta(t)\left|\sqrt{\omega_{0}} \hat{\phi}^{N}_{xx}(t,-1)\right| ^{2} dt-\big<(\hat{\phi}^{N}(\cdot,0),\hat{\phi}^{N}_{t}(\cdot,0)),(u^{0,N},u^{1,N})\big>_{N},
		\end{split}
	\end{align}
	and
	\begin{align}\label{eq555}
		\begin{split}
			\int_{0}^{T}\eta(t)\left| \hat{\phi}_{x}(t,1)\right| ^{2} dt-\big<(\hat{\phi}(0,\cdot),\hat{\phi}_{t}(0,\cdot)),(u^0,u^1)\big>=0.
		\end{split}
	\end{align}
	The last term in \eqref{stronger} converges to the second term in \eqref{eq555} and therefore
	\begin{align}\label{con}
		\begin{split}
			&\int_{0}^{T}\eta(t)\left| \hat{\phi}^{N}_{x}(t,1)-\omega_{N}\hat{\phi}^{N}_{xx}(t,1)\right| ^{2} dt+\int_{0}^{T}\eta(t)\left| \sqrt{\omega_{N}}\hat{\phi}^{N}_{xx}(t,1)\right| ^{2} dt\\&\qquad+\int_{0}^{T}\eta(t)\left|\sqrt{\omega_{0}} \hat{\phi}^{N}_{xx}(t,-1)\right| ^{2} dt\longrightarrow \int_{0}^{T}\eta(t)\left| \hat{\phi}_{x}(t,1)\right| ^{2} dt \  in \ L^{2}(0,T),\ 
		\end{split}
	\end{align}
	as $ N \longrightarrow \infty$.	Now, taking into account \eqref{eq_C1}, \eqref{con} and the definition of the controls in \eqref{eq181-d} we deduce, 
	$$
	| f|^2_{L^2} + | h_R|^2_{L^2}+ | h_L|^2_{L^2} \leq |f|^2_{L^2},
	$$		
	and therefore $h_R=h_L= 0$. From \eqref{con} we also have,
	\begin{equation} \label{con_b}
		| f^N|_{L^2} \to |f|_{L^2}, \quad | h_R^N|^2_{L^2} \to 0, \quad | h_L^N|^2_{L^2} \to 0,
	\end{equation} 		
	as $N\to \infty$. 
	The strong convergence of the controls $ f^{N} \underset{N\to\infty}{\longrightarrow} f $ in $ \ L^{2}(0,T) $ is a consequence of the weak convergence stated in Step 2 above and the convergence of the norms stated in \eqref{con_b}.
	
	
	\section{The 2-d wave equation}
	
	In this section we show how to extend the numerical approximation of the control  problem \eqref{eq1} to a square domain $ \Omega=(-1,1)^2 $. The results are easily extended by separation of variables but the notation is a little cumbersome. Here we only state the main result and sketch the proof of the uniform observability result.

	Now, we assume that the control acts in the right and top sides $\Gamma=\Gamma_{1}\cup\Gamma_{2} =\{(1, x_{2})\in \mathbb{R}^{2}: -1 \le x_{2} \le 1\}\cup\{(x_{1}, 1) \in \mathbb{R}^{2}: -1 \le x_{1} \le 1\} $. The rest of the boundary is then the left and down sides $\partial \Omega \backslash \Gamma=\Gamma_{3}\cup\Gamma_{4} =\{(-1, x_2)\in \mathbb{R}^{2}: -1 \le x_2 \le 1\}\cup\{(x_1, -1) \in \mathbb{R}^{2}: -1 \le x_1 \le 1\} $.
	
	We also consider the discrete parameter $ {\bf N}=(N_{1},N_{2})\in \mathbb{N}\times \mathbb{N}$ and the Legendre Gauss-Lobatto nodes in each variable $ C=\{{\bf x}_{\bf i}=(x_{1k},x_{2m}),(0,0)\le {\bf i}=(k,m)\le (N_{1},N_{2})\} $. Note that $C$ (and some other quantities defined below) depends on $N$ but we do not make explicit this dependence in the notation to simplify. 
	We also set $ C=C^{\Omega}\cup C^{\partial \Omega} $ such that $ C^{\partial \Omega}= C^{\Gamma}\cup C^{\partial\Omega \backslash \Gamma}  $ and $ {\bf I}={\bf I}_{\Omega}\cup {\bf I}_{\partial \Omega} $ such that $ {\bf  I}_{\partial \Omega}= \cup_{k=1}^4{\bf  I}_{k} $ where $ {\bf  I}_{k}$ is the set of indexes corresponding to the collocation nodes on the boundary $ \Gamma_{k}$.
	Let $ \mathbb{P}_{\bf N}(\Omega) $ be the space of continuous functions in $ \bar{\Omega} $ which are polynomials of degree
	less than or equal to $ N_{1} $ (respectively $ N_{2}$) in the $ x_{1} $-variable (respectively in the $ x_{2} $-variable), and let $ \mathbb{P}_{\bf N}^{Di}(\Omega)  $ be the subspace of $ \mathbb{P}_{\bf N}(\Omega) $ of those functions vanishing on $\partial\Omega $.

	Now we introduce the following discrete control problem: Given $u^{0,\bf{N}}, u^{1,\bf{N}} \in \mathbb{P}^{Di}_{\bf{N}}(\Omega)$ and  $ T>0 $, find  $ f^{\bf N}\in L^2(0,T;\Gamma)$, $ g_{k}^{\bf N}\in L^2(0,T;\Gamma_{k})$, $k=1,...,4$,  such that the solution  $ u^{\bf N}\in C^{\infty} (L^{2}(0,T); \mathbb{P}_{\bf N}(\Omega)) $ of the following system:
	\begin{align}\label{d1}
		\begin{split}
			\begin{cases}
				\left( u^{\bf N}_{tt}-\Delta u^{\bf N}\right) (t,{\bf x}_{\bf i})= \sum_{k=1 }^{4}g_{k}^{\bf N}(t,{\bf x}_{\bf i})G_{k}^{\bf N}({\bf x}_{\bf i})
				\ &\mbox{in}\ (t,{\bf x}_{\bf i})\in(0,T) \times C^{\Omega}\\
				u^{\bf N}(t,{\bf x}_{\bf i})=f^{\bf N}(t,{\bf x}_{\bf i}) \ & \mbox{in}\ (t,{\bf x}_{\bf i})\in(0,T) \times C^{\Gamma} \\
				u^{\bf N}(t,{\bf x}_{\bf i})=0 \ &\mbox{in}\ (t,{\bf x}_{\bf i})\in(0,T) \times C^{\partial\Omega \backslash \Gamma} \\
				u^{\bf N}(0,{\bf x}_{\bf i})=u^{0,\bf N} ({\bf x}_{\bf i}),\quad u_{t}^{\bf N}(0,{\bf x}_{\bf i})=u^{1,\bf N} ({\bf x}_{\bf i})&\ \mbox{in} \ {\bf x}_{\bf i}\in  C^{\Omega},
			\end{cases}
		\end{split}
	\end{align}
	satisfies
	\begin{align}\label{eq14}
		\begin{split}
			u^{\bf N}(T,{\bf x}_{\bf i})=u^{\bf N}_{t}(T,{\bf x}_{\bf i})=0,\ {\bf x}_{\bf i}\in  C^{\Omega} .
		\end{split}
	\end{align}
	Here $G^{\bf N}_{k},k=1,..,4 $ are defined as in the 1-d case. For example, for the left and right boundaries, $\Gamma_3$ and $\Gamma_1$, the functions $G^{\bf N}_{3}$ and $G^{\bf N}_{1}$ depend only on the $x_1$ variable and coincide with $G^{L}(x_1)$ and $G^R(x_1)$ defined in \eqref{condc1} associated to polynomials of degree $N_1$ in $x_1$. Analogously, for the bottom and top boundaries  , $\Gamma_4$ and $\Gamma_2$, the functions $G^{\bf N}_{4}$ and $G^{\bf N}_{2}$ depend only on the $x_2$ variable and coincide again with $G^{L}(x_2)$ and $G^R(x_2)$ defined in \eqref{condc1}, this time associated to polynomials of degree $N_2$ in $x_2$.
	
	We now state the main results of existence and convergence for discrete control.

	\begin{theorem}\label{thdefc2-d}
		Given $T>4\sqrt{2}(2+N_1^{-1})(2+N_2^{-1})$ and $(u^{0,{\bf N}}, \; u^{1,{\bf N}}) \in (\mathbb{P}^{Di}_{\bf N}(\Omega))^2$, there exist controls $f^{\bf N} \in \; L^{2}(0,T;\Gamma),\; g^{{\bf N}}_{k}\in \;  L^{2}(0,T;\Gamma_{k})$, $k=1,...,4$, such that the solution $u^{\bf N}$ of 
		\eqref{d1} satisfies \eqref{eq14}.
	\end{theorem}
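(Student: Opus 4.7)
The plan is to mirror the three-step strategy used to prove Theorem~\ref{th1}: a variational characterization of discrete controls via a backward adjoint collocation wave equation; construction of controls as minimizers of a convex quadratic functional $J^{\bf N}$ on $(\mathbb{P}^{Di}_{\bf N}(\Omega))^{2}$; and coercivity of $J^{\bf N}$ deduced from a uniform-in-${\bf N}$ observability inequality for the adjoint system. The tensor-product structure of the LGL nodes, weights and Lagrange basis on the square $\Omega=(-1,1)^{2}$ makes the computations of Section~\ref{sec2} go through almost verbatim, provided the bookkeeping of the four boundary pieces $\Gamma_{1},\dots,\Gamma_{4}$ is done carefully.

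I would first write the adjoint as the backward problem $(\phi^{\bf N}_{tt}-\Delta\phi^{\bf N})(t,{\bf x}_{\bf i})=0$ on $(0,T)\times C^{\Omega}$ with $\phi^{\bf N}=0$ at $C^{\partial\Omega}$ and final data $(\phi^{0,\bf N},\phi^{1,\bf N})\in(\mathbb{P}^{Di}_{\bf N}(\Omega))^{2}$. Multiplying the interior equation of \eqref{d1} by $\omega_{\bf i}\phi^{\bf N}(t,{\bf x}_{\bf i})$, summing over ${\bf i}\in{\bf I}$, integrating in time, and applying the one-dimensional exactness of the LGL quadrature in each variable reduces each side's contribution to the one-dimensional computation \eqref{eq_c1}--\eqref{eq_c3}. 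This yields a variational identity
\begin{equation*}
\int_{0}^{T}\!\!\int_{\Gamma}f^{\bf N}\bigl(\partial_{\nu}\phi^{\bf N}-\omega\,\partial^{2}_{\nu\nu}\phi^{\bf N}\bigr)\,d\sigma\,dt + \sum_{k=1}^{4}\int_{0}^{T}\!\!\int_{\Gamma_{k}}\sqrt{\omega}\,\partial^{2}_{\nu\nu}\phi^{\bf N}\,g^{\bf N}_{k}\,d\sigma\,dt = \bigl\langle(\phi^{\bf N}(0,\cdot),\phi^{\bf N}_{t}(0,\cdot)),(u^{0,\bf N},u^{1,\bf N})\bigr\rangle_{\bf N},
\end{equation*}
valid for every $(\phi^{0,\bf N},\phi^{1,\bf N})\in(\mathbb{P}^{Di}_{\bf N}(\Omega))^{2}$. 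The analogue of $J^{\bf N}$ penalizes the squares of the five integrands above minus the same duality pairing, and its Gateaux critical point furnishes controls of the form \eqref{eq181-d} side by side.

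The main obstacle is the uniform observability inequality underpinning the coercivity of $J^{\bf N}$. Following Lemma~\ref{definv1-d}, I would rewrite the collocation adjoint as the equivalent continuous error equation
\begin{equation*}
\phi^{\bf N}_{tt}-\Delta\phi^{\bf N}=-\sum_{{\bf i}\in{\bf I}_{\partial\Omega}}\Delta\phi^{\bf N}(t,{\bf x}_{\bf i})\,\Psi_{\bf i}(x),\qquad \Psi_{\bf i}(x)=\Psi_{i_{1}}(x_{1})\,\Psi_{i_{2}}(x_{2}),
\end{equation*}
and test with the Rellich multiplier $m(x)\cdot\nabla\phi^{\bf N}$ with $m(x)=(x_{1}+1,x_{2}+1)$, which for the continuous 2-d wave equation on the square produces the observation on $\Gamma=\Gamma_{1}\cup\Gamma_{2}$ (since $m\cdot\nu$ vanishes on $\Gamma_{3}\cup\Gamma_{4}$) with critical time $T_{0}=2\sup_{\Omega}|m|=4\sqrt{2}$. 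The extra source term contributes, after Young's inequality and the tensor-product bound $|\Psi_{\bf i}|_{L^{2}(\Omega)}^{2}\le\|\Psi_{\bf i}\|_{\bf N}^{2}=\omega_{i_{1}}\omega_{i_{2}}$, precisely the four boundary terms $\sqrt{\omega}\,\partial^{2}_{\nu\nu}\phi^{\bf N}$ appearing in the right-hand side of the observability inequality. The threshold $T>4\sqrt{2}(2+N_{1}^{-1})(2+N_{2}^{-1})$ comes from the tensorization of the norm equivalence \eqref{eq10}, where the product constant $C_{2}=(2+N_{1}^{-1})(2+N_{2}^{-1})$ now replaces the one-dimensional one, and from the same balance between the $T/C_{2}$ and the $\varepsilon T$ terms as in the proof of Lemma~\ref{definv1-d}.

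Once the uniform observability inequality is established, $J^{\bf N}$ is continuous, strictly convex and coercive on the finite-dimensional space $(\mathbb{P}^{Di}_{\bf N}(\Omega))^{2}$; hence it admits a minimizer, and the Euler equation combined with the variational identity above produces controls $f^{\bf N}\in L^{2}(0,T;\Gamma)$ and $g^{\bf N}_{k}\in L^{2}(0,T;\Gamma_{k})$, $k=1,\dots,4$, for which the solution $u^{\bf N}$ of \eqref{d1} satisfies \eqref{eq14}.
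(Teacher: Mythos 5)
Your proposal is correct and follows essentially the same route as the paper, which itself only sketches the 2-d proof by reducing it, via the tensor-product structure, to the variational characterization, the minimization of a convex functional, and the uniform observability inequality obtained from the continuous error equation with the classical multiplier technique. The details you supply (the multiplier $m(x)=(x_1+1,x_2+1)$, the estimate of the extra source term through $\|\Psi_{\bf i}\|_{\bf N}^{2}=\omega_{i_1}\omega_{i_2}$, and the origin of the time threshold in the tensorized norm-equivalence constant) are consistent with what the paper indicates.
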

	
	\begin{theorem}\label{Thcon2-d}
		Given $ (u^{0}, u^{1})\in L^{2}\times H^{-1}  $, there exists a sequence $ (u^{0,\bf  N}, u^{1,\bf  N}) \in (\mathbb{P}^{Di}_{\bf N}(\Omega))^2 $such that  \[(u^{0,\bf  N}, u^{1,\bf  N})\to (u^{0}, u^{1})  \;\mbox{in}\; L^2\times H^{-1}.\]
		Furthermore, for any $ T > 4\sqrt{2}(2+N_1^{-1})(2+N_2^{-1}) $, we can choose the controls $ f^{\bf N},g_{k}^{\bf N},$ $k=1,...,4,$ such that the solution $ u^{\bf N} $ of \eqref{d1}  satisfies \eqref{eq14} and 
		\begin{align}\label{converg}
			\begin{split}
				&\hspace*{1cm}f^{\bf N}\longrightarrow f,\; in \;  L^{2}(0,T;\Gamma), \quad g_{k}^{\bf N}\longrightarrow 0,\;  in \;  L^{2}(0,T;\Gamma_{k})
			\end{split}
		\end{align}
	\end{theorem}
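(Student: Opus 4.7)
The plan is to mirror the three-step proof of Theorem \ref{thcon1-d}, exploiting the tensor-product structure of the LGL collocation nodes, quadrature weights and polynomial spaces on the square $\Omega=(-1,1)^2$. All the technical ingredients used in the one-dimensional argument admit natural two-dimensional analogues: a variational characterization as in Lemma \ref{lemvar1-d} (obtained by multiplying \eqref{d1} by $\phi^{\bf N}\omega_{\bf i}$ and summing over nodes, using the exactness of the tensor LGL quadrature for polynomials of total degree $\le 2{\bf N}-1$, and integrating by parts in each spatial variable); a quadratic cost functional $J^{\bf N}$ defined on $\mathbb{P}^{Di}_{\bf N}(\Omega)\times\mathbb{P}^{Di}_{\bf N}(\Omega)$ whose minimizers yield the controls through Euler--Lagrange identities analogous to \eqref{eq181-d}; and approximation lemmas for data in $L^2\times H^{-1}$ and for adjoint data in $H^1_0\times L^2$ obtained by tensorizing Lemmas \ref{dataconv} and \ref{lemcon1-d}.

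The real core of the argument is the 2-d uniform observability inequality underlying Theorem \ref{thdefc2-d}. Following the 1-d strategy, I rewrite the discrete adjoint collocation system as an equivalent continuous error equation
\begin{align*}
\phi^{\bf N}_{tt}-\Delta\phi^{\bf N}=-\sum_{k=1}^4 R_k^{\bf N}(t,x)\quad\text{in }(0,T)\times\Omega,
\end{align*}
where each $R_k^{\bf N}$ is a polynomial error term supported through Lagrange interpolants at the nodes lying on $\Gamma_k$ and carrying the factor $\phi^{\bf N}_{x_j x_j}(t,\pm 1)$ for the appropriate direction $j$. I would then apply the classical multiplier $m\cdot\nabla\phi^{\bf N}$ with $m=(x_1+1,x_2+1)$, exactly as in \cite{bib15} for the continuous 2-d wave equation; this produces the desired observation on $\Gamma_1\cup\Gamma_2$ and a coercive term controlling the discrete energy $E^{\bf N}$.

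The extra forcing contributions generated by $R_k^{\bf N}$ are estimated as in \eqref{lions4}: Young's inequality combined with the tensor-product identity $\|\Psi_{\bf i}\|^2_{\bf N}=\omega^{(1)}_i\omega^{(2)}_j$ and the norm equivalence \eqref{eq10} applied separately in each variable (with constants $C_2^{(j)}=2+N_j^{-1}$) allows us to absorb an arbitrarily small multiple of the energy and to express the remaining boundary contributions in terms of the $\omega_{N_j}|\phi^{\bf N}_{x_j x_j}|^2$ observations that define $J^{\bf N}$. Propagating the constants through this absorption yields the admissible time $T>4\sqrt{2}(2+N_1^{-1})(2+N_2^{-1})$: the factor $4\sqrt{2}$ is the continuous geometric time $2\,\mathrm{diam}(\Omega)$, while the product $(2+N_1^{-1})(2+N_2^{-1})$ appears because the norm equivalence has to be used once in each coordinate direction when bounding the left-hand side of the multiplier identity.

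Once the uniform observability inequality is in hand, Steps 1--3 of the proof of Theorem \ref{thcon1-d} transfer with only notational changes. The minimizing property $J^{\bf N}(\hat\phi^{0,\bf N},\hat\phi^{1,\bf N})\le 0$ together with observability and the tensorized version of \eqref{2cond} gives the uniform bound $|f^{\bf N}|^2_{L^2(0,T;\Gamma)}+\sum_{k=1}^4|g_k^{\bf N}|^2_{L^2(0,T;\Gamma_k)}\le M|(u^0,u^1)|^2_{L^2\times H^{-1}}$. Weak compactness produces limits $f^{\bf N}\rightharpoonup h$, $g_k^{\bf N}\rightharpoonup h_k$ and $(\hat\phi^{0,\bf N},\hat\phi^{1,\bf N})\rightharpoonup (\hat\phi^0,\hat\phi^1)$ weakly in $H^1_0\times L^2$; using the 2-d analogue of Lemma \ref{lemcon1-d} and test polynomials $\psi^{\bf N}=(1+x_j)/2$ in the weak form of the adjoint system, one identifies $h=\eta(t)\partial_\nu\hat\phi$ on $\Gamma$ and checks that $h$ satisfies the continuous variational characterization, so $h=f$ is the HUM control. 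Finally, passing to the limit in the 2-d version of identity \eqref{stronger} delivers convergence of the sum of squared norms of $(f^{\bf N},g_1^{\bf N},\ldots,g_4^{\bf N})$ to $|f|^2_{L^2(0,T;\Gamma)}$; together with weak lower semicontinuity this forces $h_k\equiv 0$ and upgrades the weak convergence of $f^{\bf N}\to f$ to strong convergence in $L^2(0,T;\Gamma)$. The main obstacle is the observability step, and within it the simultaneous control of the four error terms $R_k^{\bf N}$ and the tracking of the tensorized constants that produce the admissible time; everything else in the convergence proof is a routine transcription of the 1-d argument.
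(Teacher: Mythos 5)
Your proposal follows essentially the same route as the paper, which itself only sketches the 2-d argument: rewrite the adjoint collocation system as the continuous error equation \eqref{eq15}, apply the classical multiplier of Lions with observation on $\Gamma_1\cup\Gamma_2$, absorb the boundary error terms as in \eqref{lions4} using the tensorized quadrature and norm-equivalence constants, and then transcribe Steps 1--3 of the 1-d convergence proof. Your account is, if anything, more explicit than the paper's about where the time $4\sqrt{2}(2+N_1^{-1})(2+N_2^{-1})$ and the four error terms enter, and I see no gap relative to what the paper actually proves.
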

	
	The proofs of these two results follow closely the one-dimensional case. They are based on a suitable variational characterization of the controls and the  uniform observability inequality for the corresponding adjoint system, 
	\begin{align}\label{eq111-2d}
		\begin{cases}
			(\phi^{\bf N}_{tt}-\Delta\phi^{\bf N})(t,{\bf x}_{\bf i})=0 &\mbox{in}\ (t,{\bf x}_{\bf i})\in (0,T)\times C^{\Omega}\\
			\phi^{\bf N}(t,{\bf x}_{\bf i})=0 & \mbox{in}\ (t,{\bf x}_{\bf i})\in (0,T)\times C^{\partial \Omega}\\
			\phi^{\bf N}(T,{\bf x}_{\bf i})=\phi^{0,\bf N},\phi_{t}^{\bf N}(T,{\bf x}_{\bf i})=\phi^{1,\bf N}&\mbox{in}\ {\bf x}_{\bf i}\in C^{\Omega}.
		\end{cases}
	\end{align}		
	\begin{lemma}\label{definv2-d}
		Given $ T >4\sqrt{2}(2+N_1^{-1})(2+N_2^{-1}) $ there exists a constant $  C > 0 $, independent of $\bf N $, such that the solution of system \eqref{eq111-2d} satisfy
		\begin{align}\label{eq244}
			\begin{split}
				C\left\|(\nabla\phi^{0,\bf N},\phi^{1,\bf N}) \right\|^{2}_{\bf N\times \bf N}&\le\int_{0}^{T}\sum_{{\bf i}\in {\bf I}_{1}\cup {\bf I}_{2}}\left( \dfrac{\partial\phi^{\bf N}}{\partial\nu}-\omega^{\xi_{1}}_{\bf N}\dfrac{\partial^{2}\phi^{\bf N}}{\partial^{2}\nu}\right)^{2} (t,{\bf x}_{\bf i})\omega_{\bf i}^{\xi_{2}}dt\\&
				+\int_{0}^{T}\sum_{ {\bf i}\in {\bf I}_{\partial \Omega}}\omega^{\xi_{1}}_{\bf N}\left( \dfrac{\partial^{2}\phi^{\bf N}}{\partial^{2}\nu}\right)^{2} (t,{\bf x}_{\bf i})\omega_{\bf i}^{\xi_{2}} dt,
			\end{split}
		\end{align}
		for any final data $ (\phi^{0,\bf N},\phi^{1,\bf N})\in \mathbb{P}^{Di}_{\bf N}(\Omega)\times \mathbb{P}^{Di}_{\bf N} (\Omega)$.
		Here, $\omega^{\xi_{1}}_{\bf N}$ and $\omega^{\xi_{2}}_{\bf i}$ are defined by
		\begin{align} \label{eq_ome}
			\begin{cases}
				\omega^{\xi_{1}}_{\bf N}=\omega^{x_{1}}_{N_{1}}, \quad \omega^{\xi_{2}}_{\bf i}=\omega^{x_{2}}_{m}\; \quad \mbox{if} \ {\bf i}\in {\bf I}_{1}\cup {\bf I}_{3},\\
				\omega^{\xi_{1}}_{\bf N}=\omega^{x_{2}}_{N_{2}}, \quad \omega^{\xi_{2}}_{\bf i}=\omega^{x_{1}}_{k}\; \quad  \mbox{if} \ {\bf i}\in {\bf I}_{2}\cup {\bf I}_{4},
			\end{cases}
		\end{align}	
		and  $ \omega_{k}^{x_1}, \omega_{m}^{x_2}$ are the discrete weights associated with the one-dimensional LGL quadrature formula in each one of the variables (e.g. \cite{bib5}, Chapter 2]).	
	\end{lemma}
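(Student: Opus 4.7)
The plan is to mimic the strategy of Lemma \ref{definv1-d} in two dimensions, relying on the tensor product structure of both the Legendre-Gauss-Lobatto nodes and of the Lagrange basis. First I would rewrite \eqref{eq111-2d} as an equivalent continuous wave equation. Since $\phi^{\bf N}_{tt}-\Delta\phi^{\bf N}$ is a polynomial in $\mathbb{P}_{\bf N}(\Omega)$ that vanishes at every interior node and $\phi^{\bf N}$ itself vanishes on $\partial\Omega$, expanding it in the tensor basis $\{\Psi^{x_1}_k(x_1)\Psi^{x_2}_m(x_2)\}_{(k,m)\in {\bf I}}$ yields
\begin{equation*}
\phi^{\bf N}_{tt}-\Delta\phi^{\bf N} = -\sum_{{\bf i}\in {\bf I}_{\partial \Omega}} \frac{\partial^2\phi^{\bf N}}{\partial\nu^2}(t,{\bf x}_{\bf i})\,\Psi^{x_1}_k(x_1)\Psi^{x_2}_m(x_2),
\end{equation*}
with homogeneous Dirichlet data on $\partial\Omega$. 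This is the exact 2-d analogue of \eqref{eq151-d} and reduces the observability task to estimating the extra boundary-node error term.

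Next I would apply the classical two-dimensional multiplier $M\phi^{\bf N} = (x_1+1)\phi^{\bf N}_{x_1} + (x_2+1)\phi^{\bf N}_{x_2}$, which is precisely the geometric multiplier used in \cite{bib15} to observe the continuous wave equation from $\Gamma_1\cup\Gamma_2$. Integrating by parts in space-time gives a bulk contribution of $T\cdot E^{\bf N}$, where $E^{\bf N}=\tfrac{1}{2}(\|\phi^{\bf N}_t\|^2_{\bf N}+\|\nabla\phi^{\bf N}\|^2_{\bf N})$ is the (conserved) discrete energy, a positive boundary observation on $\Gamma_1\cup\Gamma_2$ involving $\partial_\nu\phi^{\bf N}$, a temporal boundary term that is bounded by $4\sqrt{2}\,E^{\bf N}$ via Cauchy--Schwarz (the $\sqrt{2}$ being geometric, arising from the two-vector form of the multiplier), and finally the error term inherited from the previous step.

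For the error contribution, Young's inequality splits each term of the form $\int_0^T \partial_\nu^2\phi^{\bf N}(t,{\bf x}_{\bf i})\int_\Omega M\phi^{\bf N}\,\Psi^{x_1}_k\Psi^{x_2}_m\,dx\,dt$ into a small multiple of the bulk energy (which is absorbed on the left) plus a term bounded by $|\Psi^{x_1}_k\Psi^{x_2}_m|^2_{L^2(\Omega)}\int_0^T|\partial^2_\nu\phi^{\bf N}(t,{\bf x}_{\bf i})|^2\,dt$. Since $|\Psi^{x_1}_k\Psi^{x_2}_m|^2_{L^2(\Omega)}\le \omega^{x_1}_k\omega^{x_2}_m$ by the exactness of the LGL quadrature in each variable (the analogue of the 1-d bound $|\Psi_0|^2_{L^2}\le\omega_0$ used in \eqref{lions4}), the error term is controlled precisely by the weighted sums on the right-hand side of \eqref{eq244} with the weights $\omega^{\xi_1}_{\bf N}\omega^{\xi_2}_{\bf i}$ described in \eqref{eq_ome}. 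Combining everything and passing from continuous to discrete norms via the two-variable version of \eqref{eq10}, whose constant is $(2+N_1^{-1})(2+N_2^{-1})$, yields the inequality provided $T>4\sqrt{2}(2+N_1^{-1})(2+N_2^{-1})$.

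The main obstacle I anticipate is the combinatorial bookkeeping for the boundary indices, in particular the corner nodes which belong simultaneously to two sides, and the proper identification of the weights $\omega^{\xi_1}_{\bf N}\omega^{\xi_2}_{\bf i}$ attached to each boundary piece so that they reproduce exactly the ones produced by the multiplier computation. One also needs to verify that the error terms expanded over $\Gamma_3\cup\Gamma_4$ can be absorbed using the second normal derivatives already appearing on those sides in \eqref{eq244}, without requiring an extra multiplier. Apart from these verifications and a careful two-variable sharpening of the norm equivalence \eqref{eq10}, the proof proceeds in parallel to Lemma \ref{definv1-d} and no new conceptual ingredient is required.
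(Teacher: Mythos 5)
Your proposal follows essentially the same route as the paper, which itself only sketches the argument: pass to the equivalent continuous error equation \eqref{eq15}, apply the classical Lions multiplier $(x_1+1)\phi^{\bf N}_{x_1}+(x_2+1)\phi^{\bf N}_{x_2}$ adapted to observation from $\Gamma_1\cup\Gamma_2$, absorb the boundary-node error terms via Young's inequality and the bound $|\Psi^{x_1}_k\Psi^{x_2}_m|^2_{L^2}\le\omega^{x_1}_k\omega^{x_2}_m$, and use the tensorized norm equivalence to obtain the time threshold $4\sqrt{2}(2+N_1^{-1})(2+N_2^{-1})$. The only point to tighten is that in dimension two the single multiplier $m\cdot\nabla\phi^{\bf N}$ does not by itself produce the full energy term $T\,E^{\bf N}$; one must add the standard lower-order multiplier $\tfrac12\phi^{\bf N}$, whose extra contribution against the error term is absorbed in exactly the same way, so this is a routine completion rather than a gap.
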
		
	
	As in the one dimensional case, the proof of Lemma \ref{definv2-d} is based on the associated error equation, equivalent to \eqref{eq111-2d}, and given by, 
	\begin{align}\label{eq15}
		\begin{cases}
			\phi^{\bf N}_{tt}-\Delta\phi^{\bf N}=-\sum_{{\bf i}\in  I_{\partial \Omega}}\dfrac{\partial^{2} \phi^{\bf N} }{\partial^{2} \nu}(t,{\bf x}_{\bf i})\Psi^{x_1}_{i_1}\Psi^{x_2}_{i_2} &\mbox{in} \ Q=(0,T)\times\Omega  \\
			\phi^{\bf N}=0 & \mbox{on}\ (0,T)\times\ \partial\Omega\\
			\phi^{\bf N}(T,x)=\phi^{0,\bf N},\phi_{t}^{\bf N}(T,x)=\phi^{1,\bf N} & \mbox{in}\ \Omega,
		\end{cases}
	\end{align}
	where ${\bf i}=(i_1,i_2)$ and $ \Psi^{x_1}_{k}(x_1) $ (respectively $ \Psi^{x_2}_{m}(x_2) $ ) are the Lagrange polynomial which are $ 1 $ at $ x_{1k} $ (respectively at $ x_{2m} $ ) and $ 0 $ at all the other collocation points.	
	For this error equation we can apply the classical multipliers technique (see Lions \cite{bib15}).  The extra terms coming from the right hand side in \eqref{eq15} are estimated 
	following the same idea in $ 1$-d case. The analysis is sraightforward and it does not introduce new difficulties. 
	
	\section{ Numerical experiments}
	
	In this section we illustrate the results in this paper  approximating the  boundary control both for the $ 1$-d and $ 2$-d wave equation in the square.

	\textbf{Experiment 1}: we first consider the one-dimensional wave equation  with  two different types of initial position and velocity.  The first one corresponds to a smooth bump that moves to the left hand side and it is controlled from the right extreme. It is given by $ u^{0}(x)=e^{-10x^2},\; u^{1}(x)=-20xe^{-10x^2}$. 
	The second one corresponds to a Lipschitz continuous initial data  $ u^{0}(x)=min\{(1-x),(1+x)\},\; u^{1}(x)=0$ . We take final time $T = 4.4$ with time step $ dt= 10^{-2}$. Note that the time control is only slightly greater than the minimal control time for the continuous wave equation ($T=4$) and lower than the time given by the discrete control problem in Theorems \ref{thcon1-d} and \ref{thdefc1-d}. The good approximation obtained in this case provides a numerical evidence that the control time in the mentioned theorems is probably not optimal. 
	
	In table \ref{table_1} and \ref{table_2} we show the behavior of the norm of the controls when the degree of polynomials $ N $ grows. As stated in Theorem \ref{thdefc1-d}, we observe that the boundary control remains bounded while the two artificial controls included in the system ($g_L^N$ and $g_R^N$) vanish as $ N $ grows. The initial data and the boundary control are plotted in Figure \ref{fg4}.
	\begin{table}[h]
		\caption{Norm of the null controls when $ u^{0}(x)=e^{-10x^2},u^{1}(x)=-20xe^{-10x^2}$ as $N$ grows}\label{table_1}%
		\begin{tabular}{@{}llll@{}}
			\toprule
			N & $\left| f^N\right| _{L^{2}} $& $\left| g^{N}_{R}\right| _{L^{2}} $   &$\left| g^{N}_{L}\right| _{L^{2}} $\\
			\midrule
			20	&$ 5.6\times10^{-1}  $& $ 2\times 10^{-3}  $  &$ 2\times 10^{-3}  $\\ 
			50	&$ 5.6\times10^{-1}  $& $ 1\times 10^{-4}  $  &$ 1\times 10^{-4}  $\\ 
			100 & $ 5.6\times10^{-1}  $& $ 1\times 10^{-6}  $ & $ 1\times 10^{-6} $\\ 
			\botrule
		\end{tabular}
	\end{table}
	\begin{table}[h]
		\caption{ Norm of the null controls as $N$ grows for $ u^{0}(x)=min\{(1-x),(1+x)\},$  $u^{1}(x)=0$ }\label{table_2}%
		\begin{tabular}{@{}llll@{}}
			\toprule
			N & $\left| f^{N}\right| _{L^{2}} $& $\left| g^{N}_{R}\right| _{L^{2}} $   &$\left| g^{N}_{L}\right| _{L^{2}} $\\
			\midrule
			20	&$ 5.8\times10^{-1}  $& $ 3\times 10^{-4}  $  &$ 3\times 10^{-4}  $\\ 
			50 & $ 5.8\times10^{-1}  $& $ 4\times 10^{-4}  $ & $ 4\times 10^{-4}  $\\ 
			100 & $ 5.8\times10^{-1}  $ &  $ 1\times 10^{-4}  $ &$ 1\times 10^{-4}  $\\ 
			200 & $ 5.8\times10^{-1}  $ &  $ 1\times 10^{-6}  $ &$ 1\times 10^{-6}  $\\ 
			\botrule
		\end{tabular}
		
	\end{table}
	
	\begin{figure}[h]
		\centering
		\includegraphics[width=11cm]{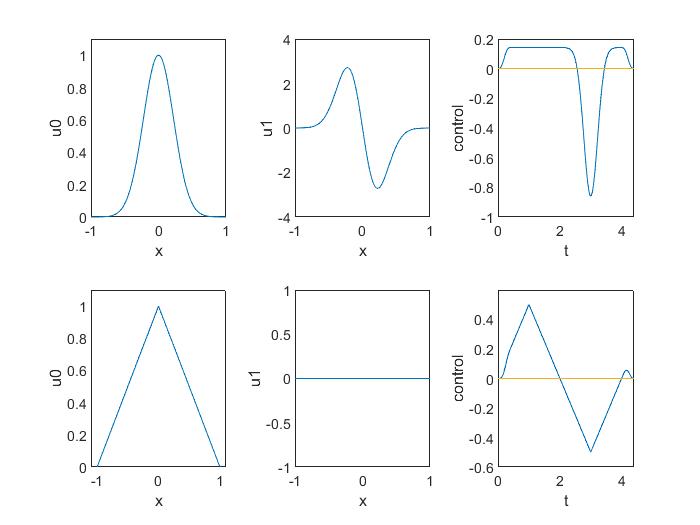}
		\caption{The behavior of discrete control $ f^N$ for $ u^{0}(x)=e^{-10x^2},u^{1}(x)=-20xe^{-10x^2}$ (first row) and for $ u^{0}(x)=min\{(1-x),(1+x)\},u^{1}(x)=0$ (second row) for $ N=100 $}
		\label{fg4}
	\end{figure}

	\textbf{Experiment 2:} Here we illustrate the rate of  convergence of the discrete control  to the limit. In particular we compare the $ L^2 $-norm of the difference between the discrete control when $ N=200 $ (that we take as continuous control) and the discrete control as $ N $ grows. The rest of the data are as in the experiment 1 when considering Lipschitz continuous initial data.
	
	In table \ref{table_3} we show that the error between the discrete control and the limit one decreases very fast recovering the high accuracy expected by the spectral collocation method, even when non-smooth data is considered. This is one of the main advantages of the collocation method. 
	\begin{table}[h]
		\caption{ Convergence of the discrete control to the limit as $N$ grows}\label{table_3}%
		\begin{tabular}{@{}llll@{}}
			\toprule
			N  & $\mbox{log}(\left| f^{N}-f^{200}\right| _{L^{2}}) $& $\mbox{log}(\left| g^{N}_{R}\right| _{L^{2}}) $   &$\mbox{log}(\left| g^{N}_{L}\right| _{L^{2}}) $\\
			\midrule
			10	&$ -1.9  $& $ -2.5  $  &$ -2.5  $\\ 
			50 & $ -2.5  $ &  $ -3.3  $ &$ -3.3 $\\
			100 &  $ -3.0 $  & $ -3.8$ & $ -3.8  $\\
			\botrule
		\end{tabular}
		
	\end{table}

	\textbf{Experiment 3}: Now we consider a two dimensional square domain $ (-1, 1)^2$. The control acts at the two sides
	$ \{1\}\times(-1, 1)\cup(-1, 1)\times\{1\}  $ in the time interval $ t \in (0,4.4),$ with step $ dt= 10^{-2}$. We consider the degree of polynomial is $ {\bf N}=(80,80) $ in the $ x_{1},x_{2}$-variable respectively. The initial position and velocity given by a bump function $ u^{0}=e^{-10x_{1}^2}e^{-10x_{2}^2} $ and $ u^{1}=(-20x_{1}e^{-10x_{1}^2})(-20x_{2}e^{-10x_{2}^2}) $. As in $ 1 $-d the time control is lower than the time given by the discrete control problem in Theorems \ref{thdefc2-d} and \ref{Thcon2-d}, and also lower than the control time for the continuous wave equation ($T=4\sqrt{2}$)(\cite{bib15}). However, the initial data is almost compactly supported in the disc $|{\bf x}|<1/2$ inside the domain and this makes this special data controllable for the chosen time. 
	In Figure \ref{Cont2d} we have drawn the behavior of the norm of control acting in the two sides of the square during the time since the other controls are of the order  $ 10^{-5} $. As in $ 1$-d in table \ref{table_4} we show the behavior of the norm for the controls when the degree of polynomials $ \bf N $ grows. As stated in Theorem \ref{Thcon2-d}, we observe that the boundary control remains bounded while the four artificial controls included in the system ($g_k^{\bf N}$, $k=1,...,4$) vanish as $ N $ grows. 
	
	\begin{figure}[h]
		\centering
		\includegraphics[width=9cm]{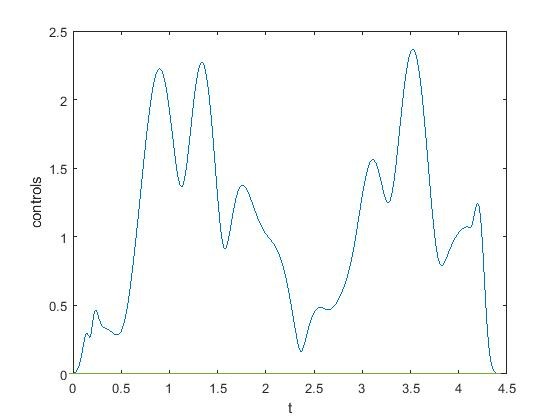}
		\caption{The behavior of control $ f^{\bf N} $ during the time for $ {\bf N}=(80,80) $}
		\label{Cont2d}
	\end{figure}
	\begin{table}[h]
		\caption{ Norm of the  controls for $ u^{0}(x)=e^{-10(x_{1}^2+x_{2}^2)}$,  $u^{1}(x)=(20^2x_{1}x_{2})$ $e^{-10(x_{1}^2+x_{2}^2)} $ as $ {\bf N} $   grows  }\label{table_4}
		\begin{tabular}{@{}llllll@{}}
			\toprule
			$ {\bf N} $ & $\left| f^{\bf N}\right| _{L^{2}} $& $\left| g^{\bf N}_{1}\right| _{L^{2}} $   &$\left| g^{\bf N}_{2}\right| _{L^{2}} $& $\left| g^{\bf N}_{3}\right| _{L^{2}} $   &$\left| g^{\bf N}_{4}\right| _{L^{2}} $\\
			\midrule
			$ (20,20) $	&$ 7.2\times10^{-1}  $& $ 8.5\times 10^{-3}  $  &$ 8.2\times 10^{-3}  $& $8.5\times 10^{-3} $   &$8\times 10^{-3} $\\ 
			$ (50,50) $ & $ 7.2\times10^{-1}  $ &  $ 7.4\times 10^{-4}  $ &$ 7.4\times 10^{-4}  $& $7.4\times 10^{-4} $   &$7.4\times 10^{-4} $\\
			$ (80,80) $&  $7.2\times10^{-1}$   & $ 6.4\times 10^{-5}$ & $ 6.4\times 10^{-5}  $& $6.4\times 10^{-5} $   &$6.4\times 10^{-5} $\\ 
			\botrule
		\end{tabular}
		
	\end{table}
	

	\section{Appendix}
	
	\hspace*{0.5cm}In this section we give a proof of Lemmas \ref{dataconv} and \ref{lemcon1-d}. Both proofs rely on a careful spectral analysis that we address first.  
	Let $ (\lambda_{k},\varphi_{k})_{k\in \mathbb{N}}  $  be the eigenvalues and the eigenfunctions associated to the Laplace equation,
	\begin{align}\label{eigp2}
		\begin{split}
			\begin{cases}
				\varphi_{xx}+\lambda\varphi=0,&\ \mbox{in} \ x\in \Omega =(-1,1) \\
				\varphi(1)=\varphi(-1)=0.
			\end{cases}
		\end{split}
	\end{align} 
	The eigenvalues are simple and can be computed explicitly ($\lambda_k=(k\pi/2)^2$, $k\in \mathbb{N}$) while the associated eigenfunctions $ \{\varphi_{k}\}_{k\in N } $ constitutes an orthogonal basis in $L^2(-1,1) $. 
	
	Associated to the collocation numerical approximation of the wave equation we introduce the following discrete eigenvalue problem:
	\begin{align}\label{eigp1}
		\begin{split}
			\begin{cases}
				\varphi^{N}_{xx}(x_{i})+\lambda^{N}\varphi^{N}(x_{i})=0,&\ \mbox{ at } \ x_{i}\in C^{\Omega} \\
				\varphi^{N}(-1)=\varphi^{N}(1)=0.
			\end{cases}
		\end{split}
	\end{align}
	It is known that this eigenvalue problem admits $ N-1 $ eigenvalues which are simple and real numbers (see Herv$ \acute{e} $ Vandeven \cite{bib18} and Trefethen \cite{bib19}). We assume they are written in increasing order, i.e. 
	$0<\lambda^{N}_{1}<\lambda^{N}_{2}<...<\lambda^{N}_{N-1}.$
	The associated eigenfunctions $ \{\varphi^{N}_{k}\}_{k=1}^{N-1} $ constitutes an orthogonal basis in $ \mathbb{P}^{Di}_{N} $ with the discrete scalar product $ (\cdot,\cdot )_{N}.$ 
	
	From now on we assume that both $\varphi_k^N$ and $\varphi_k$ are normalized in the $L^2-$norm.
	
	The following Theorem states the spectral approximation results that we need.
	\begin{theorem}\label{theigen1-d}
		Let $ m\ge 2 $, $ \alpha\in (0,2/\pi) $ and $ r(N)= \alpha N^{\frac{1}{8}}$. Then, for any $ k \in \{1,...,r(N)\}$ there exists constants $C_\alpha, \; C$, independent of $N$, such that the  following estimates hold:
		\begin{align}\label{1f}
			\left| \sqrt{\lambda^{N}_{k}}-\sqrt\lambda_{k}\right|\le C_\alpha p(\alpha)^{N^{2/3}},\ 0<p(\alpha)<1,
		\end{align}
		\begin{align}\label{2f}
			\left| \varphi^{N}_{k}-\varphi_{k}\right|_{L^{2}}\le C N^{-3/4},
		\end{align}
		\begin{align}\label{22f}
			\left| \varphi^{N}_{k}-\varphi_{k}\right|^{2}_{H^{1}_{0}}\le C\left(p(\alpha)^{N^{2/3}}+N^{-1} \right) ,
		\end{align}
		\begin{align}\label{3f} 
			\left| \sqrt{w^{N}_{0}}\varphi^{N}_{k,xx}(\pm 1)\right|\le C  N^{-1/2},
		\end{align}
		\begin{align}\label{5f} 
			\left| \varphi^{N}_{k,x}(1)-\varphi_{k,x}(1)\right|_{L^{2}}&\le C\left( p(\alpha)^{N^{2/3}}+  N^{-1}\right).
		\end{align} 
	\end{theorem}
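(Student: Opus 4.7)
The plan is to reformulate the discrete eigenproblem \eqref{eigp1} as a perturbation of the continuous problem \eqref{eigp2}. As in the derivation of \eqref{eq151-d}, the polynomial $\varphi_{k,xx}^N+\lambda_k^N\varphi_k^N\in\mathbb{P}_N$ vanishes at all interior LGL nodes, and together with the boundary conditions this forces
\begin{equation*}
\varphi_{k,xx}^N(x) + \lambda_k^N \varphi_k^N(x) = \varphi_{k,xx}^N(-1)\,\Psi_0(x) + \varphi_{k,xx}^N(1)\,\Psi_N(x),\quad x\in(-1,1).
\end{equation*}
Since $|\Psi_{0}|_{L^2}^2\leq\omega_0\sim N^{-2}$ and similarly for $\Psi_N$, the discrete eigenelements satisfy a continuous eigenproblem with an $L^2$-small forcing concentrated near the endpoints. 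All the stated estimates should then follow from careful perturbation arguments, provided we first control the boundary second derivatives.

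First I would prove the exponential eigenvalue estimate \eqref{1f}. Testing the error equation against the continuous eigenfunction $\varphi_k$ and integrating by parts twice, using $\varphi_k(\pm 1)=\varphi_k^N(\pm 1)=0$, yields
\begin{equation*}
(\lambda_k^N-\lambda_k)\,(\varphi_k^N,\varphi_k)_{L^2}=\varphi_{k,xx}^N(-1)(\Psi_0,\varphi_k)_{L^2}+\varphi_{k,xx}^N(1)(\Psi_N,\varphi_k)_{L^2}.
\end{equation*}
The inner products $(\Psi_{0,N},\varphi_k)_{L^2}$ are precisely the LGL quadrature defect for the analytic integrand $\Psi_{0,N}\varphi_k$, whose spectral width is governed by $\sqrt{\lambda_k}\sim k\pi/2$. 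The classical exponential accuracy of Gauss--Lobatto quadrature on analytic functions (see \cite{bib2}, \cite{bib5}, \cite{bib18}) delivers a bound of order $p(\alpha)^{N^{2/3}}$ as soon as $k\leq r(N)=\alpha N^{1/8}$, after absorbing a crude a priori polynomial bound on $\varphi_{k,xx}^N(\pm 1)$. This gives \eqref{1f} and, as a consequence, transfers the continuous spectral gap $|\lambda_k-\lambda_j|\geq c\min(k,j)$ to a uniform discrete spectral gap for $k,j\leq r(N)$.

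Next, the trace bound \eqref{3f} is obtained by testing the error equation with multipliers in the spirit of Lemma \ref{definv1-d}, now applied to the single eigenfunction $\varphi_k^N$. Using the normalization $|\varphi_k^N|_{L^2}=1$, the identity $|\varphi_{k,x}^N|_{L^2}^2=\lambda_k^N\sim\lambda_k$, and the LGL exactness for polynomials of degree $\leq 2N-1$, one extracts an estimate of the form $\omega_0|\varphi_{k,xx}^N(\pm 1)|^2\leq C k^2/N$, which implies \eqref{3f} on the range $k\leq r(N)$.

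Finally, the eigenfunction estimates \eqref{2f}, \eqref{22f}, \eqref{5f} follow from standard selfadjoint perturbation theory applied to the error equation. Expanding $\varphi_k^N=\sum_{j\geq 1}a_j^N\varphi_j$ and projecting on $\varphi_j$ gives, for $j\ne k$,
\begin{equation*}
(\lambda_k^N-\lambda_j)\,a_j^N = \varphi_{k,xx}^N(-1)(\Psi_0,\varphi_j)_{L^2}+\varphi_{k,xx}^N(1)(\Psi_N,\varphi_j)_{L^2},
\end{equation*}
so combining the uniform discrete gap from the first step, the trace bound from the second step, and the estimates $|(\Psi_{0,N},\varphi_j)_{L^2}|\leq|\Psi_{0,N}|_{L^2}\leq CN^{-1}$ yields decay of the off-diagonal coefficients $a_j^N$. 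Summing these bounds with the respective weights $1$, $j^2$, and $|\varphi_{j,x}(\pm 1)|^2\sim j^2$ produces \eqref{2f}, \eqref{22f}, and \eqref{5f}. The main technical obstacle is the exponential quadrature estimate in the first step uniformly in $k\leq r(N)$; this is what dictates the narrow low-frequency window $k\leq \alpha N^{1/8}$ and ultimately the algebraic rates appearing in the remaining estimates.
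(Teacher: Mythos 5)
Your overall strategy---rewriting \eqref{eigp1} as the continuous error equation with the forcing $\varphi^{N}_{k,xx}(-1)\Psi_0+\varphi^{N}_{k,xx}(1)\Psi_N$ and then running a perturbation argument---is the same starting point as the paper, but several of your individual steps do not close. The most concrete failure is in your trace estimate: you claim $\omega_0\left|\varphi^{N}_{k,xx}(\pm 1)\right|^2\le C k^2/N$ and assert that this implies \eqref{3f} for $k\le r(N)=\alpha N^{1/8}$. It does not: \eqref{3f} requires $\omega_0\left|\varphi^{N}_{k,xx}(\pm1)\right|^2\le CN^{-1}$, while your bound only gives $C\alpha^2 N^{1/4}/N=C\alpha^2N^{-3/4}$, which is larger than $N^{-1}$. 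The paper obtains \eqref{3f} by a quite different device: it tests the error equation against the Lagrange polynomial $\Psi^{N-1}_{N-1}$ of the $(N-1)$-point LGL grid, exploits the exactness of both the $N$- and $(N-1)$-point quadratures, and isolates $\varphi^{N}_{k,xx}(1)$ through the weight mismatch $\omega^{N-1}_{N-1}-\omega^{N}_{N}$, combined with an $H^2$ bound on $\varphi^N_k$ (Lemma \ref{lemeigen1-d}). Your "multiplier in the spirit of Lemma \ref{definv1-d}" is not a substitute as stated, because the multiplier identity produces the boundary second derivatives inside error terms on both sides and you give no mechanism to absorb them.

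There are two further gaps. First, for \eqref{1f} you attempt to rederive the exponential eigenvalue convergence from a quadrature-defect identity, but this is circular (you need $(\varphi^N_k,\varphi_k)_{L^2}$ bounded below, i.e.\ essentially \eqref{2f}, and an a priori bound on $\varphi^N_{k,xx}(\pm1)$, i.e.\ essentially \eqref{3f}, before either is proved), and nothing in your argument produces the specific exponent $N^{2/3}$; the paper simply invokes Vandeven's theorem \cite{bib18} here, which is the honest route. Second, your derivation of \eqref{2f} by expanding $\varphi^N_k$ in the \emph{continuous} eigenbasis gives, with $\left|(\Psi_0,\varphi_j)_{L^2}\right|\le CN^{-1}$ and $\left|\varphi^N_{k,xx}(\pm1)\right|\le CN^{1/2}$, at best $\sum_{j\ne k}|a^N_j|^2\le CN^{-1}$, i.e.\ an $L^2$ rate $O(N^{-1/2})$ rather than the stated $N^{-3/4}$; improving $(\Psi_0,\varphi_j)$ via quadrature-error estimates makes the sum over high frequencies $j$ delicate, and you do not address the splitting. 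The paper instead expands the $H^1_0$-projection $\Pi_N\varphi_k$ in the \emph{discrete} eigenbasis, controls the off-diagonal coefficients by the quantity $\varrho_{k,N}$ together with the projection, interpolation and quadrature error bounds \eqref{estimation1}--\eqref{estimation}, and this is what delivers the $N^{-3/4}$ rate. In short: right skeleton, but the trace bound, the eigenvalue step, and the quantitative rates all need to be reworked along the lines above.
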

	
	\begin{proof}[\textbf{Proof.}] 
		\textbf{Step 1: Estimate  \eqref{1f}.} This is a direct consequence of the following estimate proved in \cite{bib18}: 
		\begin{align}\label{eigcon}
			\left| \lambda^{N}_{k}-\lambda_{k}\right|\le C_\alpha p(\alpha)^{N^{2/3}}.
		\end{align}  
		
		\textbf{Step 2: Estimate  \eqref{2f}.}  We follow the idea in \cite{bib16} (Lemma 6.4-3) where a related result is obtained for the Galerkin approximation. Let us introduce the variational characterizations of both \eqref{eigp2} and \eqref{eigp1}:
		\begin{align}\label{an12}
			\begin{split}
				a(\varphi,v)=\lambda(\varphi,v), \quad \forall v\in H^{1}_{0}(\Omega),
			\end{split}
		\end{align}
		where $ a(\cdot,\cdot) $ is the bi-linear form defined by $ a(u,v)=(u_{x},v_{x}), \forall u,v\in H^{1}_{0}, $
		and $(\cdot,\cdot)$ denotes the scalar product in $L^2(-1,1)$, and 	
		\begin{align}\label{an1}
			\begin{split}
				a(\varphi^{N},v^{N})=\lambda^{N}(\varphi^{N},v^{N})_{N}, \qquad  \forall v^{N}\in \mathbb{P}_{N}^{Di}(\Omega) .
			\end{split}
		\end{align}	
		Note that the bi-linear form 	$ a(\cdot,\cdot) $ is the same both in the continuous and the discrete formulation. In fact \eqref{an1} is deduced from \eqref{eigp1} multiplying the equations by $\omega^{N}_i v^N(x_i)$ and adding in $i\in I$,
		i.e. 
		\begin{align*}
			0&=\sum_{i\in I} \varphi^N_{xx}(x_i) \omega^{N}_i v^N(x_i) + \sum_{i\in I}\lambda^N \varphi^N(x_i)\omega^{N}_i v^N(x_i)=\int_{-1}^1\varphi^N_{xx}v^N \; dx +\lambda^N  (\varphi^N,v^N)_N\\
			&=-\int_{-1}^1\varphi^N_{x}v^N_x \; dx +\lambda^N  (\varphi^N,v^N)_N=-a(\varphi^N,v^N)+\lambda^N  (\varphi^N,v^N)_N.
		\end{align*}
		The main difference with the case treated in \cite{bib16} is that  here the right hand side in the variational characterization \eqref{an1} makes appear the discrete scalar product $(\cdot,\cdot)_N$, instead of the $L^2$ one, and this introduces some technical details. 
		Let us define \begin{align}\label{Q}
			\begin{split}
				\varrho_{k,N}=\max_{i\in I_{\Omega},i\neq k} \dfrac{\lambda_{k}}{\left| \lambda^{N}_{i}-\lambda_{k}\right| },\ I_{\Omega}=\{1,...,N-1\},
			\end{split}
		\end{align}
		and the orthogonal projection $ \Pi_{N}\varphi_{k}\in \mathbb{P}^{Di}_{N}$ characterized by 
		\begin{align}\label{an}
			\begin{split}
				a(\Pi_{N}\varphi_{k}-\varphi_{k},v^{N})=0, \qquad  \forall v^{N}\in \mathbb{P}_{N}^{Di}(\Omega).
			\end{split}
		\end{align}
		We now write $ \Pi_{N}\varphi_{k} $ in
		the basis $ \varphi^{N}_{i}$. If we normalize $\varphi^{N}_{i}$ in such a way that $ |\varphi^{N}_{i}| _{L^2}=1 $, then
		\begin{align}\label{pro}
			\begin{split}
				\Pi_{N}\varphi_{k}=\sum_{i\in I_{\Omega}} \dfrac{(\Pi_{N}\varphi_{k},\varphi^{N}_{i})_{N}}{\left\|\varphi^{N}_{i} \right\|^{2} _{N}}\varphi^{N}_{i} .
			\end{split}
		\end{align}
		If we denote,
		\begin{align}\label{pro1}
			\begin{split}v^{N}_{k}=\dfrac{(\Pi_{N}\varphi_{k},\varphi^{N}_{k})_{N}}{\left\|\varphi^{N}_{k} \right\|^{2} _{N}}\varphi^{N}_{k} .
			\end{split}
		\end{align}
		Then,
		\begin{align}\label{pr-v}
			\begin{split}
				\left|\Pi_{N}\varphi_{k}-v^{N}_{k} \right|^{2}_{L^{2}}= \left| \sum_{i\in I_{\Omega},i\ne k} \dfrac{(\Pi_{N}\varphi_{k},\varphi^{N}_{i})_{N}}{\left\|\varphi^{N}_{i} \right\|^{2} _{N}}\varphi^{N}_{i}\right|_{L^{2}}^{2}
				&\le 2 \sum_{i\in I,i\ne k}   \dfrac{\left| (\Pi_{N}\varphi_{k},\varphi^{N}_{i})_{N}\right| ^{2} }{\left\|\varphi^{N}_{i} \right\|^{2} _{N}} .
			\end{split}
		\end{align}
		From \eqref{an1},\eqref{an12} and \eqref{an} we easily obtain 
		\[(\lambda^{N}_{i}-\lambda_{k})(\Pi_{N}\varphi_{k},\varphi^{N}_{i})_{N}=\lambda_{k}(\varphi_{k}-\Pi_{N}\varphi_{k},\varphi^{N}_{i})_{N}+\lambda_{k}\bigg( (\varphi_{k},\varphi^{N}_{i})-(\varphi_{k},\varphi^{N}_{i})_{N}\bigg). \]
		Therefore, using \eqref{Q} we get
		\begin{align}\label{prn}
			\begin{split}
				\left| (\Pi_{N}\varphi_{k},\varphi^{N}_{i})_{N}\right|^{2} \le2\varrho_{k,N}^{2}\left| (\varphi_{k}-\Pi_{N}\varphi_{k},\varphi^{N}_{i})_{N}\right|^{2} +2\varrho_{k,N}^{2}\left|  (\varphi_{k},\varphi^{N}_{i})-(\varphi_{k},\varphi^{N}_{i})_{N}\right|^{2},
			\end{split}
		\end{align} 
		that we now substitute in \eqref{pr-v}. Taking into account  the norm equivalence  in \eqref{eq10} we easily deduce 
		\begin{align}\label{prn3}
			\begin{split}
				\left|\Pi_{N}\varphi_{k}-v^{N}_{k}  \right|_{L^{2}}\le\sqrt{6}\varrho_{k,N}\left( \left\|\varphi_{k} - \Pi_{N}\varphi_{k} \right\|_{N}+\sum_{i\in I_{\Omega},i\ne k}\left|  (\varphi_{k},\varphi^{N}_{i})-(\varphi_{k},\varphi^{N}_{i})_{N}\right|\right).
			\end{split}
		\end{align}
		The idea now is to replace the $\| \cdot \|_N$ norm in the right hand side by the $|\cdot |_{L^2}$ norm. In fact, these two norms are equivalent for polynomials in $\mathbb{P}_N$ so that we first replace $\varphi_k$ by a polynomial. Let's define the interpolation $ I_{N}:H^{1}_{0}\longrightarrow \mathbb{P}_{N}^{Di} $ as follow 
		\begin{align}\label{In}
			\begin{split}
				\begin{cases}
					I_{N}\varphi_{k}\in \mathbb{P}_{N}^{Di},\\ 
					I_{N}\varphi_{k}(x_{i})=\varphi_{k}(x_{i}),\ \forall i\in I.
				\end{cases}
			\end{split}
		\end{align}
		Note that,  
		\begin{equation} \label{eq_cca1}
			\left\|\varphi_{k} - \Pi_{N}\varphi_{k} \right\|_{N}=\left\|I_N\varphi_{k} - \Pi_{N}\varphi_{k} \right\|_{N} \leq \sqrt{3}\left|I_N\varphi_{k} - \Pi_{N}\varphi_{k} \right|_{L^2}.
		\end{equation}
		Therefore, substituting  in \eqref{prn3} and then adding and subtracting $ \varphi_{k} $ we obtain,
		\begin{align}\label{prn5}
			\begin{split}
				\left|\Pi_{N}\varphi_{k}-v^{N}_{k}  \right|_{L^{2}}&\le\sqrt{6}\varrho_{k,N}\bigg(\sqrt{3}\left|I_{N}\varphi_{k} -\varphi_{k} \right|_{L^{2}}+\sqrt{3}\left|\varphi_{k} -\Pi_{N}\varphi_{k} \right|_{L^{2}}\\&\hspace*{0.5cm}+\sum_{i\in I_{\Omega},i\ne k}\left|  (\varphi_{k},\varphi^{N}_{i})-(\varphi_{k},\varphi^{N}_{i})_{N}\right|\bigg).
			\end{split}
		\end{align}
		Furthermore \eqref{pro1} and $\left| \varphi^{N}_{k}\right| _{L^{2}}=\left| \varphi_{k}\right| _{L^{2}}=1$ give
		\[\left|\varphi_{k}\right| _{L^{2}}-\left|\varphi_{k}- v^{N}_{k}\right| _{L^{2}}\le \left| v^{N}_{k}\right|_{L^{2}}\le\left|\varphi_{k}\right| _{L^{2}}+\left|\varphi_{k}- v^{N}_{k}\right| _{L^{2}}, \]
		and \begin{align}\label{prn52}
			\begin{split}
				\left| \dfrac{\left| (\Pi_{N}\varphi_{k},\varphi^{N}_{k})_{N}\right|}{\left\|\varphi^{N}_{k} \right\|^{2} _{N}}-1\right|\le \left|\varphi_{k}- v^{N}_{k}\right| _{L^{2}} .   \end{split}
		\end{align}
		As it is always possible to choose the eigenfunctions $ \varphi^{N}_{k} $ such that $ (\Pi_{N}\varphi_{k},\varphi^{N}_{k})_{N}\ge 0  $, we obtain
		\begin{align}\label{prn6}
			\begin{split}\left|v^{N}_{k}-\varphi^{N}_{k}\right| _{L^{2}}\le \left|\varphi_{k}- \Pi_{N}\varphi_{k}\right| _{L^{2}}+\left|\Pi_{N}\varphi_{k}- v^{N}_{k}\right| _{L^{2}} .
			\end{split}
		\end{align}
		Finally,  from \eqref{prn5} and \eqref{prn6}
		\begin{align*}
			\left|\varphi^{N}_{k}-\varphi_{k} \right| _{L^{2}}&\le 2 (1+3\sqrt{3}\varrho_{k,N})\left|\varphi_{k}-\Pi_{N}\varphi_{k} \right| _{L^{2}}+ 2 (3\sqrt{2}\varrho_{k,N})\left|\varphi_{k}-I_{N}\varphi_{k} \right| _{L^{2}}\\+&2(\sqrt{6}\varrho_{k,N})\left( \sum_{i\in I_{\Omega},i\ne k}\left| (\varphi_{k},\varphi^{N}_{i})-(\varphi_{k},\varphi^{N}_{i})_{N}\right|\right) .
		\end{align*}
		We recall that, if $v\in H_0^{m} $ for some $ m \ge 1$ and $ v^{N}\in \mathbb{P}^{Di}_{N} $, then there exist  constants $c,c_1>0  $ such that (see \cite{bib5}, [chapter(9)])
		\begin{align}\label{estimation1}
			\left| v-\Pi_{N}v\right| _{L^2}\le c N^{-m}\left| v\right| _{H^m}\ \mbox{and}\ \left| v-I_{N}v\right| _{L^2}\le c_1 N^{1/2-m}\left| v\right| _{H^m}. 
		\end{align}
		On the other hand, there exists a constant $ c_{2}>0$ such that (see \cite{bib9}[estimation (3.22)])
		\begin{align}\label{estimation}
			|(v, v^{N})_{N}-(v, v^{N})|\le c_{2}N^{-m}\left| v\right|_{H^{m}}\left|  v^{N}\right|_{L^{2}} .	
		\end{align}
		From the classical projection results for spectral methods in \eqref{estimation1} and \eqref{estimation} when $ m=2 $ and the fact that  $\left|\varphi_{k} \right|_{H^{2}}\le c \lambda_{k} \leq c N^{1/4}  $ (since by hypotheses $k\leq cN^{1/8}$) we deduce that  there exists a constant $ C>0 $ such that 	
		\begin{align}\label{prn9}
			\begin{split}
				\left|\varphi^{N}_{k}-\varphi_{k} \right| _{L^{2}}\le C N^{-3/4},\ \ k\le r(N).
			\end{split}
		\end{align}
		
		\textbf{Step 3: Estimate \eqref{22f}.} From \eqref{an12}, \eqref{an1} and the norm equivalence in \eqref{eq10} we  can write
		\begin{align*}
			a(\varphi^{N}_{k}-\varphi_{k},\varphi^{N}_{k}-\varphi_{k})\le 3 \lambda^{N}_{k}+\lambda_{k}-2\lambda_{k}(\varphi_{k},\varphi^{N}_{k}). 
		\end{align*}
		We also have
		\begin{align*}
			\left|\varphi^{N}_{k}-\varphi_{k} \right|^{2} _{L^{2}}
			&= 2(1-(\varphi_{k},\varphi^{N}_{k})), 
		\end{align*} and then
		\begin{align*}
			a(\varphi^{N}_{k}-\varphi_{k},\varphi^{N}_{k}-\varphi_{k})
			&\le 3\left( \lambda^{N}_{k}-\lambda_{K}+\lambda_{K}\left|\varphi^{N}_{k}-\varphi_{k} \right|^{2} _{L^{2}}\right). 
		\end{align*}
		From the coercivity of the bi-linear form $ a $ in \eqref{an12}, \eqref{2f} and \eqref{eigcon}, the fact that  $\lambda_{k}\le M_\alpha N^{\frac{1}{4}} $ we can deduce, there exists a constant $ C_1>0 $ such that 
		\begin{align*}
			\left|\varphi^{N}_{k}-\varphi_{k} \right|^{2} _{H_{0}^{1}}\le C_1\left(p(\alpha)^{N^{2/3}}+N^{-1} \right).
		\end{align*}
		
		\textbf{Step 4: Estimate \eqref{3f}}. It is enough to prove the estimate at $x=1$ since the other one is similar.
		First, we observe that we can write the discrete eigenvalue problem \eqref{eigp1} in the following equivalent form:
		\begin{align}\label{eigp3}
			\begin{split}
				\begin{cases}
					\varphi^{N}_{k,xx}+\lambda^{N}_{k}\varphi^{N}_{k}=\varphi^{N}_{k,xx}(-1)\Psi_{0}^N+\varphi^{N}_{k,xx}(1)\Psi_{N}^N&\ \mbox{in} \ x\in \Omega \\
					\varphi^{N}_{k}(1)=\varphi^{N}_{k}(-1)=0,
				\end{cases}
			\end{split}
		\end{align}
		where $\Psi_{0}^N\in \mathbb{P}_{N}(\Omega)$ (resp. $\Psi_{N}^N$) is the Lagrangian polynomial which is $1$ at $x=-1$ (resp. $x=1$) and $0$ at the rest of quadrature points in $C^{\Omega}=C^{\Omega,N}$. At this point we make explicit the dependence on $N$ of the set of quadrature points by writing $C^{\Omega,N}$ since we consider different sets below.  
		
		It is easy to see that the eigenfunctions associated to \eqref{eigp3} are either even or odd. We focus on the case of even eigenfunctions since the other one is similar. In this case $\varphi^{N}_{k,xx}(-1)=\varphi^{N}_{k,xx}(1)$. Multiplying \eqref{eigp3} by $ \Psi_{N-1}^{N-1} \in \mathbb{P}_{N-1}(\Omega) $  the Lagrangian polynomial which is 1 at  $ x=1 $ and 0 at all other collocation points in $C^{\Omega,N-1}$, one has
		\begin{align}\label{new way}
			\varphi^{N}_{k,xx}(1)\omega_{N-1}^{N-1}+\lambda^{N}_{k}\int_{-1}^{1}\varphi^{N}_{k}\Psi_{N-1}^{N-1}dx=\varphi^{N}_{k,xx}(1)\omega_{N}^N.
		\end{align} 
		Note that in the first term on the left hand side we have used the   quadrature formula with nodes in $C^{\Omega,N-1}$, which is exact for polynomials of degree $2(N-1)-1$. In fact, the term inside the integral is a polynomial of degree $ 2N-3 $ and by hypotheses $ \Psi^{N-1}_{N-1}  $ is $ 1 $ at $ x=1$ and $ 0 $ at the other quadrature nodes. On the right hand side we have used the quadrature formula with nodes in $ C^{\Omega,N} $  which is also exact since the integrated is a polynomial of degree $ 2N-1 $.  Therefore
		\begin{align} \label{new way1}
			\begin{split}
				\left| \sqrt{\omega^{N}_{N}}\varphi^{N}_{k,xx}(1)\right| &=\left| \dfrac{\sqrt{\omega^{N}_{N}}\lambda^{N}_{k}\int_{-1}^{1}\varphi^{N}_{k}\Psi^{N-1}_{N-1}dx}{(w^{N-1}_{N-1}-w^{N}_{N})}\right|\\
				&=\left| \dfrac{\sqrt{\omega^{N}_{N}}\lambda^{N}_{k}\int_{-1}^{1}(\varphi^{N}_{k}-P_{N-2}\varphi^{N}_{k})\Psi^{N-1}_{N-1}dx}{(\omega^{N-1}_{N-1}-\omega^{N}_{N})}\right|,
			\end{split}
		\end{align}
		where $ P_{N-2}\varphi^{N}_{k} $ is  the orthogonal projection of $ \varphi^{N}_{k} $ in $ \mathbb{P}^{Di}_{N-2}(\Omega) $ with respect to the $L^2-$scalar product. The last equality comes from the fact that $ \int_{-1}^{1} P_{N-2}\varphi^{N}_{k}\Psi^{N-1}_{N-1}dx=0$ by the  quadrature formula with nodes in $C^{\Omega,N-1}$. 
		
		Now using Cauchy schwarz inequality in \eqref{new way1} and taking into account the norm equivalent in \eqref{eq10}, as  $ \Psi^{N-1}_{N-1}\in \mathbb{P}_{N-1}(\Omega) $,$ \left|\Psi^{N-1}_{N-1} \right| _{L^{2}}\le \left\|\Psi^{N-1}_{N-1} \right\| _{N-1} =\sqrt{\omega^{N-1}_{N-1}}$ and   $\omega^{N-1}_{N-1}=\dfrac{2}{(N-1)N},\omega^{N}_{N}=\dfrac{2}{N(N+1)},  $ from \eqref{estimation1} and the fact that $ \lambda^{N}_{k}\le  M_\alpha N^{\frac{1}{4}} $  we obtain, there exists a constant $ c_3>0 $ such that   
		\begin{align}\label{prop2}
			\begin{split}
				\left| \sqrt{\omega^{N}_{N}}{\varphi^{N}_{k,xx}}(1)\right|
				&\le\dfrac{\sqrt{\omega^{N}_{N}}\sqrt{\omega^{N-1}_{N-1}}\lambda^{N}_{k}  \left| \varphi^{N}_{k}-P_{N-2}\varphi^{N}_{k}\right| _{L^{2}}  }{\left| (\omega^{N-1}_{N-1}-\omega^{N}_{N})\right| }\\
				&\le c_3N^{5/4}(N-2)^{-m}  \left| \varphi^{N}_{k}\right| _{H^{m}}.  
			\end{split}
		\end{align}
		To estimate this last term we use the following result,	
		\begin{lemma}\label{lemeigen1-d}
			Assume that $ m=2 $, then there exists a constant $ M_2>0 $ such that
			\begin{align}\label{H2}
				\left| \varphi^{N}_{k}\right|^{2} _{H^{2}} \le M_2 |\lambda^{N}_{k}|^2(1+w^{N}_{N}\left|{\varphi^{N}_{k,xx}}(1)\right|^{2}).
			\end{align}
			
		\end{lemma}
		From Lemma \ref{lemeigen1-d}, the fact that $ \lambda^{N}_{k}\le  M_\alpha N^{\frac{1}{4}} $ and \eqref{prop2} we easily deduce estimate \eqref{3f}.		
		
		We now prove Lemma \ref{lemeigen1-d}.
		\begin{proof}[\textbf{Proof.}] 
			Multiply \eqref{eigp3} by $ \varphi^{N}_{k} $ and integrating by parts one has
			\begin{equation*}
				\int_{-1}^{1}\left| \varphi^{N}_{k,x}\right| ^{2} dx=\lambda^{N}_{k}\int_{-1}^{1} \left| \varphi^{N}_{k}\right| ^{2} dx-2\varphi^{N}_{k,xx}(1)\int_{-1}^{1}\Psi^{N}_{N} \varphi^{N}_{k} dx.
			\end{equation*} 
			The last equality comes from the fact that $ \int_{-1}^{1}\Psi^{N}_{0} \varphi^{N}_{k} dx=\int_{-1}^{1}\Psi^{N}_{N} \varphi^{N}_{k} dx $. Now multiplying and dividing the second term on the right hand side by $ \sqrt{\omega^{N}_N} $, first using young's inequality then Cauchy schwarz inequality we obtain
			\begin{align}\label{H2x}
				\left| \varphi^{N}_{k,x}\right|_{L^2} ^{2} \le\left( \lambda^{N}_{k}+\frac{\left|\Psi^{N}_{N} \right|^{2} _{L^{2}}}{\omega^{N}_{N}}\right) \left| \varphi^{N}_{k}\right|^{2}_{L^{2}} +\omega^{N}_{N}\left|\varphi^{N}_{k,xx}(1) \right| ^{2}.
			\end{align}
			On the other hand  multiply \eqref{eigp3} by $ \varphi^{N}_{k,xx} $ and integrating by parts one obtains
			\begin{equation}\label{H2xx}
				\left| \varphi^{N}_{k,xx}\right|_{L^2} ^{2} =\lambda^{N}_{k} \left|\varphi^{N}_{k,x}\right|_{L^2} ^{2} +2\omega^{N}_{N}\left| \varphi^{N}_{k,xx}(1)\right| ^{2}.
			\end{equation}
			Here on the right hand side we have used the quadrature formula with nodes in $ C^N $  since the integrated is a polynomial of degree $ 2N-2 $ and by hypotheses $\Psi_{0}^N$ (resp. $\Psi_{N}^N$) is $1$ at $x=-1$ (resp. $x=1$) and $0$ at the rest of quadrature points in $C^N$ and the fact that $ \varphi^{N}_{k} $ is even. Finally from \eqref{H2x},\eqref{H2xx} and the normalization of the eigenfunctions $\left| \varphi^N_{k}\right| _{L^{2}}=1$ we easily obtain \eqref{H2}.  
		\end{proof}
		\textbf{Step 5: Estimate \eqref{5f}}. 
		Multiplying the equation of $ \varphi^{N}_{k}  $ in \eqref{eigp1} by $ \dfrac{1+x_{i}}{2}\omega_{i}^N $ and adding in $ i\in I $ one obtains,
		\begin{align}\label{def1}
			\begin{split}
				0&=\sum_{i\in I} \varphi^{N}_{k,xx}(x_{i})\dfrac{1+x_{i}}{2} \omega^{N}_{i}+\lambda^{N}_{k}\sum_{i\in I}\varphi_{k}^N(x_{i})\dfrac{1+x_{i}}{2} \omega^{N}_{i}-\varphi^{N}_{k,xx}(1)\omega^{N}_{N}\\
				&=\varphi^{N}_{k,x}(1)+\lambda^{N}_{k}\int_{-1}^{1}\varphi^{N}_{k}\dfrac{1+x}{2}dx -\varphi^{N}_{k,xx}(1)\omega^{N}_{N}.
			\end{split}
		\end{align}
		Note that in the first and second terms on the right hand side we have used the quadrature formula. In particular, this allowed us to integrate by parts the first term.
		
		Now multiplying the equation in \eqref{eigp2} by $ \dfrac{1+x}{2} $ and integrating by parts one has,
		\begin{align}\label{def2}
			\begin{split}
				\varphi_{k,x}(1)+\lambda_{K}\int_{-1}^{1}\varphi_{k}\dfrac{1+x}{2}dx=0.
			\end{split}
		\end{align}
		Combining  \eqref{def1}, \eqref{def2} we obtain
		$$
		\varphi^{N}_{k,x}(1)-\varphi_{k,x}(1)=\lambda_{k}\int_{-1}^{1}\varphi_{k}\dfrac{1+x}{2}dx - \lambda^{N}_{k}\int_{-1}^{1}\varphi^{N}_{k}\dfrac{1+x}{2}dx + \varphi^{N}_{k,xx}(1)w_{N}^N.
		$$
		Here, the right hand side is easily estimated using \eqref{2f},\eqref{eigcon} and  the normalization of the eigenfunctions $\left| \varphi^N_{k}\right| _{L^{2}}=1$. This gives \eqref{5f} and concludes the proof of the theorem. 
	\end{proof}
	
	We now move to give a proof of Lemma \ref{lemcon1-d} and Lemma \ref{dataconv}. 
	
	\textbf{Proof of Lemma \ref{lemcon1-d}}.
	Define $\mu_k=sign(k) \sqrt{\lambda_{|k|}}$  and $\Phi_k=(\varphi_{|k|} / (i\mu_k ),\varphi_{|k|})/\sqrt{2}$ for $k\in \mathbb{Z}^*=\mathbb{Z}\backslash \{ 0 \}$. Note that $\{\Phi_k\}_{k\in \mathbb{Z}}$ is an orthonormal basis in 	$H_{0}^{1}\times L^{2}$.
	Thus, given $ (\phi^{0},\phi^{1})\in H_{0}^{1}\times L^{2} $ we can write
	\begin{equation}
		(\phi^{0},\phi^{1})=\sum_{k\in \mathbb{Z}^*} a_k \Phi_k, \quad | (\phi^{0},\phi^{1})|_{H^1_0\times L^2}^2 = \sum_{k\in \mathbb{Z}^*}|a_k|^2<\infty,
	\end{equation}
	for some Fourier coefficients $a_k\in \mathbb{C}$. 
	Analogously, we define $\mu^{N}_{k}=sign(k) \sqrt{\lambda^{N}_{|k|}}$  and $\Phi_k^N=(\varphi_{|k|}^N / (i\mu^{N}_{k} ),\varphi_{|k|}^N)/\sqrt{2}$ for $|k|\leq N, \; k\neq 0$. Again $\{\Phi_k^N\}_{|k|\leq N}$ is an orthonormal basis of $H_0^1 \times \mathbb{P}_N$ where the scalar product in $\mathbb{P}_N $ is the discrete inner product $ (\cdot,\cdot)_N $. Let us consider 
	\begin{align}\label{pfr1}
		(\phi^{0,N},\phi^{1,N})= \sum_{|k|\leq r(N)} a_k \Phi_k^N. 
	\end{align}
	From the convergence results in Theorem \ref{theigen1-d} we have 
	\begin{eqnarray} \nonumber
		&& |(\phi^{0},\phi^{1})-(\phi^{0,N},\phi^{1,N})|_{H^1_0\times L^2}^2 \leq \sup_{|k|\leq r(N)}\left(| \Phi_k-\Phi_k^N |_{H_0^1\times L^2}^2 \right)\sum_{|k|\leq r(N)} |a_k|^2  \\ \label{eq_app1}
		&& \quad +\sum_{|k|> r(N)} |a_k|^2 \to 0, \quad \mbox{as $N\to \infty$} .
	\end{eqnarray}
	This concludes the proof of \eqref{eq28}. 
	Moreover, the solution of the continuous wave equation \eqref{eq41-d} is given by 
	\begin{equation} \label{eq_n1}
		(\phi(t,x),\phi_t(t,x))= \sum_{k\in \mathbb{Z}^*} a_k e^{i\mu_k t} \Phi_k,
	\end{equation}
	while the one associated to \eqref{eq1111-d} with initial data $(\phi^{0,N},\phi^{1,N}) $ is given by
	\begin{equation} \label{eq_n2}
		(\phi^N(t,x),\phi_t^N(t,x))= \sum_{|k|\leq r(N)} a_k e^{i\mu^{N}_{k} t} \Phi_k^N.
	\end{equation}
	Again, the uniform convergence of the low frequencies stated  in Theorem \ref{theigen1-d} allows us to obtain \eqref{eqa31}-\eqref{eqa323}. 
	
	\textbf{Proof of Lemma \ref{dataconv}}.  We follow the idea in the proof of Lemma \ref{lemcon1-d}. 
	Let us define  $\hat \Phi_k=(i\mu_k )\Phi_k$ for $k\in \mathbb{Z}^*=\mathbb{Z}\backslash \{ 0 \}$ where $\Phi_k$ where introduced at the beginning of Lemma \ref{lemcon1-d}. Note that $\{\hat \Phi_k\}_{k\in \mathbb{Z}}$ is now an orthonormal basis in 	$ L^{2}\times H^{-1}$.
	Thus, given $ (u^{0},u^{1})\in L^{2}\times H^{-1} $ we can write
	\begin{equation}\label{pfr22}
		(u^{0},u^{1})=\sum_{k\in \mathbb{Z}^*} \hat b_k \hat\Phi_k, \quad | (u^{0},u^{1})|_{L^{2}\times H^{-1}}^2 = \sum_{k\in \mathbb{Z}^*}|\hat b_k|^2<\infty,
	\end{equation}
	for some Fourier coefficients $\hat b_k\in \mathbb{C}$. Analogously, we define $\hat \Phi^{N}_k=i\mu^{N}_{k}\Phi^{N}_k$ for $|k|\leq N, \; k\neq 0$. Let us consider 
	\begin{align}\label{pfr2}
		(u^{0,N},u^{1,N})= \sum_{|k|\leq r(N)} \hat b_k \hat \Phi^{N}_k. 
	\end{align}
	Note that if $(u^{0}, u^{1})$ are continuous functions, the sequence that we choose $(u^{0,N}, u^{1,N})$ are the polynomial which coincides with the value of $(u^{0}, u^{1})$ at the collocation points. 
	
	Arguing as in \eqref{eq_app1} the convergence result in \eqref{ini} can be reduced to prove 
	$$
	\sup_{|k|\leq r(N)}\left(|  \hat \Phi_k- \hat \Phi^{N}_k |_{L^2\times H^{-1}}^2 \right) \to 0, \quad \mbox{ as $N\to  \infty$}.
	$$
	Note that,
	$$
	|  \hat \Phi_k- \hat \Phi^{N}_k |_{L^2\times H^{-1}}^2=|\varphi_{|k|}-\varphi_{|k|}^N|_{L^2}^2 + |i\mu_k\varphi_{|k|}-i\mu_k^N\varphi_{|k|}^N|_{H^{-1}}^2 .
	$$
	The first term here can be estimated uniformly for $|k|\leq r(N)$ by Theorem \ref{theigen1-d} and converges to zero as $N\to \infty$. Concerning the second term we use the fact that the eigenfunctions $\varphi_{k}$ (resp. $\varphi_{|k|}^N$) satisfy \eqref{eigp1} (resp. \eqref{eigp3}) together with the isometry of the Laplacian between $H^1_0$ and $H^{-1}$. Therefore,
	\begin{eqnarray}
		&& |i\mu_k\varphi_{|k|}-i\mu_k^N\varphi_{|k|}^N|_{H^{-1}}^2 =\left| \frac{\varphi_{|k|,xx}}{\mu_k}-\frac{\varphi_{|k|,xx}^N-\varphi_{|k|,xx}^N(-1)\Psi_0^N-\varphi_{|k|,xx}^N(1)\Psi_N^N}{\mu_k^N}  \right|_{H^{-1}}^2 \\
		&& \leq \left| \frac{\varphi_{|k|}}{\mu_k}-\frac{\varphi_{|k|}^N}{\mu_k^N}\right|_{H^{1}_0}^2 + \frac{|\varphi_{|k|,xx}^N(-1)|^2}{\lambda_k^N}|\Psi_0^N|_{H^{-1}}^2 + \frac{|\varphi_{|k|,xx}^N(1)|^2}{\lambda_k^N}|\Psi_N^N|_{H^{-1}}^2,
	\end{eqnarray}
	that converges uniformly to zero for $|k|\leq r(N)$ as a consequence of Theorem \ref{theigen1-d} and the uniform bound of $|\Psi_0^N|_{L^2}$ and $|\Psi_0^N|_{L^2}$. 
	
	We now prove \eqref{2cond}, Observe that, 
	$\{\Phi_k^N\}_{|k|\leq N} $ is orthonormal in $\mathbb{P}^N \times \mathbb{P}^N$ with the scalar product,
	$$
	\left( (v^{0,N},v^{1,N}), (w^{0,N},w^{1,N}) \right)_{N}^*= (v^{0,N}_x,w^{0,N}_x)_N +(v^{1,N},v^{1,N})_N, 
	$$
	whose associated norm is equivalent to the usual norm in $ H^1_0\times L^2$. 
	
	Therefore, if write any $(\phi^{0,N},\phi^{1,N})$ as $\sum_{|k|\leq N} a_k^N \Phi_k^N $ and by the orthogonality of the eigenfunctions $\varphi_k^N$ with respect to the discrete scalar product $(\cdot,\cdot)_N$ and the duality product  \eqref{dualityN} we have
	\begin{align*}
		& \left| \big<(\varphi^{0,N},\varphi^{1,N}),(u^{0,N},u^{1,N})\big>_{N} \right| = 
		\left| \big< \sum_{|k|\leq r(N)} \hat b_k \hat \Phi_k^N,  \sum_{|k|\leq N} a_k^N \Phi_k^N   \big>_{N}  \right| \\
		& \le \left| \sum_{|k|\leq r(N)}  \hat b_k a_k^N  \right| \leq 
		\left( \sum_{|k|\leq r(N)} |\hat b_k|^2 \right)^{1/2} \left( \sum_{|k|\leq r(N)} |a_k^N|^2 \right)^{1/2} \\
		& \leq |(u^0,u^1)|_{L^2\times H^{-1}} \| (\varphi^{0,N}_x,\varphi^{1,N}) \|_{N\times N}. 
	\end{align*}
	
	Finally, we prove \eqref{2cond_b}. We assume now that $(\phi^{0,N},\phi^{1,N})\to (\phi^{0},\phi^{1})$ in $H^1_0\times L^2$ that we write as $(\phi^{0},\phi^{1})=\sum_{k\in \mathbb{Z}^*} a_k \Phi_k$. 
	We have,
	\begin{align*}
		& \big<(\varphi^{0,N},\varphi^{1,N}),(u^{0,N},u^{1,N})\big>_{N}= \big< \sum_{|k|\leq r(N)} \hat b_k \hat \Phi_k^N,  \sum_{|k|\leq N} a_k^N \Phi_k^N   \big>_{N},\\
		&\big<(\varphi^{0},\varphi^{1}),(u^{0},u^{1})\big>= \big< \sum_{k\in \mathbb{Z}^*} \hat b_k \hat \Phi_k,  \sum_{k\in \mathbb{Z}^*} a_k \Phi_k   \big>.
	\end{align*}
	The convergence results in Theorem \ref{theigen1-d} allow to prove the estimate,
	$$
	|a_k^N - a_k| \leq CN^{-1/4}, \quad |k|\leq r(N),
	$$
	and, using the strong convergence $(\phi^{0,N},\phi^{1,N})\to (\phi^{0},\phi^{1})$,
	$$
	\big< \sum_{|k|\leq r(N)} \hat b_k \hat \Phi_k^N,  \sum_{|k|\leq N} a_k^N \Phi_k^N   \big>_{N} \to  \big< \sum_{k\in \mathbb{Z}^*} \hat b_k \hat \Phi_k,  \sum_{k\in \mathbb{Z}^*} a_k \Phi_k   \big>. 
	$$ 	
	This concludes the proof of \eqref{2cond_b}.
	
	\section*{Acknowledgements}
	The authors were supported by grant PID2021-124195NB-C31 from the Spanish government (MICINN). The first author also thanks the support of the Algerian government for the scholarship offered to finance the PhD at the Polytechnic University of Madrid.

	\bibliography{sn-bibliography}

\end{document}